\newcommand{\arxiv}[1]{\href{http://www.arXiv.org/abs/#1}{arXiv:#1}}
\newcommand {\eproof}{\space{\ \vbox{\hrule\hbox{\vrule height1.3ex\hskip0.8ex\vrule}\hrule}}\vskip 0.3cm \par}
\newcommand{\mysup}{\mathop{\text{\Large$\vee$}}}
\newcommand{\myinf}{\mathop{\text{\Large$\wedge$}}}
\newcommand{\cV}{{\mathcal V}}
\newcommand{\RR}{\mathbb{R}_{\max}}
\newcommand{\Rmax}{\mathbb{R}_{\max}}
\newcommand{\RRbar}{\overline{\mathbb{R}}_{\max}}
\newcommand{\RRm}{\RR^m}
\newcommand{\RRn}{\RR^n}
\newcommand{\RRmm}{\RR^{m\times n}}
\newcommand{\R}{\mathbb{R}}
\newcommand{\spec}{\operatorname{spec}}
\newcommand{\diez}{\sharp}
\newcommand{\Rp}{\R_+}
\newcommand{\supp}{\operatorname{supp}}
\newcommand{\spann}{\operatorname{span}}
\newcommand{\bzero}{-\infty}
\newcommand{\specf}{s}
\newcommand{\hilbdist}{d_{\operatorname{H}}}
\newcommand{\chebdist}{d_{\infty}}
\newcommand{\rad}{r}
\newcommand{\MPG}{\operatorname{MPG}}
\newcommand{\mymult}{\otimes}
\newcommand\prslash{\ifx\@currsize\normalsize
{\mathchoice
{\mbox{\raisebox{0.2ex}{$\scriptstyle\circ$}\kern-0.96ex$/$}}
{\mbox{\raisebox{0.2ex}{$\scriptstyle\circ$}\kern-0.96ex$/$}}%
{\mbox{\raisebox{0.14ex}{$\scriptscriptstyle\circ$}\kern-0.70ex%
${\scriptstyle/}$}}%
{\mbox{\raisebox{0.14ex}{$\scriptscriptstyle\circ$}\kern-0.70ex%
${\scriptstyle/}$}}}%
\else\ifx\@currsize\large\mbox{\raisebox{0.2ex}{$\scriptstyle\circ$}\kern-0.93ex$/$}
\else\ifx\@currsize\small\mbox{\raisebox{0.2ex}{$\scriptstyle\circ$}\kern-0.97ex$/$}
\else\mbox{\raisebox{0.2ex}{$\scriptstyle\circ$}\kern-0.96ex$/$} \fi\fi\fi}
\newcommand{\mydiv}{\prslash}
\newcommand{\mytimes}{}
\newcommand{\maxlength}{\kappa}
\newtheorem{theorem}{Theorem}
\newtheorem{lemma}{Lemma}
\newtheorem{definition}{Definition}
\newtheorem{proposition}{Proposition}
\newtheorem{corollary}{Corollary}
\theoremstyle{remark}
\newtheorem{example}{Example}
\newtheorem{remark}{Remark}
\begin{document}
\title{The level set method for the two-sided max-plus eigenproblem}
\thanks{The first author was partially supported by the Arpege  programme of the
French National Agency of Research (ANR), project ``ASOPT'', number ANR-08-SEGI-005
and by the Digiteo project DIM08 ``PASO'' number 3389.
The second author was supported by the EPSRC grant RRAH12809 and the RFBR grant 08-01-00601}
\keywords{Max algebra, tropical algebra, matrix pencil, min-max function,
nonlinear Perron-Frobenius theory, generalized eigenproblem, mean payoff game, discrete event systems}
\author{St\'ephane Gaubert}
\address{INRIA and Centre de Math\'ematiques Appliqu\'ees,\'Ecole Polytechnique.
Postal address: CMAP, \'Ecole Polytechnique, 91128 Palaiseau C\'edex, France.}
\email{Stephane.Gaubert@inria.fr}

\author{Serge\u{\i} Sergeev}
\address{University of Birmingham,
School of Mathematics, Watson Building, Edgbaston B15 2TT, UK}
\email{sergeevs@maths.bham.ac.uk}
\date{\today}
\catcode`\@=11
\@namedef{subjclassname@2010}{%
  \textup{2010} Mathematics Subject Classification}
\catcode`\@=12
\subjclass[2010]{15A80, 15A22, 91A46, 93C65}

\begin{abstract}
We consider the max-plus analogue of the eigenproblem for matrix pencils,
$A\mymult x=\lambda\mymult B\mymult x$. We show that the spectrum of $(A,B)$ (i.e., the set of
possible values of $\lambda$), which is a finite union of intervals, can be computed in
pseudo-polynomial number of operations, by a (pseudo-polynomial) number of calls to an oracle that computes the value of a mean payoff game.
The proof relies on the introduction of a spectral function, which we interpret in terms of the
least Chebyshev distance between $A\mymult x$ and $\lambda\mymult B\mymult x$. The spectrum is obtained
as the zero level set of this function.

\end{abstract}

\maketitle

\section{Introduction}

\subsection{Motivations and general information}

Max-plus algebra is the analogue of linear algebra
developed over the max-plus semiring which is
the set $\RR=\R\cup\{-\infty\}$ equipped with the operations
of ``addition'' $a\oplus b:=a\vee b=\max(a,b)$ and
``multiplication'' $a\mymult b:=a+b$.
The zero of this semiring is $-\infty$, and the unit
of this semiring is $0$. Note that $a^{-1}$ in max-plus is the same as $-a$
in the conventional notation.
The operations of the
semiring are extended to matrices and vectors over
$\RR$. That is if $A=(a_{ij})$,
$B=(b_{ij})$ and $C=(c_{ij})$ are matrices of compatible sizes with
entries from $\RR$, 
we write $C=A\vee B$ if $c_{ij}=a_{ij}\vee b_{ij}$ for all $i,j$ and $%
C=A\mymult B$ if $c_{ij}=\mysup_k (a_{ik}+b_{kj})$ for all $i,j$.




We investigate the two-sided eigenproblem in max-plus algebra: for
two matrices $A,B\in\RR^{m\times n}$, find scalars $\lambda\in\RR$
called {\em eigenvalues}
and vectors $x\in\RRn$ called
{\em eigenvectors}, with at least one component not equal to 
$-\infty$, such that
\begin{equation}
\label{twosided-eig}
A\mymult x=\lambda\mymult B\mymult x,
\end{equation}
where the operations have max-plus algebraic sense.
In the conventional notation this reads
\begin{equation}
\label{twosided-eig-conventional}
\max_{j=1}^n (a_{ij}+x_j)=\lambda+\max_{j=1}^n (b_{ij}+x_j),\quad\text{for}\; i=1,\ldots,m.
\end{equation}
The set of eigenvalues will be called the {\em spectrum}
of $(A,B)$ and denoted by $\spec(A,B)$.

When $B$ is the max-plus identity matrix $I$ (all diagonal entries equal $0$ and
all off-diagonal entries equal $-\infty$), problem \eqref{twosided-eig} is the max-plus
spectral problem. The latter spectral problem, as well as
its continuous extension for max-plus linear operators,
is of fundamental importance for
a wide class of problems in discrete event systems theory,
dynamic programming, optimal control and mathematical physics~\cite{BCOQ,HOW:05,KM:97}.

Problem~\eqref{twosided-eig} is related
to the Perron-Frobenius theory for the two-sided eigenproblem
in the conventional linear algebra,
as studied in~\cite{McD+98,MNV-08}. When both
matrices are nonnegative and depend on a large parameter,
it can be shown following
the lines of \cite[Theorem 1]{ABG-98}
that the asymptotics of an eigenvalue with nonnegative
eigenvector is controlled by an eigenvalue
of \eqref{twosided-eig}.
This argument calls for the development of two-sided
analogue of the tropical eigenvalue perturbation theory
presented in \cite{ABG-04,ABG-06pencils}.

A specific motivation to study the two-sided max-plus eigenproblem
arises from discrete event systems. In particular, systems of the
form $A\mymult x=B\mymult x$ or $A\mymult x\leq B\mymult x$ appear in scheduling.
Indeed, when $\lambda=0$, the system of constraints~\eqref{twosided-eig-conventional} can be interpreted in terms of {\em rendez-vous}. Here, $x_j$
represents the starting time of a task $j$ (for instance,
the availability of a part in a manufacturing system). The
expression $\max_{j=1}^n (a_{ij}+x_j)$ represents the earliest
completion time of a task which needs at least $a_{ij}$ time units
to be completed after task $j$ started. Thus, the system 
$A\mymult x=B\mymult x$ requires to find starting times such that two different
sets of tasks are completed at the earliest exactly at the same
times. In many situations, such systems cannot be solved exactly, and a natural 
idea is to calculate the minimal
Chebyshev distance between $A\otimes x$ and $B\otimes x$. Theorem 4 below
determines this minimal distance.
It may be also of interest to solve perturbed problems like $A\mymult x=\lambda\mymult
B\mymult x$, as in~\eqref{twosided-eig}. Such problems express {\em no wait}
type constraints. Indeed, $y:=A\mymult x$ and $z:=B\mymult x$ may be thought
of as the outputs of two different systems $A$ and $B$, with a common
input $x$. The time offsets between output events are represented
by the differences $y_i-y_j$, for all $i,j$, where the difference
is understood in the usual algebra
(these quantities belong to the ``second order'' max-plus theory, see
e.g.~\cite{maxplus91b}). No wait constraint may require that $y_i-y_j$
take prescribed values, for some pair $(i,j)$. The condition that
$y=\lambda\mymult z$, i.e., $y=\lambda+z$, for some $\lambda\in\mathbb{R}$,
means precisely that the time offsets
are the same for the two outputs $y$ and $z$. Hence, an input $x$ solving
$A\mymult x=\lambda\mymult B\mymult x$ has the property of making $A$ and $B$
indistinguishable from the point of view of no wait output constraints. An example
of such a situation is demonstrated on Figure~\ref{fig-new} in Subsection~\ref{ss:bounds}.

Problems of a related nature, regarding the time separation
between events, arose for instance in the work of Burns,
Hulgaard,  Amon, and Borriello~\cite{burns95}, following the work of
Burns on the checking of asynchronous digital
circuits~\cite{Bur:91}. Moreover, systems of the form $A\mymult x\leq B\mymult x$ 
represent scheduling problems with
both AND and OR precedence constraints, studied by M\"ohring,
Skutella, and Stork~\cite{mohring}.


Similar motivations led to the study of min-max functions by Olsder~\cite{Ols-91} and Gunawardena~\cite{Gun-94}. Such functions can be
written as finite infima of max-plus linear maps, or finite suprema of min-plus linear maps. They also arise as dynamic
programming operators of zero-sum deterministic games.
In particular, the fixed points and invariant
halflines of min-max functions studied in \cite{CGG-99,DG-06} can be
also used to compute values of zero-sum deterministic games with
mean payoff \cite{DG-06,ZP-96}.  A correspondence between the computation of
the value of mean payoff games and two-sided linear systems in max-plus
algebra has been established in~\cite{AGG-10}; we shall exploit here
the same correspondence, although in different guises.

In max-plus algebra, a special form of min-max functions appears in
Cun\-ing\-ha\-me-Green \cite{CG:79}, under the name of
$AA^*$-products.
The same functions
appear as nonlinear projectors on max-plus cones
playing essential role
in the max-plus analogue of Hahn-Banach
theorem \cite{CGQS-05,LMS-01}.
The compositions of nonlinear projectors are more general min-max
functions, and they appear when one approaches two-sided systems
$A\mymult x=B\mymult  y$ and $A\mymult x=B\mymult x$ \cite{CGB-03}, and
intersections of max-plus cones \cite{GS-08,Ser-09-inLS}. It is immediate
to see that~\eqref{twosided-eig} is a parametric version of $A\mymult
x=B\mymult x$.

In max-plus algebra, partial results for Problem~\eqref{twosided-eig} have been obtained by
Binding and Volkmer \cite{BV-07}, and
Cuninghame-Green and Butkovi\v{c} \cite{But:10,CGB-08}.
In particular, Cuninghame-Green and Butkovi\v{c} \cite{But:10,CGB-08}
give an interval bound on the spectrum of \eqref{twosided-eig}
in the case where the entries of both matrices are real.
Besides that, both papers treat interesting special cases,
for instance when $A$ and $B$ square, or
one of them is a multiple of the other.

The spectrum of~\eqref{twosided-eig} is generally a collection of intervals on the real line. 
By means of projection, this follows from a result of 
De Schutter and De Moor~\cite{SM-96} that solution set to the system of max-plus
(in)equalities is a union of convex polyhedra. Note that the approach of~\cite{SM-96}, related to Develin-Sturmfels cellular decomposition~\cite{DS-04}, can be also used for solving
$A\otimes x=\lambda\otimes B\otimes x$ and more general problems of max-plus linear algebra.

\subsection{Contents of the paper}

In the present paper, we first show that \eqref{twosided-eig} can be
viewed as a fixed-point problem for a family of parametric min-max
functions $h_{\lambda}$. Based on this observation, we introduce a
spectral function $s(\lambda)$ of \eqref{twosided-eig}, defined as
the spectral radius of $h_{\lambda}$. The zero level set of
$s(\lambda)$ is precisely $\spec(A,B)$. More generally, $s(\lambda)$
has a natural geometric sense, being equal to the inverse of the least Chebyshev
distance between $A\mymult x$ and $\lambda\mymult B\mymult x$.

The function $s(\lambda)$ is piecewise-affine and Lipschitz
continuous, and it has an affine asymptotics at large and small
$\lambda$. In an important special case when none of the matrices $A$ and $B$ have 
an identically $-\infty$ column, the asymptotics is just
$\lambda+\alpha_1$ at small $\lambda$, and $-\lambda+\alpha_2$ at
large $\lambda$, in the conventional arithmetics. We also give bounds on the spectrum of two-sided
eigenproblem, which improve and generalize the bound
of Cuninghame-Green and Butkovi\v{c} \cite{But:10,CGB-08}.
In the case when the entries of $A$
and $B$ are integer or $-\infty$, this allows us to show that all
affine pieces of $s(\lambda)$ can be identified in a
pseudopolynomial number of calls to an oracle which identifies $s(\lambda)$ at a given point.
Importantly, $s(\lambda)$ can be interpreted as the greatest value of the associated parametric
mean-payoff game and it can be computed by
the policy iteration algorithm of \cite{CGG-99,DG-06}, as well as by the value iteration
of Zwick and Paterson~\cite{ZP-96} or the subexponential method of Bj\"{o}rklund and Vorobyov~\cite{bjorklund}. This leads to a procedure for computing the whole spectrum of
\eqref{twosided-eig}. To our knowledge, no such general algorithm for
computing the whole spectrum of \eqref{twosided-eig}
was known previously. We also believe that the level set
method used here, relying on the introduction of the spectral
function, is of independent interest and may have other applications.

In some cases the spectral function can be computed analytically. In
particular, we will consider an example of \cite{Ser-note-10}, where
it is shown that any finite system of intervals and points on the
real line can be represented as the spectrum of \eqref{twosided-eig}.

The paper is organized as follows. In the remaining subsection of Introduction
we explain 
the notation used in the
rest of the paper. In Section 2 we consider
two-sided systems $A\mymult x=B\mymult y$ and $A\mymult x=B\mymult x$.  We
relate the systems $A\mymult x=B\mymult x$ to certain min-max functions
and show that the spectral radii of these functions are equal to the inverse of the
least Chebyshev distance between $A\mymult x$ and $B\mymult x$. In
Section 3, we introduce the spectral function of two-sided
eigenproblem as the spectral radius of a natural parametric
extension of the min-max functions studied in Section 2. We give
bounds on the spectrum of two-sided eigenproblem and investigate the
asymptotics of $s(\lambda)$. We reconstruct the spectral function
and hence the whole spectrum in a pseudopolynomial number of calls
to the mean-payoff game oracle. 

\subsection{Notation}

For the sake of simplicity, the sign $\mymult$ will be usually omitted in the remaining part of the paper, or even replaced with $+$ if scalars are involved. In particular we write $Ax$ for $A\mymult x$
and $\lambda+ x$ for $\lambda\mymult x$, where $A\in\RR^{m\times n}$ (a matrix), $x\in\RRn$ (a vector) and $\lambda\in\RR$ (a scalar).
Moreover in the remaining part of the paper
we will prefer 
conventional arithmetic notation: the four arithmetic operations $a+b$, $a-b$, $ab$ and $a/b$ on the
set of real numbers (scalars) will have their usual meaning. However we often use $\vee$ for max and $\wedge$ for $\min$ (also componentwise). The actions of max-plus linear
operators, their min-plus linear residuations and nonlinear projectors onto max-plus cones (defined below in Subsection~\ref{ss:prel}),  which will appear as $Ax$, $A^{\sharp}y$, $P_A y$ etc., should not be confused with any conventional linear operator. The notations like $A^{\sharp}B$ or $P_AP_B$ should be understood as compositions of the corresponding operators rather than any kind of matrix multiplication between them  (in the case of $A^{\sharp}B$ above, $B$ is max-plus linear and $A^{\sharp}$ is min-plus linear). 

Such notation is implied by the methodology of the paper: we consider a problem of max-plus algebra, using nonlinear Perron-Frobenius theory and elementary analysis of piecewise-affine functions, where the max-plus arithmetic notation is not required or is inconvenient. The max-plus matrix product notation $\mymult$, especially when mixed with the dual min-plus $\mymult'$, is also
inconvenient when several compositions of such operators appear in the same formula or equation.

The notation used in our main subsections is not new. Being different from Baccelli et al.~\cite{BCOQ} or Cuninghame-Green~\cite{CG:79}, it basically follows Gaubert and 
Gunawardena~\cite{GG-98}. 

\section{Two-sided systems and min-max functions}

\subsection{Max-plus linear systems and nonlinear projectors}
\label{ss:prel} Consider the $m$-fold Cartesian product $\RRm$
equipped with operations of taking supremum  $u\vee v$ and scalar
``multiplication'' (i.e., addition) $\lambda\mymult v:=\lambda+v$.
This structure is an example of semimodule over the semiring $\RR$
defined in the introduction. The subsets of $\RRm$ closed under
these two operations are its subsemimodules. We will call them {\em
max-plus cones} or just {\em cones}, by abuse of language. Indeed,
there are important analogies and links between max-plus cones and
convex cones \cite{CGQS-05,DS-04,GK-09,Ser-09-inLS}. We also need
the operation of taking infimum which we denote by $\inf$ or $\wedge$.


With a max-plus cone $\cV\subseteq\RRm$ we can associate an operator $P_{\cV}$
defined by its action
\begin{equation}
P_{\cV} z=\mysup\{y\in\cV\mid y\leq z\}.
\end{equation}
Consider the case where $\cV\subseteq\RRm$ is {\em generated} by a set
$S\in\RRm$, which means that it is the set of bounded max-plus linear
combinations
\begin{equation}
v=\mysup_{y\in S} \lambda_y+y.
\end{equation}
In this case
\begin{equation}
\label{projector}
\begin{split}
P_{\cV} z&=\mysup_{y\in S} z\mydiv y + y,\ \text{where}\\
z\mydiv y&=\max\{\gamma\mid\gamma+y\leq z\}=
\myinf_{j\in\supp(y)} (z_j- y_j)=
\myinf_{j=1}^m (z_j-y_j),
\end{split}
\end{equation}
with the convention $(-\infty)+(+\infty)=+\infty$.
Here and in the sequel $\supp(y):=\{i\mid y_i\neq -\infty\}$
denotes the support of $y$.
Note that $z\mydiv y=\infty$ if and only
if $y=\bzero$.

Further we are interested only in the case when $\cV$
is finitely generated. Let $S=\{y^1,\ldots,y^n\}$, and
$T_i$ denote
the set of indices where the minimum in $z\mydiv y^i$ is attained.
The following result is classical.

\begin{proposition}[\cite{BCOQ,But-03,HOW:05}]
\label{p:ax=b}
Let a cone $\cV\subseteq\RRm$ be generated by $y^1,\ldots, y^n$
and let $z\in\RRm$. The following statements are equivalent.
\begin{itemize}
\item[1.] $z\in\cV$.
\item[2.] $P_{\cV} z=z$.
\item[3.] $\bigcup_{i=1}^m T_i=\supp z$.
\end{itemize}
\end{proposition}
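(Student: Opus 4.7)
The plan is to establish the equivalences (1)$\Leftrightarrow$(2) and (2)$\Leftrightarrow$(3) separately, exploiting the explicit formula for $P_{\cV}z$ given in~\eqref{projector}.

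For (1)$\Leftrightarrow$(2): the direction (1)$\Rightarrow$(2) is immediate, since if $z\in\cV$ then $z$ itself is an element of $\{y\in\cV:y\leq z\}$, so $P_{\cV}z\geq z$, while the reverse inequality $P_{\cV}z\leq z$ is built into the definition of $P_{\cV}z$ as a supremum of vectors dominated by $z$. Conversely, formula~\eqref{projector} exhibits $P_{\cV}z$ as the finite max-plus linear combination $\mysup_{i=1}^n (z\mydiv y^i)+y^i$ of the generators $y^1,\dots,y^n$; hence $P_{\cV}z\in\cV$, and the assumption $P_{\cV}z=z$ then yields $z\in\cV$.

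For (2)$\Leftrightarrow$(3): I would examine the identity $P_{\cV}z=z$ coordinate by coordinate. By~\eqref{projector},
\[
(P_{\cV}z)_j \;=\; \mysup_{i=1}^n\bigl((z\mydiv y^i)+y^i_j\bigr)\;\leq\; z_j,
\]
the inequality following for each $i$ from the defining property $z\mydiv y^i=\max\{\gamma:\gamma+y^i\leq z\}$. For a coordinate $j\in\supp(z)$, equality at index $j$ is equivalent to the existence of some $i$ with $z_j-y^i_j=z\mydiv y^i=\myinf_k(z_k-y^i_k)$, which is precisely the condition $j\in T_i$. Hence the restriction of $P_{\cV}z=z$ to $\supp(z)$ is equivalent to $\supp(z)\subseteq\bigcup_i T_i$.

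The main technical point I anticipate is bookkeeping with the $\pm\infty$ conventions introduced after~\eqref{projector}. One has to verify on the one hand that $(P_{\cV}z)_j=-\infty$ automatically for $j\notin\supp(z)$ (so that the components outside the support never obstruct (2)), and on the other hand that in the finitely generated setting no $T_i$ contains an index outside $\supp(z)$, so that (3) can legitimately be phrased as an equality of sets rather than the one-sided inclusion above. Apart from this delicate but routine analysis using $(-\infty)+(+\infty)=+\infty$, the proof reduces to the coordinatewise manipulation of~\eqref{projector} just sketched.
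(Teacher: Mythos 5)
The paper itself gives no proof of Proposition~\ref{p:ax=b} — it is quoted as a classical result from the cited references — so your attempt can only be measured against the standard argument, which it essentially reproduces: 1.$\Leftrightarrow$2.\ via the two inequalities $P_{\cV}z\geq z$ (valid when $z\in\cV$) and $P_{\cV}z\in\cV$ (read off from \eqref{projector}), and then the coordinatewise analysis showing that $P_{\cV}z=z$ holds exactly when every $j\in\supp(z)$ lies in some $T_i$. Up to the point where you obtain the equivalence of 2.\ with the inclusion $\supp(z)\subseteq\bigcup_i T_i$, the reasoning is correct.

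The step you defer as ``routine'' bookkeeping, however, is not routine — it is false as stated that no $T_i$ can contain an index outside $\supp(z)$. Take $m=2$, generators $y^1=(0,-\infty)^T$ and $y^2=(0,0)^T$, and $z=(0,-\infty)^T=y^1$. Then $z\in\cV$ and $P_{\cV}z=z$, yet $z\mydiv y^2=-\infty$ is attained only at $j=2\notin\supp(z)$, so $T_2=\{2\}$ and $\bigcup_i T_i=\{1,2\}\neq\supp(z)$. In general, any generator $y^i$ with $\supp(y^i)\not\subseteq\supp(z)$ has $z\mydiv y^i=-\infty$, attained precisely at indices outside $\supp(z)$; hence the equality of sets in item 3.\ cannot be deduced from 1.\ or 2.\ without an extra hypothesis (e.g.\ $z$ finite, as in the classical formulation, or restricting attention to generators whose support lies in $\supp(z)$). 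So your argument proves 1.$\Leftrightarrow$2.$\Leftrightarrow$\,($\supp(z)\subseteq\bigcup_i T_i$), which is the substance of the classical result, but the final upgrade from this inclusion to the set equality asserted in 3.\ is a genuine gap — one that, in fairness, reflects a looseness in the proposition's own wording (note also that the union there should run over the $n$ generators rather than over $m$).
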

We note that the set covering condition 3. has been generalized to
the case of Galois connections \cite{AGK}.

By this proposition, the operator $P_{\cV}$ is a projector onto
$\cV$. It is an isotonic and $+$-homogeneous operator, meaning
that $z^1\leq z^2$ implies $P_{\cV} z^1\leq P_{\cV} z^2$, and that
$P_{\cV}(\lambda  + z)=\lambda  + P_{\cV} z$.
However, in general it is neither $\vee$- nor $\wedge$-linear.

A finitely generated cone can be described as a max-plus column
span of a matrix $A\in\RR^{m\times n}$:
\begin{equation}
\label{span(A)}
\spann(A):=\{\mysup_{i=1}^n \lambda_i  + A_{\cdot i}\mid \lambda_i\in\RR,\ i=1,\ldots,n\}.
\end{equation}
In this case we denote $P_A:=P_{\spann(A)}$, and there is an explicit
expression for this operator which we recall below.

We denote $\RRbar:=\RR\cup\{+\infty\}$
and view $A\in\RRbar^{m\times n}$ as an operator from $\RRbar^m$ to $\RRbar^n$.
The {\em residuated operator} $A^{\diez}$ from $\RRbar^n$ to $\RRbar^m$ is
defined by
\begin{equation}
\label{adiez}
(A^{\diez} y)_j=y\mydiv A_{\cdot j}=\myinf_{i=1}^m (-a_{ij}+y_i),
\end{equation}
with the convention $(-\infty)+(+\infty)=+\infty$. Note that this operator, also known as
{\em Cuninghame-Green inverse}, sends
$\RRn$ to $\RRm$ whenever $A$ does not have columns equal to $-\infty$. The
term ``residuated'' refers to the property
\begin{equation}
\label{res-prop}
Ax\leq y\Leftrightarrow x\leq A^{\diez} y,
\end{equation}
where $\leq$ is the partial order on $\RRm$ or $\RRn$. Using \eqref{projector}
we obtain
\begin{equation}
\label{CG-proj}
P_A(z)=\mysup_{i=1}^n (z\mydiv A_{\cdot i})  + A_{\cdot i}=
AA^{\diez}z.
\end{equation}
In this form \eqref{CG-proj}, the nonlinear
projectors were studied by
Cuninghame-Green \cite{CG:79} (as $AA^*$-products).

Finitely generated cones are closed in the topology induced by the
metric
\begin{equation}
\label{dist-exp}
d(x,y)=\max\limits_i |e^{x_i}-e^{y_i}|,
\end{equation}
which coincides with Birkhoff's order topology. It is known
\cite[Theorem 3.11]{CGQS-05} that the
projectors onto such cones are continuous.

The intersection of two finitely generated cones can be
expressed in terms
of two-sided max-plus
linear systems with separated variables
$Ax=By$,
by the following proposition.

\begin{proposition}
\label{p:2sided-sep}
Let $A\in\RR^{m\times n_1}$ and $B\in\RR^{m\times n_2}$.
\begin{itemize}
\item [1.] If $(x,y)$ satisfies $Ax=By\neq\bzero$ then $z=Ax=By$
belongs to $\spann(A)\cap\spann(B)$. Equivalently,
$P_AP_B z=P_BP_A z=z$.
\item[2.] If $P_AP_B z=z\neq\bzero$ then there exist $x$ and $y$
such that $Ax=By=z$.
\end{itemize}
\end{proposition}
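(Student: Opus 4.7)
The plan is to chain together three ingredients: the defining identity that $\spann(A) = \{Ax : x \in \RR^{n_1}\}$, the characterization of membership in a cone via its projector given by Proposition \ref{p:ax=b}, and the basic inequality $P_{\cV} w \leq w$, which follows directly from the definition $P_{\cV} w = \mysup\{u \in \cV : u \leq w\}$ together with the fact that $\cV$ is closed under suprema.

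For the first statement, I would observe that $Ax$ is by construction a max-plus linear combination of the columns of $A$, so $z = Ax \in \spann(A)$; symmetrically $z = By \in \spann(B)$. Hence $z \in \spann(A) \cap \spann(B)$, and Proposition \ref{p:ax=b} applied to each cone yields $P_A z = z$ and $P_B z = z$. Composing immediately gives $P_A P_B z = P_B P_A z = z$.

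For the second, more substantive, direction I would exploit the deflationary property $P_{\cV} w \leq w$. Setting $w := P_B z$ and combining the hypothesis $P_A P_B z = z$ with $P_A w \leq w$ and $P_B z \leq z$ yields
\begin{equation*}
z = P_A(P_B z) \leq P_B z \leq z,
\end{equation*}
forcing both inequalities to be equalities. In particular $P_B z = z$, so $z \in \spann(B)$ by Proposition \ref{p:ax=b}, while $z = P_A(P_B z)$ automatically lies in the range of $P_A$, hence in $\spann(A)$. By the definition \eqref{span(A)} of the spans, there exist $x \in \RR^{n_1}$ and $y \in \RR^{n_2}$ with $Ax = z = By$, as required.

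The argument is short, and I do not anticipate a real obstacle. The only nontrivial conceptual step is recognizing that a projector onto a sup-closed cone is necessarily deflationary, so that a fixed point of the composition $P_A P_B$ is forced to be a common fixed point of both $P_A$ and $P_B$ individually. The hypothesis $z \neq \bzero$ plays no active role in the deduction itself; it merely rules out the trivial solution and ensures that the resulting pair $(x,y)$ corresponds to a genuine nonzero common element of the two cones.
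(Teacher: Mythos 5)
Your proof is correct, and since the paper states Proposition~\ref{p:2sided-sep} without an explicit proof (treating it as an immediate consequence of Proposition~\ref{p:ax=b} and the basic properties of the projectors $P_{\cV}$), your argument is precisely the intended one: part 1 follows from $Ax\in\spann(A)$, $By\in\spann(B)$ and the fixed-point characterization, and part 2 from the deflationary property $P_{\cV}w\leq w$, which forces a fixed point of $P_AP_B$ to be a common fixed point of $P_A$ and $P_B$, hence of the form $Ax=By=z$ by the definition \eqref{span(A)}. No gaps; your remark that $z\neq\bzero$ is not actually used in the deduction is also accurate.
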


This approach to two-sided systems is also useful in the
case of systems with non-separated variables
$Ax=Bx$,
which is of greater importance for us here. This system
is equivalent to
\begin{equation}
\label{CDdef}
\begin{split}
Cx &= Dy,\ \text{where}\\
C=
\begin{pmatrix}
A\\
B
\end{pmatrix}, &\quad
D=
\begin{pmatrix}
I_m\\
I_m
\end{pmatrix},
\end{split}
\end{equation}
and $I_m=(\delta_{ij})\in\RR^{m\times m}$ denotes the max-plus
$m\times m$ identity matrix
with entries
\begin{equation}
\delta_{ij}=
\begin{cases}
0, & \text{if $i=j$},\\
\bzero, & \text{if $i\ne j$}.
\end{cases}
\end{equation}
In this case we have the following version of Proposition \ref{p:2sided-sep}.
\begin{proposition}
\label{p:2sided-nonsep}
Let $A,B\in\RR^{m\times n}$.
\begin{itemize}
\item [1.] If $x$ satisfies $Ax=Bx\neq\bzero$,
then $v=(z\; z)^T$, where $z=Ax=Bx$,
belongs to $\spann(C)\cap\spann(D)$. Equivalently,
$P_CP_D v=P_DP_C v=P_C v=v$.
\item[2.] If $v=(z\; z)^T\neq\bzero$ and $P_C v=v$, then there exist $x$
such that $Ax=Bx=v$.
\end{itemize}
\end{proposition}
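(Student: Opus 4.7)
The plan is to reduce Proposition \ref{p:2sided-nonsep} to Proposition \ref{p:2sided-sep} by exploiting the reformulation $Cx=Dy$ introduced in \eqref{CDdef}. The key structural observation is that the columns of $D$ are exactly the vectors $(e_i\;e_i)^T$, where $e_i$ is the $i$-th max-plus standard basis vector of $\RRm$, so that
\[
\spann(D)=\{(w\;w)^T\mid w\in\RRm\}.
\]
Consequently, any vector of the form $v=(z\;z)^T$ already lies in $\spann(D)$, whence by Proposition~\ref{p:ax=b} we automatically have $P_Dv=v$.

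For part 1, suppose $x$ satisfies $Ax=Bx=z\neq\bzero$. Setting $y:=z$ and using the block structure of $C$ and $D$, we have
\[
Cx=\begin{pmatrix}Ax\\Bx\end{pmatrix}=\begin{pmatrix}z\\z\end{pmatrix}=Dy=v\neq\bzero.
\]
Thus $(x,y)$ is a solution of the separated-variable system $Cx=Dy$, and Proposition~\ref{p:2sided-sep}(1) gives $v\in\spann(C)\cap\spann(D)$ together with $P_CP_Dv=P_DP_Cv=v$. In particular $P_Cv=v$. Combined with the automatic identity $P_Dv=v$ noted above, this yields all three equalities in the statement.

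For part 2, assume $v=(z\;z)^T\neq\bzero$ and $P_Cv=v$. The latter means exactly that $v\in\spann(C)$, i.e., there exists $x\in\RRn$ with $Cx=v$. Unpacking the block structure of $C$, this reads
\[
\begin{pmatrix}Ax\\Bx\end{pmatrix}=\begin{pmatrix}z\\z\end{pmatrix},
\]
so $Ax=Bx=z$, as required.

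There is essentially no hard step: once one recognizes that $\spann(D)$ is the ``diagonal'' cone and therefore that $v=(z\;z)^T$ is fixed by $P_D$ for free, the statement is a direct translation of Proposition~\ref{p:2sided-sep} through the encoding \eqref{CDdef}. The only point that deserves a line of verification is the identification of $\spann(D)$, which is immediate from the definition \eqref{span(A)}.
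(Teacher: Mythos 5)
Your proof is correct and follows exactly the route the paper intends: the paper states this proposition without proof as a ``version'' of Proposition~\ref{p:2sided-sep} obtained through the encoding~\eqref{CDdef}, and your argument simply makes that reduction explicit, the only added (and correct) observation being that $\spann(D)$ is the diagonal cone so $P_Dv=v$ holds automatically for $v=(z\;z)^T$.
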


Pairs $(x,y)\neq\bzero$ such
that $Ax=By=\bzero$ are described by:
$x_i\neq\bzero\Leftrightarrow A_{\cdot i}=\bzero$ and
$y_j\neq\bzero\Leftrightarrow B_{\cdot j}=\bzero$.
Analogously, vectors $x\neq\bzero$ such that
$Ax=Bx=\bzero$ are described by $x_i\neq\bzero\Leftrightarrow
A_{\cdot i}=B_{\cdot i}=\bzero$. Any such pair of vectors
can be added to any other pair $(x',y')$ or, respectively, vector $x'$,
and the resulting pair of vectors will
satisfy the system
if and only if so does $(x',y')$ or, respectively, $x'$.
Therefore, we can assume in the sequel without
loss of generality that there are no such solutions, i.e.,
that 1) $A$ and $B$ do not have $\bzero$ columns in the case of
separated variables, 2) $A$ and $B$ do not have common
$\bzero$ columns in the case of non-separated variables.

\subsection{Projectors and Perron-Frobenius theory}
Suppose that a function $f:\RRn\to\RRn$ is
homogeneous, isotone
and continuous in the topology
induced by \eqref{dist-exp}.
As $x\mapsto\exp(x)$ yields a
homeomorphism with $\Rp^n$ endowed with
the usual Euclidean topology, we can use
the spectral theory for homogeneous,
isotone and continuous functions in $\Rp^n$. We will use the following
important identities, which follow from the results of Nussbaum~\cite{Nus},
see~\cite[Lemma 2.8]{AGG-10} for the proof.
\begin{theorem}[Coro.~of~\cite{Nus},{\cite[Lemma 2.8]{AGG-10}}]
\label{Nussbaum}
Let $f$ denote an order-preserving, additively homogeneous
and continuous map from $(\R\cup\{-\infty\})^n$ to itself. Then
it has a largest eigenvalue
\[\rad(f)
:=\max\{\lambda\mid\exists x\in\RRn,\;x\not\equiv-\infty,\ \lambda+x= f(x)\},\\
\]
which coincides with 
\begin{gather}
\label{CW1}
\rad(f)=\max\{\lambda\mid\exists x\in\RRn,\;x\not\equiv-\infty,\ \lambda+x\leq f(x)\},\\
\label{CW2}
\rad(f)=\inf\{\lambda\mid\exists x\in\R^n,\ \lambda+x\geq f(x)\}.
\end{gather}
\end{theorem}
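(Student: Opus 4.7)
The plan is to reduce to the classical nonlinear Perron-Frobenius theory on $\R_+^n$ via the coordinatewise exponential. Setting $y_i=\exp(x_i)$, with $y_i=0$ corresponding to $x_i=\bzero$, turns $f:\RRn\to\RRn$ into a map $F:\R_+^n\to\R_+^n$ that is continuous, order-preserving, and positively homogeneous of degree one. The max-plus eigenequation $\lambda+x=f(x)$ becomes the ordinary eigenequation $\mu y=F(y)$ with $\mu=e^\lambda$; similarly the inequality $\lambda+x\le f(x)$ of \eqref{CW1} becomes $\mu y\le F(y)$ with $y\ne 0$, and the inequality $\lambda+x\ge f(x)$ of \eqref{CW2} becomes $\mu y\ge F(y)$ with $y$ strictly positive entrywise. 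After this translation, the theorem reduces to the statement that for a continuous order-preserving $1$-homogeneous self-map $F$ of $\R_+^n$, the largest nonnegative eigenvalue on nonzero vectors equals both Collatz-Wielandt numbers.

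Existence of an eigenvector is the substantive input, and I would borrow it from Nussbaum. A clean route is to perturb $F$ to $F_\epsilon(y):=F(y)+\epsilon\mathbf{1}$, normalize by $y\mapsto F_\epsilon(y)/\sum_j F_\epsilon(y)_j$ and apply Brouwer's fixed-point theorem on the standard simplex to obtain $F_\epsilon(y_\epsilon)=\mu_\epsilon y_\epsilon$; compactness then yields a limit pair $(\mu^*,y^*)$ with $F(y^*)=\mu^* y^*$ and $y^*\ne 0$. Maximality and the two Collatz-Wielandt identities reduce to a monotone iteration argument: from $\mu y\le F(y)$ and $y\ne 0$ one obtains $\mu^k y\le F^k(y)$, bounding $\mu$ by the asymptotic growth rate of $F^k$, which in turn is controlled by $\mu^*$; the reverse bound works symmetrically when $y$ is strictly positive, since monotonicity preserves strict positivity along the iteration.

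The main obstacle is the asymmetry between \eqref{CW1} and \eqref{CW2}: the first allows $x\not\equiv\bzero$, while the second requires $x\in\R^n$. This reflects the subtlety at the boundary of $\R_+^n$ where components may vanish: the lower Collatz-Wielandt bound is stable under passage to the boundary, whereas the upper bound needs strict positivity to rule out degenerate cases in which $F(y)$ has a nonzero component over a zero component of $y$. Once the eigenvector $(\mu^*,y^*)$ is in hand, the supremum in \eqref{CW1} is attained at $x^*=\log y^*$; however \eqref{CW2} is only an infimum in general, and one must exhibit, for every $\lambda>\rad(f)$, a fully finite $x$ with $\lambda+x\ge f(x)$. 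This is done by taking $x^*$, replacing its $\bzero$-components by a large negative constant $-M$, and using continuity and $1$-homogeneity to verify that the resulting inequality holds for $M$ large enough and $\lambda$ any fixed value strictly above $\rad(f)$.
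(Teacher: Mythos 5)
The paper does not actually prove this statement: it is quoted as a corollary of Nussbaum's results, with the proof deferred to \cite[Lemma 2.8]{AGG-10}. Your skeleton (exponential conjugation to an order-preserving, $1$-homogeneous, continuous self-map $F$ of $\R_+^n$, existence of an eigenpair via Brouwer applied to the normalized perturbations $F_\epsilon(y)=F(y)+\epsilon\mathbf{1}$, then Collatz--Wielandt comparisons) is exactly the standard route behind those citations, so the overall strategy is sound. However, two steps as written have a genuine gap, and both have the same root cause: you try to control everything by the limit eigenvector $y^*$, which may lie on the boundary of $\R_+^n$, whereas the comparisons you need require \emph{interior} super-eigenvectors.

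First, ``the asymptotic growth rate of $F^k$ \ldots is controlled by $\mu^*$'' is unjustified: if $y^*$ has zero components, an arbitrary starting vector cannot be dominated by a multiple of $y^*$, so $F^k(\mathbf{1})\le c(\mu^*)^k y^*$ has no basis. Second, and more seriously, your final construction for \eqref{CW2} --- replace the $\bzero$-components of $x^*$ by $-M$ and claim the inequality $\lambda+x\geq f(x)$ holds ``for $M$ large enough'' by continuity --- is false in general. Take $n=2$ and $f(x_1,x_2)=\bigl(\tfrac{x_1+x_2}{2},\,x_2\bigr)$ (order-preserving, additively homogeneous, continuous; multiplicatively $F(y_1,y_2)=(\sqrt{y_1y_2},y_2)$). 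Here $\rad(f)=0$ and $x^*=(-\infty,0)$ is a legitimate eigenvector for the maximal eigenvalue; with $x_M=(-M,0)$ one gets $f(x_M)=(-M/2,0)$, and the first coordinate of $\lambda+x_M\geq f(x_M)$ reads $\lambda-M\geq -M/2$, which fails precisely when $M>2\lambda$. So continuity alone cannot give the claim, and whether your particular Brouwer-limit $y^*$ avoids such boundary pathologies is neither obvious nor addressed. The fix is already inside your own construction: the perturbed eigenvectors are strictly positive (since $F_\epsilon(y)\geq\epsilon\mathbf{1}$), and they satisfy $F(y_\epsilon)\leq F_\epsilon(y_\epsilon)=\mu_\epsilon y_\epsilon$ with $\mu_\epsilon\to\mu^*$ along a subsequence; translating back, this exhibits finite vectors witnessing \eqref{CW2} for every $\lambda>\rad(f)$, so the infimum in \eqref{CW2} is at most $\mu^*$. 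Combining this with the elementary comparison lemma (if $\mu y\leq F(y)$, $y\neq 0$, and $F(z)\leq\nu z$ with $z$ interior, scale $y$ so that $cy\leq z$ with equality in some coordinate to conclude $\mu\leq\nu$) closes the chain: every eigenvalue is at most the \eqref{CW1} value, which is at most the \eqref{CW2} value, which is at most $\mu^*$, which is attained; no growth-rate argument is needed.
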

Note that \eqref{CW2} is nonlinear generalization of the classical Collatz-Wielandt
formula \cite{Minc}.
Equations \eqref{CW1} and \eqref{CW2}
are useful in max-plus algebra,
since they work for max-plus matrix multiplication as well as for
compositions of nonlinear projectors.
For \eqref{CW2} it is essential that
it is taken over vectors with real entries, and that the infimum
may not be reached.
Using \eqref{CW2} we obtain
that the spectral radius of such functions is isotone:
$f(x)\leq g(x)$ for all $x\in\R^n$ implies $\rad(f)\leq\rad(g)$.
We next recall an application of \eqref{CW2} to the
metric properties of compositions of
projectors, which appeared in~\cite{GS-08}. The {\em Hilbert distance}
between $u,v\in\RRn$ such that $\supp(u)=\supp(v)$ is defined by
\begin{equation}
\label{e:hilb}
\hilbdist(u,v)=\max\limits_{i,j\in\supp(v)}
(u_i-v_i+v_j-u_j).
\end{equation}
If $\spann(u)\neq\spann(v)$ then we set $\hilbdist(u,v)=+\infty$.
Using \eqref{e:hilb} we define the Hilbert distance between
cones $\spann(A)$ and $\spann(B)$, for $A\in\RR^{m\times n_1}$
and $B\in\RR^{m\times n_2}$:
\begin{equation}
\label{e:hilbmat}
\hilbdist(A,B):=\min\{\hilbdist(u,v)\mid u\in\spann(A),\; v\in\spann(B),\;
\supp(u)=\supp(v)\}.
\end{equation}
\begin{theorem}[cp. \cite{GS-08}, Theorem 25]
\label{t:rad-hilb}
Let $A\in\RR^{m\times n_1}$ and $B\in\RR^{m\times n_2}$.
Then
\begin{equation}
\label{e:rad-hilb1}
\rad(P_AP_B)=\rad(P_BP_A)=-\hilbdist(A,B).
\end{equation}
If $\hilbdist(A,B)$ is finite then
it is attained by any eigenvector $\overline{u}$ of $P_AP_B$
with eigenvalue $\rad(P_AP_B)$,
and its image $\overline{v}$ by $P_B$.
\end{theorem}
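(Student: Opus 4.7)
The plan is to apply the Collatz-Wielandt characterizations of Theorem \ref{Nussbaum} to the composition $f = P_AP_B$. Each projector $P_A$, $P_B$ is order-preserving, additively homogeneous, and continuous in the topology induced by \eqref{dist-exp}, and these properties pass to the composition, so Theorem \ref{Nussbaum} applies to $f$ and $\rad(P_AP_B)$ is well defined as a max. The theorem then follows from two matching inequalities, using \eqref{CW1} for the lower bound and a direct eigenvector argument for the upper bound.

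For the upper bound $\rad(P_AP_B) \leq -\hilbdist(A,B)$, I would start with an eigenvector $u \not\equiv \bzero$ satisfying $\lambda + u = P_AP_B u$ with $\lambda = \rad(P_AP_B)$, and set $v = P_B u$, so that $P_A v = \lambda + u$. Since $\spann(A)$ is stable under shifts by real scalars, $\lambda + u \in \spann(A)$ gives $u \in \spann(A)$, and clearly $v \in \spann(B)$. The projector inequalities $v = P_B u \leq u$ and $P_A v \leq v$ translate to $v \leq u$ and $\lambda + u \leq v$, so coordinatewise on the common support, $\lambda \leq v_j - u_j \leq 0$. Hence $\hilbdist(u,v) = \max_j (u_j - v_j) + \max_j (v_j - u_j) \leq -\lambda$, which yields $\rad(P_AP_B) \leq -\hilbdist(u,v) \leq -\hilbdist(A,B)$.

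For the lower bound I would exhibit a vector fitting the hypothesis of \eqref{CW1}. Fix $u \in \spann(A)$ and $v \in \spann(B)$ with $\supp(u) = \supp(v)$ and set $d := \hilbdist(u,v)$. Since $\hilbdist$ is invariant under adding a real scalar to either argument, I can shift $v$ by $-\max_j(v_j - u_j)$ to assume $v \leq u$ with equality at some coordinate. Combined with the definition of $d$, this forces $u - d \leq v \leq u$ componentwise. Using $v \in \spann(B)$ with $v \leq u$ gives $P_B u \geq v \geq u - d$. Applying $P_A$, which is order-preserving, additively homogeneous, and satisfies $P_A u = u$ (as $u \in \spann(A)$ by Proposition \ref{p:ax=b}), I obtain $P_AP_B u \geq P_A(u - d) = u - d$, i.e.\ $-d + u \leq P_AP_B u$. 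By \eqref{CW1}, $\rad(P_AP_B) \geq -d$; taking the infimum over admissible $(u,v)$ yields $\rad(P_AP_B) \geq -\hilbdist(A,B)$. The identity $\rad(P_BP_A) = \rad(P_AP_B)$ then follows by swapping $A$ and $B$ and invoking the symmetry $\hilbdist(A,B)=\hilbdist(B,A)$.

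For the attainment claim, when $\hilbdist(A,B)$ is finite, Theorem \ref{Nussbaum} supplies an eigenvector $\overline{u}$ with eigenvalue $\lambda = -\hilbdist(A,B)$; the argument of the second paragraph applied to $\overline{u}$ and $\overline{v} = P_B\overline{u}$ shows $\hilbdist(\overline{u},\overline{v}) \leq -\lambda = \hilbdist(A,B)$, and minimality forces equality. The main technical nuisance I expect is the support bookkeeping: one must check that whenever $\hilbdist(A,B) < \infty$ the eigenvector $\overline{u}$ and $\overline{v} = P_B\overline{u}$ share a common support, so that the Hilbert distance is genuinely finite where used; the degenerate case $\hilbdist(A,B) = +\infty$ has to be treated separately and reduces to observing that no real eigenvector of $P_AP_B$ exists on a common support, so both sides of \eqref{e:rad-hilb1} agree consistently.
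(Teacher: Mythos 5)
Your proof is correct, and it follows the paper's overall strategy: both rest on the nonlinear Collatz--Wielandt formulas of Theorem~\ref{Nussbaum}, both obtain the upper bound and the attainment claim from an eigenvector $\overline{u}$ with eigenvalue $\rad(P_AP_B)$ together with its image $\overline{v}=P_B\overline{u}$, and both dispose of the case $\hilbdist(A,B)=+\infty$ by the same support observation ($\supp(P_AP_Bu)\subseteq\supp(P_Bu)\subseteq\supp(u)$, so a finite eigenvalue forces $\spann(A)$ and $\spann(B)$ to contain vectors with common support). Where you genuinely diverge is the lower bound $\rad(P_AP_B)\geq-\hilbdist(A,B)$. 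The paper takes an admissible pair $(u,v)$, forms the rank-one projectors $P_u\leq P_A$ and $P_v\leq P_B$ onto the rays through $u$ and $v$, invokes monotonicity of the spectral radius (itself derived from \eqref{CW2}) to get $\rad(P_uP_v)\leq\rad(P_AP_B)$, and then uses the auxiliary fact, left as ``it can be shown'', that $-\hilbdist(u,v)$ is the only finite eigenvalue of $P_uP_v$; the same rank-one computation is reused in its attainment step. You instead normalize $v$ so that $u-d\leq v\leq u$ with $d=\hilbdist(u,v)$ and feed $u$ directly into \eqref{CW1} via $P_AP_Bu\geq P_Av\geq u-d$. Your route is more self-contained: it needs only $P_Au=u$ for $u\in\spann(A)$, isotonicity and additive homogeneity, and it avoids identifying the spectrum of the rank-one composition, at the price of not exhibiting the structural fact $P_uP_vu=-\hilbdist(u,v)+u$ that the paper's argument highlights. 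Your support bookkeeping in the upper bound is also sound: for a finite eigenvalue $\lambda$ the chain $\lambda+u\leq v\leq u$ forces $\supp(v)=\supp(u)$, so the pair $(u,v)$ is admissible in \eqref{e:hilbmat}, which is exactly the point you flagged.
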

\begin{proof}
As $\supp(P_AP_Bu)\subseteq\supp(P_Bu)\subseteq\supp(u)$,
it follows that $P_AP_B$ and also $P_BP_A$ may have finite
eigenvalue only if $\spann(A)$ and $\spann(B)$ have vectors
with common support. This shows the claim for the case
$\hilbdist(A,B)=+\infty$.

Now let $\hilbdist(A,B)$ be finite.
We show that $-\hilbdist(\overline{u},\overline{v})=
-\hilbdist(A,B)=\rad(P_AP_B)$.
Take arbitrary vectors $u\in\spann(A)$ and $v\in\spann(B)$
with $\supp(u)=\supp(v)$, and
let $P_u$, resp.\ $P_v$, be projectors onto
the rays $U=\{\lambda+u,\,\lambda\in\RR\}$,
resp.\ $V=\{\lambda+v,\,\lambda\in\RR\}$.
As $U\subseteq\spann(A)$ and $V\subseteq\spann(B)$, we have
that $P_u\leq P_A$ and $P_v\leq P_B$,
hence $P_uP_v\leq P_AP_B$ and,
by the monotonicity of the spectral radius,
$\rad(P_uP_v)\leq \rad(P_AP_B)$.
It can be shown that $-\hilbdist(u,v)$ is the only finite eigenvalue of
$P_uP_v$, hence $-\hilbdist(u,v)=\rad(P_uP_v)$,
and consequently $-\hilbdist(u,v)\leq \rad(P_AP_B)$ and
$-\hilbdist(A,B)\leq \rad(P_AP_B)$.
Now observe that $-\hilbdist(\overline{u},\overline{v})=\rad(P_{\overline{u}}P_{\overline{v}})$ is
equal to the eigenvalue $\rad(P_AP_B)$. This completes the proof.\eproof
\end{proof}

In the case of the systems with non-separated variables, we will be
more interested in the Chebyshev  distance. For $u,v\in\RRm$ with
$\supp(u)=\supp(v)$ it is defined by
\begin{equation}
\label{e:chebdist}
\chebdist(u,v)=\max_{i\in\supp(v)} |u_i-v_i|.
\end{equation}

There is an important special case when Hilbert and Chebyshev
distances coincide.

\begin{lemma}
\label{hilb-cheb}
Let $u,v\in\RRm$ be such that $u\geq v$ and $u_i=v_i$ for some
$i\in\{1,\ldots,n\}$. Then $\hilbdist(u,v)=\chebdist(u,v)$.
\end{lemma}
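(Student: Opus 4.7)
The plan is to decompose the Hilbert distance into a sum of two one-sided maxima and use the two hypotheses to show that one of them vanishes. Applying the identity $\max_{i,j}(a_i+b_j)=\max_i a_i+\max_j b_j$ to \eqref{e:hilb}, I would first rewrite
\[
\hilbdist(u,v)=\max_{i\in\supp(v)}(u_i-v_i)\;+\;\max_{j\in\supp(v)}(v_j-u_j),
\]
so that the task reduces to showing that the second maximum equals $0$ and that the first equals $\chebdist(u,v)$.

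Before doing so, I would dispose of the support issue. Since $u\geq v$ componentwise, one automatically has $\supp(u)\supseteq\supp(v)$. If the inclusion is strict, there is an index $k$ with $u_k$ finite and $v_k=-\infty$; then the supports of $u$ and $v$ do not match, so neither $\hilbdist(u,v)$ nor $\chebdist(u,v)$ is defined by a finite formula (both are declared $+\infty$ in the degenerate convention), and the statement is vacuous. From here on I would work under $\supp(u)=\supp(v)=:S$ and restrict every maximum to $S$.

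On $S$, the inequality $u\geq v$ yields $v_j-u_j\leq 0$ for every $j\in S$, hence $\max_{j\in S}(v_j-u_j)\leq 0$. The second hypothesis provides some $i_0\in S$ with $u_{i_0}=v_{i_0}$, and therefore $v_{i_0}-u_{i_0}=0$, forcing that maximum to be exactly $0$. Parallel to this, since $u_i-v_i\geq 0$ for all $i\in S$, we have $|u_i-v_i|=u_i-v_i$, so \eqref{e:chebdist} gives $\chebdist(u,v)=\max_{i\in S}(u_i-v_i)$. Substituting both observations into the displayed decomposition gives $\hilbdist(u,v)=\chebdist(u,v)$, as claimed.

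There is essentially no serious obstacle: the whole argument is a one-line calculation once the Hilbert distance is split into its nonnegative and nonpositive parts. The only subtlety is bookkeeping around $\pm\infty$ and the common-support condition. The role of each hypothesis is transparent: $u\geq v$ forces the ``negative part'' of the Hilbert distance to be nonpositive, while the existence of some $i$ with $u_i=v_i$ pins it to exactly zero.
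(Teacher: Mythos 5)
Your proof is correct and, modulo presentation, is the same elementary argument as the paper's: the paper establishes the two inequalities $\hilbdist(u,v)\leq\chebdist(u,v)$ (by bounding each term $u_j-v_j+v_l-u_l$) and $\chebdist(u,v)\leq\hilbdist(u,v)$ (by specializing $l=i$ and $j=i$), whereas you get equality in one stroke by splitting the double maximum into $\max_i(u_i-v_i)+\max_j(v_j-u_j)$ and showing the second summand is exactly $0$. Both arguments use the hypotheses $u\geq v$ and $u_{i_0}=v_{i_0}$ in precisely the same way, and your reduction to the common-support case matches the paper's opening remark that both distances are finite only when $\supp(u)=\supp(v)$.
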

\begin{proof} First note that both $\hilbdist(u,v)$ and $\chebdist(u,v)$ are finite if and only if
$\supp(u)=\supp(v)$. Only this case has to be considered.
 
If $u\geq v$ then $|u_j-v_j+v_l-u_l|\leq\max(u_j-v_j,u_l-v_l)$ for any $j$ and $l$, hence
$\hilbdist(u,v)\leq\chebdist(u,v)$. 

Fixing $l=i$ (assuming that $u_i=v_i$) we obtain
$u_j-v_j+v_l-u_l=u_j-v_j$ and fixing $j=i$ we obtain $u_j-v_j+v_l-u_l=v_l-u_l$. 
Taking maximum over such terms only yields $\chebdist(u,v)$, hence
$\chebdist(u,v)\leq\hilbdist(u,v)$.\eproof 
\end{proof}

\begin{theorem}
\label{t:rad-cheb}
Let $A,B\in\RR^{m\times n}$, and let $C$ and $D$ be defined as
in \eqref{CDdef}. Then
\begin{equation}
\label{e:rad-cheb}
\rad(P_CP_D)=\rad(P_DP_C)=-\min\limits_{x\in\RRm}
\chebdist(Ax, Bx).
\end{equation}
\end{theorem}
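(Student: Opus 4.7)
The plan is to reduce the spectral-radius computation to a Hilbert-distance computation via Theorem~\ref{t:rad-hilb}, and then show that the resulting Hilbert distance equals the minimum Chebyshev distance between $Ax$ and $Bx$.

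Applying Theorem~\ref{t:rad-hilb} to the pair $(C,D)$ gives $\rad(P_CP_D)=\rad(P_DP_C)=-\hilbdist(C,D)$, so it suffices to prove $\hilbdist(C,D)=\min_{x\in\RRn}\chebdist(Ax,Bx)$, with both sides understood as $+\infty$ if no $x$ makes $\supp(Ax)$ and $\supp(Bx)$ coincide. Since $\spann(C)=\{(Ax;Bx)^T:x\in\RRn\}$ and $\spann(D)=\{(z;z)^T:z\in\RRm\}$, the definition of $\hilbdist(C,D)$ amounts to a joint infimum of $\hilbdist((Ax;Bx)^T,(z;z)^T)$ over $x$ and $z$ constrained by $\supp(Ax)=\supp(Bx)=\supp(z)$.

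I would then fix such an $x$ and compute the inner infimum over $z$. For the upper bound, take $z=Ax\wedge Bx$ (componentwise minimum). Then $v:=(z;z)^T\le u:=(Ax;Bx)^T$, and for every $i$ in the common support either $(Ax)_i=z_i$ or $(Bx)_i=z_i$, so $u$ and $v$ agree in at least one coordinate. Lemma~\ref{hilb-cheb} then yields $\hilbdist(u,v)=\chebdist(u,v)$, and a direct computation gives $\chebdist(u,v)=\max_i\bigl((Ax)_i\vee(Bx)_i-(Ax)_i\wedge(Bx)_i\bigr)=\chebdist(Ax,Bx)$. For the matching lower bound, expand
\[
\hilbdist\bigl((Ax;Bx)^T,(z;z)^T\bigr)=\max_i\bigl((Ax)_i\vee(Bx)_i-z_i\bigr)-\min_j\bigl((Ax)_j\wedge(Bx)_j-z_j\bigr),
\]
and, for any fixed index $i$, specialize both the outer max and the outer min to that same $i$: the $z_i$ terms cancel and the quantity $|(Ax)_i-(Bx)_i|$ survives as a lower bound. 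Taking the max over $i$ shows that $\hilbdist((Ax;Bx)^T,(z;z)^T)\ge\chebdist(Ax,Bx)$ for every admissible $z$.

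Combining the two bounds gives $\min_z\hilbdist((Ax;Bx)^T,(z;z)^T)=\chebdist(Ax,Bx)$, and a final minimization over $x$ with matching supports yields the theorem. The only subtlety requiring care is the handling of the matching-support condition built into the definitions of $\hilbdist$ and $\chebdist$; beyond that, I do not anticipate a genuine obstacle, since the construction $z=Ax\wedge Bx$ simultaneously forces equality in one coordinate (so Lemma~\ref{hilb-cheb} applies) and realizes the Chebyshev distance, making the two bounds match exactly.
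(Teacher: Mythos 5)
Your proposal is correct and follows essentially the same route as the paper: reduce to a Hilbert-distance computation via Theorem~\ref{t:rad-hilb}, take $z=Ax\wedge Bx$ (which is exactly $P_D u$ for $u=(Ax\; Bx)^T$), and invoke Lemma~\ref{hilb-cheb} to identify the Hilbert and Chebyshev distances. The only divergence is in how the inner minimization over $z$ is justified: you prove the matching lower bound by directly expanding the Hilbert-distance formula and specializing the max and min to a common index, whereas the paper gets the optimality of $P_D u$ from a second application of Theorem~\ref{t:rad-hilb} (to the ray through $u$ and $\spann(D)$, via the eigenvector characterization), so your version of that step is slightly more elementary but the overall argument is the same.
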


\begin{proof}
Theorem \ref{t:rad-hilb} implies that
\begin{equation}
\label{e:rad-hilb4}
\rad(P_CP_D)=-\min\{\hilbdist(u,v)\mid u\in\spann(C), v\in\spann(D).\}
\end{equation}

Let $u\in\spann(C)$ and denote by $P_u$ the projector onto
$U:=\{\lambda+u\mid \lambda\in\RR\}$.
Then $u$ is an eigenvector of $P_uP_D$ which corresponds to the spectral radius
of this operator, and applying Theorem~\ref{t:rad-hilb} to the max cones
$U$ and $\spann(D)$ we see that
\begin{equation}
\label{updudist}
\hilbdist(u,P_D u)=\min\{\hilbdist(u,v)\mid v\in\spann(D)\}.
\end{equation}
Note that \eqref{updudist} also holds if there is no $v\in\spann(D)$
with $\supp(u)=\supp(v)$, in which case $\hilbdist(u,P_D u)=+\infty$.
This implies
\begin{equation}
\label{e:rad-hilb3}
\rad(P_CP_D)=-\min\{\hilbdist(u,P_D u)\mid u\in\spann(C)\}.
\end{equation}
Observe that
\begin{equation}
u=
\begin{pmatrix}
Ax\\
Bx
\end{pmatrix}, \quad
P_D u=
\begin{pmatrix}
Ax\wedge Bx\\
Ax \wedge Bx
\end{pmatrix}
\end{equation}
for some $x\in\RRm$, and also that
$u$ and $P_D u$ satisfy the conditions of Lemma \ref{hilb-cheb}
unless $P_D u=\bzero$.
Hence $\hilbdist(u,P_D u)=\chebdist(u,P_D u)=\chebdist(Ax,Bx).$
Conversely, $\chebdist(Ax,Bx)$ equals $\hilbdist(u, P_D u)$ for
$u=(Ax\ Bx)^T$. Hence the r.h.s. of \eqref{e:rad-cheb} is the same as
the r.h.s. of \eqref{e:rad-hilb3}, which completes the proof.\eproof
\end{proof}

\subsection{Min-max functions and Chebyshev distance}
Let $A\in\RR^{m\times n_1}$ and $B\in\RR^{m\times n_2}$.
In order to find a point in the intersection of $\spann(A)$ and
$\spann(B)$ (or equivalently, solve $Ax=By$), one can compute
the action of $(P_AP_B)^l$, for $l=1,2,\ldots,$ on a vector
$z\in\RRm$. Dually one can start with a vector $x^0\in\RR^{n_1}$
and compute
\begin{equation}
\label{e:altmeth}
x^k=A^{\diez}BB^{\diez}Ax^{k-1},\quad k\geq 1.
\end{equation}
We can assume that $A$ and $B$ do not have columns equal to $\bzero$
so that $A^{\diez}z\in\RR^{n_1}$ and $B^{\diez} z\in\RR^{n_2}$
for any $z\in\RR^{m}$.

If at some stage $x^k=x^{k-1}\neq\bzero$ then we can stop,
$x^k$ is a solution of the system. If all coordinates of $x^k$ are less than those
of $x^0$ then we can stop, the system has no solution.
More details on this simple algorithm called {\em alternating method}
can be found in \cite{CGB-03} and \cite{Ser-09-inLS}, see
also~\cite{singer}.
In particular, it converges to a solution
with all components finite in a finite number of steps, if such a solution exists.

Let $A,B\in\RR^{m\times n}$. A system $Ax=Bx$ can be written equivalently as
$Cx=Dy$ with $C$ and $D$ as in \eqref{CDdef}. Applying
alternating method \eqref{e:altmeth} to this system, i.e., substituting
$C$ and $D$ for $A$ and $B$ in~\eqref{e:altmeth} we obtain $x^k=g(x^{k-1})$,
where
\begin{equation}
\label{e:gdef}
g(x)=A^{\diez}Ax\wedge B^{\diez}Bx\wedge
A^{\diez}Bx\wedge B^{\diez}Ax.
\end{equation}
As it is assumed that $A$ and $B$ do not have
common $\bzero$ columns and hence $C$ (and $D$) do not have
$\bzero$ columns, $g(x)\in\RRn$ for all $x\in\RRn$.

It can be shown that (see also \cite{CGB-03})
\begin{equation}
\label{rg0axbx}
\rad(g)=0\Leftrightarrow Ax=Bx\ \text{is solvable}.
\end{equation}
In particular, if $x$ is a fixed point of $g$ then it satisfies
$Ax=Bx$. For the function
\begin{equation}
\label{e:fdef}
f(x)=x\wedge A^{\diez}Bx\wedge B^{\diez}Ax
\end{equation}
which appears in \cite{DG-06},
it is also true the other way around, since
\begin{equation}
\label{fexpl}
\begin{split}
Ax=Bx &\Leftrightarrow Ax\geq Bx\ \&\
Bx\geq Ax\Leftrightarrow\\
&\Leftrightarrow B^{\diez}Ax\geq x\ \&\ A^{\diez}Bx\geq x\Leftrightarrow\\
&\Leftrightarrow x\wedge A^{\diez}Bx\wedge B^{\diez}Ax=x.
\end{split}
\end{equation}
We also introduce the function $h$:
\begin{equation}
\label{e:hdef}
h(x):=A^{\diez}Bx\wedge B^{\diez}Ax.
\end{equation}

Although $f$, $g$ and $h$ are different functions, they have
the same spectral radius, equal to the inverse minimal Chebyshev distance between
$Ax$ and $Bx$. To show this, we use the following identity.
\begin{equation}
\label{e:chebdist1}
-\chebdist(u,v)=\max\{\lambda\colon\lambda+u\leq v\ \&\ \lambda+v\leq u\}.
\end{equation}

\begin{theorem}
\label{rf=rg}
Let $A,B\in\RR^{m\times n}$.
For $C,D$ defined by \eqref{CDdef},
and $f$, $g$ and $h$ defined by \eqref{e:fdef}, \eqref{e:gdef} and \eqref{e:hdef},
\begin{equation}
\label{e:rf=rg}
\rad(P_CP_D)=\rad(P_DP_C)=\rad(f)=\rad(g)=\rad(h)=
-\min_{x\in\RRm} d_{\infty}(Ax,Bx).
\end{equation}
\end{theorem}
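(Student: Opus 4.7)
The plan is to observe that Theorem~\ref{t:rad-cheb} already identifies $\rad(P_CP_D)=\rad(P_DP_C)$ with the quantity $\mu:=-\min_x\chebdist(Ax,Bx)$, so only the three equalities $\rad(f)=\rad(g)=\rad(h)=\mu$ remain. To compute each of these three spectral radii I will invoke the nonlinear Collatz-Wielandt formula~\eqref{CW1}, which, since $f,g,h$ are order-preserving, additively homogeneous and continuous (being built from $A$, $B$, $A^{\diez}$, $B^{\diez}$ and $\wedge$), gives
\[
\rad(F)=\sup\bigl\{\lambda\mid \exists x\not\equiv-\infty,\ \lambda+x\leq F(x)\bigr\}
\]
for every $F\in\{f,g,h\}$.

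The key computation is for $F=h$. Residuation~\eqref{res-prop} together with the max-plus homogeneity of $A$ and $B$ yields
\[
\lambda+x\leq A^{\diez}Bx\ \Longleftrightarrow\ A(\lambda+x)\leq Bx\ \Longleftrightarrow\ \lambda+Ax\leq Bx,
\]
and symmetrically $\lambda+x\leq B^{\diez}Ax\Leftrightarrow \lambda+Bx\leq Ax$. Combining both conditions and applying identity~\eqref{e:chebdist1} with $u:=Ax$ and $v:=Bx$ gives
\[
\max\{\lambda\mid \lambda+x\leq h(x)\}=-\chebdist(Ax,Bx).
\]
Taking the supremum over all $x\not\equiv-\infty$ then produces $\rad(h)=-\min_x\chebdist(Ax,Bx)=\mu$.

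The same argument applies to $f$ and $g$ once the additional terms in their definitions are shown to impose no binding constraint. For $f(x)=x\wedge h(x)$, the extra condition $\lambda+x\leq x$ reads simply $\lambda\leq 0$. For $g(x)$, the extra clauses $\lambda+x\leq A^{\diez}Ax$ and $\lambda+x\leq B^{\diez}Bx$ reduce by residuation to $\lambda+Ax\leq Ax$ and $\lambda+Bx\leq Bx$, again to $\lambda\leq 0$. Since $\chebdist(Ax,Bx)\geq 0$, the inequality $\lambda\leq-\chebdist(Ax,Bx)$ already forces $\lambda\leq 0$, so these extra constraints are vacuous. Hence the inner maximum equals $-\chebdist(Ax,Bx)$ for $f$ and $g$ as well, and $\rad(f)=\rad(g)=\mu$, completing~\eqref{e:rf=rg}.

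The only point that demands a little care is the treatment of vectors with $-\infty$ components: the residuation and Chebyshev identities must be read with the conventions of~\eqref{projector}, and such $x$ contribute $-\infty$ to the supremum in~\eqref{CW1} and so do not affect $\rad(F)$, provided (as guaranteed by the standing assumption that $A$ and $B$ share no identically $-\infty$ column) that some vector with real entries renders $\chebdist(Ax,Bx)$ finite.
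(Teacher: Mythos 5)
Your argument is correct, and its core is essentially the paper's own: both compute $\rad(h)$ (and then $\rad(f)$, $\rad(g)$) via the Collatz--Wielandt characterization \eqref{CW1}, translate $\lambda+x\leq h(x)$ by residuation into $\lambda+Ax\leq Bx$ and $\lambda+Bx\leq Ax$, apply \eqref{e:chebdist1}, and observe that the extra clauses in $f$ and $g$ only impose $\lambda\leq 0$, which is already forced. The one structural difference is the projector equalities: you import them wholesale from Theorem~\ref{t:rad-cheb}, which is legitimate since that theorem is established immediately beforehand, whereas the paper instead re-derives $\rad(g)=\rad(P_DP_C)=\rad(P_CP_D)$ by an explicit eigenvector correspondence ($v\mapsto C^{\sharp}v$ and $v\mapsto P_Cv$ in one direction, $x\mapsto (Ax\;Bx)^T$ in the other), which has the small added benefit of exhibiting how eigenvectors of the projector compositions and of $g$ are transported into each other. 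One minor imprecision in your closing caveat: the standing assumption that $A$ and $B$ share no identically $-\infty$ column does \emph{not} guarantee that some $x$ makes $\chebdist(Ax,Bx)$ finite (e.g.\ if $A$ has a $-\infty$ row while the corresponding row of $B$ is finite, the distance is $+\infty$ for every $x$); but this does not damage the proof, since your equivalences hold for every $x\not\equiv-\infty$ and every finite $\lambda$, so the identity \eqref{e:rf=rg} persists with both sides equal to $-\infty$ in that degenerate case.
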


\begin{proof}
If $v$ is an eigenvector of $P_DP_C$ with a finite eigenvalue,
then $C^{\sharp}v$ is an eigenvector of $g$
and $P_C v$ is an eigenvector $P_CP_D$, both with the
same eigenvalue. The other way around, if
$x$ is an eigenvector of $g$ with a finite eigenvalue,
then $(Ax\ Bx)^T$ is an eigenvector of $P_DP_C$ with the same
eigenvalue.
This argument shows that 1) either the spectral radii
of $P_DP_C$, $P_CP_D$ and $g$ are all finite or they
all equal $\bzero$, 2) the equality
$\rad(g)=\rad(P_DP_C)=\rad(P_CP_D)$ holds true
both in finite and in infinite case.

We show the remaining equalities. By \eqref{CW1}, $\rad(h)$ is the maximum of $\lambda$
which satisfy
\begin{equation}
\exists x\in\RRn\colon \lambda+x\leq A^{\diez} Bx\wedge B^{\diez}Ax.
\end{equation}
This is equivalent to
\begin{equation}
\label{curious}
\exists x\in\RRn\colon \lambda+Ax\leq Bx \quad\&\quad \lambda+Bx\leq Ax
\end{equation}
Using \eqref{e:chebdist1} we obtain
\begin{equation}
r(h)=\max_{x\in\RRn} -\chebdist(Ax,Bx)=-\min_{x\in\RRn}\chebdist(Ax,Bx).
\end{equation}
It follows in particular that $r(h)\leq 0$ and moreover, $\lambda\leq 0$ for any $x$
satisfying \eqref{curious}. Applying \eqref{CW1} to $f$ and $g$ we obtain
that both $r(f)$ and $r(g)$ are equal to the maximum of $\lambda$ which satisfy
\begin{equation}
\exists x\in\RRm\colon \quad \lambda\leq 0 \quad \& \quad \lambda+Ax\leq Bx \quad\&\quad \lambda+Bx\leq Ax
\end{equation}
As the first inequality follows from the other two, we obtain $r(f)=r(g)=r(h)$.\eproof
\end{proof}

Functions $f$, $g$ and $h$ as well as projectors onto finitely
generated max-plus cones and their compositions, belong to the class
of {\em min-max functions}.
Such functions were originally considered by Olsder \cite{Ols-91} and
Gunawardena \cite{Gun-94}. See \cite{CGG-99} for a formal
definition. In a nutshell, these are additively homogeneous and
order preserving maps, every coordinate of which can be represented
as a minimum of a finite number of max-plus linear forms, or as a
maximum of a finite number of min-plus linear forms.  It is
important that any min-max function $q:\RRn\to\RRn$ can be
represented as infimum of finite number of max-plus linear maps
$Q^{(p)}$ meaning that
\begin{equation}
\label{e:maxrep}
q(x)=\myinf_p Q^{(p)}x,\\
\end{equation}
in such a way that the following {\em selection property}
is satisfied:
\begin{equation}
\label{e:select}
\forall x\ \exists p:\ q(x)=Q^{(p)} x.
\end{equation}
Note that taking infimum or supremum of vectors does not necessarily select one of them,
and that selection property~\eqref{e:select} is useful, e.g., for the
policy iteration algorithm of~\cite{DG-06}).  

In connection with the mean payoff games \cite{DG-06,AGG-10}, each matrix $Q^{(p)}$ corresponds to a one player
game, where the player Min has chosen her strategy and the player Max is trying to win what he can.

In particular, $f(x),g(x)$ and $h(x)$, respectively, are represented
as infima of the max-plus linear maps $F^{(p)},$ $G^{(p)}$ and
$H^{(p)}$, whose rows are taken from the max-plus linear forms
appearing in \eqref{e:fdef}, \eqref{e:gdef} and \eqref{e:hdef},
respectively, in the following way:
\begin{equation}
\label{e:fgmats}
F_{i\cdot}^{(p)}=
\begin{cases}
I_{i\cdot},\\
-a_{ki}  + B_{k\cdot},\\
-b_{ki}  + A_{k\cdot}.
\end{cases}\quad
G_{i\cdot}^{(p)}=
\begin{cases}
-a_{ki}  + A_{k\cdot},\\
-b_{ki}  + B_{k\cdot},\\
-a_{ki}  + B_{k\cdot},\\
-b_{ki}  + A_{k\cdot}.
\end{cases}\quad
H_{i\cdot}^{(p)}=
\begin{cases}
-a_{ki}  + B_{k\cdot},\\
-b_{ki}  + A_{k\cdot}.
\end{cases}
\end{equation}
Here $I_{i\cdot}$ denotes the $i$th row of the max-plus identity
matrix, and the brackets mean that any possibility, for any
$k=1,\ldots,m$ and $a_{ki}\neq\bzero$ or $b_{ki}\neq \bzero$, can be
taken (assumed that $A$ and $B$ do not have common $\bzero$
columns). Applying Collatz-Wielandt formula \eqref{CW2} we obtain
the following, some variants of which appeared in several contexts.

\begin{proposition}[Compare with~\cite{CGG-99,GG-98,gg0,AGK-10}]
\label{p:specrad-rep}
Suppose that a min-max function $q:\RRn\to\RRn$ is
represented as infimum of max-plus linear maps
$Q^{(l)}\in\RR^{n\times n}$ so that the selection property
is satisfied.
Then
\begin{equation}
\label{e:specrad-rep}
\rad(q)=\min\limits_l \rad(Q^{(l)}).
\end{equation}
\end{proposition}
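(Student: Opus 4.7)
The plan is to prove~\eqref{e:specrad-rep} as a two-sided inequality, using Theorem~\ref{Nussbaum} (Collatz--Wielandt) together with the selection property~\eqref{e:select}. The upper bound $\rad(q)\leq\min_l\rad(Q^{(l)})$ is immediate: the representation $q(x)=\myinf_l Q^{(l)}x$ gives the pointwise bound $q(x)\leq Q^{(l)}x$ for every $l$, and then the monotonicity of the spectral radius noted right after~\eqref{CW2} (namely, $f\leq g$ on $\R^n$ forces $\rad(f)\leq\rad(g)$) yields $\rad(q)\leq\rad(Q^{(l)})$ for each $l$; taking the minimum over $l$ delivers the desired bound.

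For the reverse inequality $\rad(q)\geq\min_l\rad(Q^{(l)})$ I would exploit~\eqref{CW2} jointly with the selection property, as follows. Pick any $\lambda\in\R$ lying in the feasible set of~\eqref{CW2} applied to $q$, that is, any $\lambda$ admitting some $x\in\R^n$ with $\lambda+x\geq q(x)$. By~\eqref{e:select}, this specific $x$ produces an index $p$ with $q(x)=Q^{(p)}x$, so the inequality upgrades to $\lambda+x\geq Q^{(p)}x$ with the same finite $x$. Applying~\eqref{CW2} now to the max-plus linear map $Q^{(p)}$ yields $\lambda\geq\rad(Q^{(p)})\geq\min_l\rad(Q^{(l)})$. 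Since every admissible $\lambda$ is bounded below by $\min_l\rad(Q^{(l)})$, the infimum on the right-hand side of~\eqref{CW2} for $q$ is bounded below likewise, and the reverse inequality follows.

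The main conceptual point, rather than a computational obstacle, is that the selection property is precisely what converts the inequality $\lambda+x\geq q(x)$ into one of the form $\lambda+x\geq Q^{(p)}x$ for a single chosen matrix $Q^{(p)}$; without it one only has $q(x)\leq Q^{(l)}x$ for each $l$, which goes in the wrong direction for this argument. A minor technical detail is that~\eqref{CW2} requires the test vector to have finite entries, but the same finite $x$ used on the $q$-side transfers directly to the $Q^{(p)}$-side, so no extra work is needed; the degenerate case where some $\rad(Q^{(l)})$ equals $-\infty$ is covered by the same reasoning, since the formula $\rad(Q^{(p)})\geq\min_l\rad(Q^{(l)})$ remains valid in $\RR$.
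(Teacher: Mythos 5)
Your proof is correct and follows essentially the same route as the paper: the upper bound comes from monotonicity of the spectral radius, and the lower bound from combining the Collatz--Wielandt formula \eqref{CW2} with the selection property \eqref{e:select}. The only difference is cosmetic: the paper picks an $\epsilon$-approximate Collatz--Wielandt vector and uses the finiteness of the family $\{Q^{(l)}\}$ to extract a single index with $\rad(q)=\rad(Q^{(l)})$, whereas you bound every feasible $\lambda$ from below directly, which sidesteps that extraction step.
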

\begin{proof}
The spectral radius is isotone,
hence $\rad(q)\leq\rad(Q^{(l)})$
for all $l$.
Using \eqref{CW2} we conclude that for any $\epsilon$ there is $x\in\R^m$ such that
$q(x)\leq \rad(q)+\epsilon+x$. As $q(x)=Q^{(l)}x$
for some $l$ and there is only finite number of matrices
$Q^{(l)}$, there exists
$l$ such that
\begin{equation}
\label{CW-lowenv}
\rad(q)=\inf\{\mu\mid\exists x\in\R^n,\ Q^{(l)}x\leq\mu  + x\}=
\rad(Q^{(l)}).
\end{equation}
The proof is complete.\eproof
\end{proof}
Proposition~\ref{p:specrad-rep} can be derived alternatively from
the duality theorem in~\cite[Theorem 19]{GG-98} (see also~\cite{gg0}).
It is related to the existence of the value of stochastic games with perfect information~\cite{LL-69}. Indeed, the spectral radius can be
seen to coincide with the value of a game in which Player Max
chooses the initial state, see~\cite{AGG-10} for more information.

The greatest eigenvalue $\rad(Q^{(l)})$ of the max-plus matrix
$Q^{(l)}=(q^{(l)}_{ij})\in\RR^{n\times n}$ can be computed explicitly.
It is equal to the maximum cycle mean
of $Q^{(l)}$ defined by
\begin{equation}
\label{mcmh}
\max\limits_{1\leq k\leq n} \max\limits_{i_1,\ldots,i_k}
\frac{q^{(l)}_{i_1i_2}+q^{(l)}_{i_2i_3}+\ldots +q^{(l)}_{i_ki_1}}{k}.
\end{equation}
This result is fundamental in max-plus algebra, see~\cite{abg05,BCOQ,But:10,HOW:05}
for more details.

\section{The spectrum and the spectral function}

\subsection{Construction of the spectral function}

Given $A\in\RRmm$ and $B\in\RRmm$,
we consider the {\em two-sided eigenproblem}
which consists in finding {\em eigenvalues}
$\lambda\in\RR$ and {\em eigenvectors}
$x\in\RRn$ (which have at least one component not equal to $\bzero$),
such that
\begin{equation}
\label{2sided}
A x=\lambda  + Bx.
\end{equation}
The set of eigenvalues is called the
{\em spectrum of} $(A,B)$
and denoted by $\spec(A,B)$. 

Below we assume that $A$ and $B$ do not have $-\infty$ rows
and common $-\infty$ columns.  Note that the assumption about $-\infty$ rows can be
made without loss of generality when the solvability of~\eqref{2sided} is considered.  
Indeed, if the $i$th row of $B$ is $-\infty$ then all variables
$x_j$ such that $a_{ij}\neq -\infty$ must be equal to $-\infty$. Eliminating these 
variables as well as the corresponding columns in $A$ and $B$ and the $i$th equation, we obtain a new system where $A$ or $B$ may have $-\infty$ rows. Proceeding this way we either cancel the whole system
in which case it is unsolvable, or we are left with a system where $A$ and $B$ (what remains of them) do not have $-\infty$ rows. This procedure can be run in $O(m^2n)$ operations.

The case of $\lambda=\bzero$ appears if and only if
$A$ has $\bzero$ columns, and the corresponding
eigenvectors are described by
$x_i\neq\bzero\Leftrightarrow A_{\cdot i}=\bzero$.
In the sequel we assume that $\lambda$ is finite.

Problem \eqref{2sided} is equivalent to $C(\lambda)x=Dy$,
where $C(\lambda)\in\RR^{2m\times n}$ and $D\in\RR^{2m\times m}$ are defined by
\begin{equation}
\label{ClD}
C(\lambda)=
\begin{pmatrix}
A\\
\lambda  + B
\end{pmatrix},\quad
D=
\begin{pmatrix}
I_m\\
I_m
\end{pmatrix}.
\end{equation}
As it follows from Theorem~\ref{rf=rg}, $\spec(A,B)=\{\lambda\colon
r(P_DP_{C(\lambda)})=0\}=\{\lambda\colon r(h_{\lambda})=0\}$, where
\begin{equation}
\label{flambda}
h_{\lambda}(x)=(\lambda+A^{\diez}Bx)\wedge(-\lambda+B^{\diez}Ax).
\end{equation}
The function $h_{\lambda}$ can be represented as infimum of max-plus
linear maps so that the selection property \eqref{e:select} is
satisfied. Namely,
\begin{equation}
\label{flambda2}
h_{\lambda}(x)=\myinf_p H_{\lambda}^{(p)}x,
\end{equation}
where for $i=1,\ldots,n$
\begin{equation}
\label{flambdarows}
(H_{\lambda}^{(p)})_{i\cdot}=
\begin{cases}
\lambda-a_{ki} +B_{k\cdot}, & \text{for $1\leq k\leq n$, $a_{ki}\neq\bzero$},\\
-\lambda -b_{ki}  + A_{k\cdot}, & \text{for $1\leq k\leq n$, $b_{ki}\neq\bzero$},
\end{cases}
\end{equation}
the brackets meaning that any listed choice can be taken.

The greatest eigenvalue of $H_{\lambda}$ equals the maximum cycle
mean of $H_{\lambda}$. Using formula \eqref{mcmh}, we observe that
$\rad(H_{\lambda})$ is a piecewise-affine function, meaning that it
is composed of a finite number of affine pieces. More precisely, we
have the following.

\begin{proposition}
\label{p:flambdaconv} 
$\rad(H_{\lambda}^{(p)})$ is a finite
piecewise-affine convex Lipschitz function of $\lambda$.
\end{proposition}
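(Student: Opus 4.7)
My plan is to exploit the observation that each finite entry of $H_\lambda^{(p)}$ is affine in $\lambda$ with slope $\pm 1$, and then invoke the maximum cycle mean characterization~\eqref{mcmh} of the greatest max-plus eigenvalue. By~\eqref{flambdarows}, the selection $p$ chooses, for each row index $i$, either a row of the form $\lambda - a_{ki}+B_{k\cdot}$ (call this type $+$) or a row of the form $-\lambda - b_{ki}+A_{k\cdot}$ (call this type $-$). Consequently there is a sign $\varepsilon_i\in\{+1,-1\}$ attached to row $i$ such that every finite entry $(H_\lambda^{(p)})_{ij}$ has the form $\varepsilon_i\lambda + c_{ij}$, for some real constant $c_{ij}$ independent of $\lambda$.

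Applying~\eqref{mcmh}, $\rad(H_\lambda^{(p)})$ is the maximum, over cyclic tuples $(i_1,\ldots,i_k)$ with $1\le k\le n$, of the mean $\tfrac{1}{k}\sum_{j=1}^k (H_\lambda^{(p)})_{i_j i_{j+1}}$. For a cycle all of whose edges are finite, this mean equals $\bigl(\tfrac{1}{k}\sum_j \varepsilon_{i_j}\bigr)\lambda + \tfrac{1}{k}\sum_j c_{i_j i_{j+1}}$, an affine function of $\lambda$ whose slope lies in $[-1,1]$. Since there are only finitely many such tuples, $\rad(H_\lambda^{(p)})$ is the maximum of finitely many affine functions of $\lambda$, and hence is piecewise-affine with finitely many pieces, convex, and Lipschitz continuous with constant at most $1$.

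It remains to verify that this maximum is not $-\infty$. Here I would use the standing assumptions that $A$ and $B$ have no $-\infty$ row and no common $-\infty$ column. The absence of common $-\infty$ columns ensures that for every $i$ at least one alternative in~\eqref{flambdarows} is available, so the policy $p$ is well defined. The absence of $-\infty$ rows then guarantees that the selected row $B_{k\cdot}$ or $A_{k\cdot}$ contains some finite entry, so every row of $H_\lambda^{(p)}$ has at least one finite entry. Consequently every vertex of the weighted digraph associated with $H_\lambda^{(p)}$ has a finite outgoing edge, and following such edges from an arbitrary starting vertex must eventually revisit a previously visited vertex, producing a cycle all of whose edges are finite and hence a real-valued cycle mean. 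The conceptually crisp part of the argument is the slope computation, which yields convexity and the Lipschitz bound in a single stroke; the only technical care is required in this last finiteness step, where the standing hypotheses on $-\infty$ rows and columns of $A$ and $B$ are used crucially.
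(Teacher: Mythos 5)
Your proof is correct and follows essentially the same route as the paper: both express $\rad(H_\lambda^{(p)})$ via the maximum cycle mean formula~\eqref{mcmh} as a pointwise maximum of finitely many affine functions of $\lambda$ with slopes of modulus at most $1$ (giving convexity, piecewise-affinity and the Lipschitz bound), and both secure finiteness from the standing assumption that $A$ and $B$ have no $-\infty$ rows, so that the digraph of $H_\lambda^{(p)}$ contains a cycle of finite weight. Your write-up merely spells out the finiteness step in more detail than the paper does.
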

\begin{proof}
Using \eqref{mcmh} we observe that $\rad(H_{\lambda}^{(p)})=\bzero$
if and only if the associated digraph of $H_{\lambda}^{(p)}$ is
acyclic, which cannot happen when $A$ and $B$ and hence $H_{\lambda}^{(p)}$
do not have $-\infty$ rows.

If $\rad(H_{\lambda}^{(p)})$ is finite, then any finite cycle mean
of $H_{\lambda}^{(p)}$ can be written as $(k\lambda+a)/l$, where $l$
is the length of the cycle and $k$ is an integer number with modulus
not greater than $l$, hence this affine function is Lipschitz. The
function $\rad(H_{\lambda}^{(p)})$ is pointwise maximum of a finite
number of such affine functions, hence it is a convex Lipschitz
piecewise-affine function.\eproof
\end{proof}

\begin{definition}[Spectral Function]
We define the {\em spectral function} of~\eqref{2sided} by
\begin{equation}
\label{specfunc-def}
s(\lambda):=\rad(h_{\lambda})=\rad(P_DP_{C(\lambda)}).
\end{equation}
\end{definition}

It follows from Theorem~\ref{rf=rg} that $s(\lambda)\leq 0$ and that
$s(\lambda)=0$ if and only if $\lambda\in\spec(A,B)$. In general,
$s(\lambda)$ is equal to the inverse minimal Chebyshev distance
between $Ax$ and $\lambda+Bx$.

By Proposition \ref{p:specrad-rep},
\begin{equation}
\label{e:low-env}
s(\lambda)=\myinf_p\rad(H_{\lambda}^{(p)}).
\end{equation}

As $\rad(H_{\lambda}^{(p)})$ are piecewise-affine and Lipschitz, we
conclude the following.

\begin{corollary}
$s(\lambda)$ is a finite piecewise-affine Lipschitz function.
\end{corollary}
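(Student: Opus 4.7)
The plan is to combine the representation \eqref{e:low-env} with Proposition \ref{p:flambdaconv} and the fact that the index $p$ ranges over a finite set. Concretely, I would proceed in three short steps.

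First, I would argue that the family $\{H_\lambda^{(p)}\}_p$ is finite. From~\eqref{flambdarows}, each row $i$ of $H_\lambda^{(p)}$ is selected among at most $2n$ alternatives (indexed by some $k$ and by the choice between the two cases), so the total number of distinct matrices $H_\lambda^{(p)}\in\RR^{n\times n}$ is bounded by $(2n)^n$. In particular, the infimum in~\eqref{e:low-env} is taken over a finite collection.

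Second, I would apply Proposition~\ref{p:flambdaconv}: for each fixed $p$, the function $\lambda\mapsto\rad(H_\lambda^{(p)})$ is finite, piecewise-affine, convex, and Lipschitz, say with Lipschitz constant $L_p\leq 1$ (since each coefficient in~\eqref{flambdarows} depends on $\lambda$ as $\pm\lambda+\mathrm{const}$, and dividing by the cycle length $l$ and multiplying by $|k|\leq l$ leaves the slope in $[-1,1]$). Hence the pointwise minimum
\[
s(\lambda)=\min_p \rad(H_\lambda^{(p)})
\]
is a minimum of finitely many finite piecewise-affine Lipschitz functions, and therefore itself is finite, piecewise-affine, and Lipschitz with constant $\max_p L_p\leq 1$. (Finiteness follows simply because each $\rad(H_\lambda^{(p)})$ is finite, so the minimum cannot be $-\infty$, while the upper bound $s(\lambda)\leq 0$ has already been observed after Theorem~\ref{rf=rg}.)

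The only mild subtlety, and the place where I would be careful, is that the minimum destroys the convexity enjoyed by each individual $\rad(H_\lambda^{(p)})$; but convexity is not asserted in the corollary, only piecewise-affineness and the Lipschitz property, both of which are trivially preserved under finite minima. No further arguments are needed.
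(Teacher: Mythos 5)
Your proposal is correct and follows essentially the same route as the paper: the paper also deduces the corollary directly from the finite representation \eqref{e:low-env} together with Proposition~\ref{p:flambdaconv}, noting that a pointwise minimum of finitely many finite piecewise-affine Lipschitz functions is again finite, piecewise-affine and Lipschitz (convexity being lost, but not needed). Your extra bookkeeping (counting the representing matrices and bounding the Lipschitz constant by $1$) just makes explicit what the paper leaves implicit.
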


\if{
Let us consider the case $s(\lambda)=-\infty$ in more detail. 
For arbitrary $C=(c_{ij})\in\Rmax^{m\times n}$ define
$C^{\circ}=(c_{ij}^{\circ})\in\Rmax^{m\times n}$ by
\begin{equation}
\label{abcirc}
c_{ij}^{\circ}=
\begin{cases}
0, & \text{if $c_{ij}\in\R$},\\
-\infty, & \text{if $c_{ij}=-\infty$}.
\end{cases}.
\if{
b_{ij}^{\circ}=
\begin{cases}
0, & \text{if $b_{ij}\in\R$},\\
-\infty, & \text{if $b_{ij}=-\infty$}.
\end{cases},\quad
}\fi
\end{equation}
The spectral function of the eigenproblem
$A^{\circ}x=\lambda+B^{\circ}x$ will be denoted by
$s^{\circ}(\lambda)$.

\begin{proposition}
\label{1stboolean}
The following are equivalent:
\begin{itemize}
\item[1.] $s(\lambda)$ is finite for all $\lambda$;
\item[2.] $s^{\circ}(\lambda)$ is finite for all $\lambda$;
\item[3.] $A^{\circ}x=B^{\circ}x$ has a nontrivial solution whose
entries belong to $\{0,-\infty\}$.
\end{itemize}
\end{proposition}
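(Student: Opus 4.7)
The plan is to reduce all three conditions to a single combinatorial condition on the support patterns of $A$ and $B$, and then observe that this combinatorial condition is evidently independent of $\lambda$.

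First I would invoke Theorem~\ref{rf=rg} to rewrite $s(\lambda)=-\min_{x\in\RRn} d_\infty(Ax,\lambda+Bx)$. Since $d_\infty(u,v)$ is finite if and only if $\supp(u)=\supp(v)$, and since $\supp(\lambda+Bx)=\supp(Bx)$ whenever $\lambda$ is finite, the condition $s(\lambda)>-\infty$ amounts to the existence of some $x\not\equiv -\infty$ with $\supp(Ax)=\supp(Bx)$. This condition makes no reference to $\lambda$, so the quantifier ``for all $\lambda$'' in (1) and (2) becomes a tautology.

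Next I would exploit the fact that $(Ax)_i\neq-\infty$ holds precisely when there exists $j$ with $a_{ij}\neq -\infty$ and $x_j\neq-\infty$; hence $\supp(Ax)$ depends only on $\supp(x)$ and on the $0/-\infty$ support pattern $A^\circ$. Introducing $x^\circ\in\{0,-\infty\}^n$ defined by $x^\circ_j=0$ iff $x_j\neq-\infty$, we get $\supp(Ax)=\supp(A^\circ x^\circ)$, and similarly for $B$. Conversely, every nontrivial $0/-\infty$ vector arises as some $x^\circ$. Therefore condition (1) is equivalent to the existence of a nontrivial $y\in\{0,-\infty\}^n$ with $\supp(A^\circ y)=\supp(B^\circ y)$.

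The final step is the trivial observation that when all entries of a matrix and of a vector lie in $\{0,-\infty\}$, then every coordinate of the max-plus product also lies in $\{0,-\infty\}$. Consequently, for such $y$, the equality $A^\circ y=B^\circ y$ is the same as $\supp(A^\circ y)=\supp(B^\circ y)$. This yields (1)$\Leftrightarrow$(3). Applying the entire argument verbatim to $A^\circ,B^\circ$ in place of $A,B$ (and noting that $(A^\circ)^\circ=A^\circ$) gives (2)$\Leftrightarrow$(3), and the three conditions are equivalent. There is no real obstacle here beyond keeping track of the purely combinatorial nature of the supports; the content of the proposition is that solvability of the associated Boolean system governs whether the Chebyshev distance $d_\infty(Ax,\lambda+Bx)$ can ever be finite.
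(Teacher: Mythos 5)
Your proof is correct, and it follows the paper's route for only half of the argument. The equivalence of statement~1 with statement~3 is proved in the paper exactly as you do it: via Theorem~\ref{rf=rg}, $s(\lambda)$ is the negated least Chebyshev distance between $Ax$ and $\lambda+Bx$, and finiteness amounts to the existence of a nontrivial $x$ with $\supp(Ax)=\supp(Bx)$, a condition independent of $\lambda$; your explicit reduction of this support condition to the Boolean system (passing to $x^{\circ}$, noting $\supp(Ax)=\supp(A^{\circ}x^{\circ})$, and that for $\{0,-\infty\}$ data equality of supports is equality of vectors) fills in a step the paper leaves implicit. Where you genuinely diverge is the equivalence of 1 and 2: the paper proves it directly from the inf-representation \eqref{e:low-env}, observing via the maximum cycle mean formula \eqref{mcmh} that $\rad(H_{\lambda}^{(p)})=-\infty$ exactly when the digraph of $H_{\lambda}^{(p)}$ has no cycle of finite weight, a property depending only on the support pattern of $A$ and $B$, hence unchanged when passing to $A^{\circ},B^{\circ}$. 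You instead get 2$\Leftrightarrow$3 for free by rerunning your support argument on the pair $(A^{\circ},B^{\circ})$ and using $(A^{\circ})^{\circ}=A^{\circ}$. Your route is more uniform and avoids the digraph/cycle-mean machinery altogether; the paper's route has the mild advantage of showing the pointwise statement that $s(\lambda)=-\infty$ iff $s^{\circ}(\lambda)=-\infty$ for each fixed $\lambda$ without first invoking the Chebyshev-distance interpretation, which is in the spirit of how $s^{\circ}$ is used later (Proposition~\ref{2ndboolean}). Both arguments share the same tacit conventions about the trivial vector and empty supports, which are harmless under the standing assumption that $A$ and $B$ have no common $-\infty$ columns.
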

\begin{proof}

$1.\Leftrightarrow 2:$ $s(\lambda)=-\infty$ if and only if there
exists $H_{\lambda}^{(p)}$ such that
$\rad(H_{\lambda}^{(p)}=-\infty$. By \eqref{mcmh}, this just means
that the associated digraph of $H_{\lambda}^{(p)}$ does not have
cycles with finite weight. This property does not depend on the
value of finite coefficients in $H_{\lambda}^{(p)}$ and hence
$s(\lambda)=-\infty$ if and only if $s^{\circ}(\lambda)=-\infty$.

\if{$3.\Leftrightarrow 4:$ We need only to show that if
$A^{\circ}x=B^{\circ}x$ has a solution, then it has a solution with
$\{0,-\infty\}$ entries. Let $x$ be a solution and denote
$t_k:=(A^{\circ}x)_k=(B^{\circ}x)_k$. If $t_k=-\infty$, then the
equality cannot be affected by any change of finite entries of $x$.
If $t_k$ is finite, then there exist indices $l$ and $m$ such that
$a_{kl}=b_{km}=0$, and setting all entries of $x$ to $0$ we obtain
$(A^{\circ}x)_k=(B^{\circ}x)_k=0$.}\fi

$3.\Leftrightarrow 1:$ As $s(\lambda)$ is equal to the inverse
minimal Chebyshev distance between $Ax$ and $Bx$, it is infinite if
and only if there is no nontrivial vector $x$ such that
$\supp(Ax)=\supp(Bx)$, which is the negation of 3.\eproof
\end{proof}

Condition 3. of Proposition~\ref{1stboolean} provides a criterion
for $s(\lambda)=-\infty$, which can be verified in polynomial time.
}
\fi

Let us indicate yet another consequence of the fact that
$r(H_{\lambda}^{(p)})$ and $s(\lambda)$ are piecewise-affine.

\begin{corollary}
If $\spec(A,B)$ is not empty, then it
is a finite system of closed intervals and points.
\end{corollary}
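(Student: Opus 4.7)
The plan is to read off the corollary directly from the previous one, which tells us that $s(\lambda)$ is a finite piecewise-affine Lipschitz function, combined with the two facts established earlier: $\spec(A,B)=\{\lambda\in\R\mid s(\lambda)=0\}$ and $s(\lambda)\leq 0$ for every $\lambda$. Thus $\spec(A,B)$ is the zero level set of a nonpositive piecewise-affine (hence continuous) function of one real variable.

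First I would fix a finite set of breakpoints $\lambda_1<\lambda_2<\dots<\lambda_N$ delimiting the affine pieces of $s(\lambda)$, i.e.\ on each of the closed intervals $I_0=(-\infty,\lambda_1]$, $I_k=[\lambda_k,\lambda_{k+1}]$ for $1\leq k\leq N-1$, and $I_N=[\lambda_N,+\infty)$ the restriction $s|_{I_k}$ is affine, $s(\lambda)=\alpha_k\lambda+\beta_k$. Because $s\leq 0$ everywhere, the zero set of this affine function on $I_k$ falls into exactly three cases: (i) empty, if $\alpha_k\lambda+\beta_k<0$ on the whole interval; (ii) a single point, if $\alpha_k\neq 0$ and the root $-\beta_k/\alpha_k$ lies in $I_k$; or (iii) the whole interval $I_k$, if $\alpha_k=\beta_k=0$. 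In particular, the zero set of $s$ restricted to $I_k$ is a closed interval (possibly degenerate or empty).

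Taking the union over $k=0,\dots,N$ yields $\spec(A,B)$ as a finite union of closed intervals and points. Adjacent pieces in case (iii) can be merged by continuity of $s$: if $s\equiv 0$ on $I_k$ and on $I_{k+1}$, then $I_k\cup I_{k+1}$ is again a closed interval contained in the spectrum. After performing such merges one obtains a finite collection of disjoint closed intervals and isolated points whose union is $\spec(A,B)$, which is exactly the assertion of the corollary.

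There is no real obstacle here beyond bookkeeping; the content is entirely contained in the preceding corollary and in Theorem~\ref{rf=rg} (through the inequality $s(\lambda)\leq 0$ and the characterization of the spectrum as its zero set). The only point to watch is that the statement allows unbounded intervals, which is consistent with cases (i)--(iii) applied to the two unbounded pieces $I_0$ and $I_N$; in fact the affine asymptotics of $s$ at $\pm\infty$ discussed earlier in the paper ensures that for typical data these extremal pieces contribute nothing, but this is not needed for the corollary as stated.
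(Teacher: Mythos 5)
Your argument is correct and follows essentially the same route as the paper, which states this corollary as an immediate consequence of $s(\lambda)$ being a finite piecewise-affine (Lipschitz) function with $\spec(A,B)$ as its zero level set; you simply spell out the case analysis on each affine piece, which the paper leaves implicit. Your remark that closed intervals may be unbounded is also consistent with the paper's statement, so nothing is missing.
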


Note that this also follows, by means of projection, from
a result by De Schutter and De Moor~\cite{SM-96} that the solution
set of a system of polynomial (in)equalities in the max-plus algebra
is a (finite) union of polyhedra. The method of De Schutter and De Moor can also offer an alternative (computationally expensive) way to determine the spectrum and the generalized eigenvectors.

Conversely, it is shown in \cite{Ser-note-10} that any system of closed intervals
and points in $\R$ can be represented as spectrum of $(A,B)$. See also Subsect.~\ref{ss:example}.

\subsection{Bounds on the spectrum of $(A,B)$}
\label{ss:bounds}

Next we recall a bound
on the spectrum obtained
by Cuninghame-Green and Butkovi\v{c} \cite{But:10,CGB-08},
extending it to the case when $A=(a_{ij})$ and
$B=(b_{ij})$ may have infinite entries.
Denote
\begin{equation}
\label{but-bounds}
\begin{split}
&\underline{D}(A,B)=
\mysup_{i\colon A_{i\cdot}\; \text{finite}} A_{i\cdot}\mydiv B_{i\cdot},\\
&\overline{D}(A,B)=
-\mysup_{i\colon B_{i\cdot}\; \text{finite}} B_{i\cdot}\mydiv A_{i\cdot}.
\end{split}
\end{equation}
We assume that $\mysup\emptyset=-\infty$ and $-\mysup\emptyset=+\infty$.

Since $A_{i\cdot}\mydiv B_{i\cdot}=
\max\{\gamma\mid A_{i\cdot}\geq \gamma+B_{i\cdot}\}$
is finite when $A_{i\cdot}$ is finite and $B_{i\cdot}$ is not
$\bzero$, we immediately see the following.

\begin{lemma}
\label{but-finite}
$\underline{D}(A,B)$ (resp.\ $\overline{D}(A,B)$)
is finite if and only if there exists an $i\in\{1,\ldots,m\}$
such that $A_{i\cdot}$ is finite (resp.\ $B_{i\cdot}$
is finite).
\end{lemma}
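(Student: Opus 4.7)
My plan is to prove both equivalences directly from the definition of $\mydiv$, exploiting the standing assumption (made at the beginning of Section~3) that $A$ and $B$ have no identically $-\infty$ rows. I will only write out the argument for $\underline{D}(A,B)$, since the case of $\overline{D}(A,B)$ is symmetric (swap the roles of $A$ and $B$ and flip the sign).

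First, for the ``only if'' direction, I would argue by contrapositive: if every row $A_{i\cdot}$ has some entry equal to $-\infty$ (in particular is not finite), then the indexing set in the definition of $\underline{D}(A,B)$ is empty, and so by the convention $\mysup\emptyset=-\infty$ we obtain $\underline{D}(A,B)=-\infty$, which is not finite.

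For the ``if'' direction, suppose there exists $i$ with $A_{i\cdot}$ finite. By formula~\eqref{projector},
\[
A_{i\cdot}\mydiv B_{i\cdot}=\myinf_{j=1}^n (a_{ij}-b_{ij}),
\]
with the convention $(-\infty)+(+\infty)=+\infty$. For each $j$, if $b_{ij}$ is finite then $a_{ij}-b_{ij}$ is finite (since $a_{ij}\in\R$), while if $b_{ij}=-\infty$ then $a_{ij}-b_{ij}=+\infty$. The standing assumption that $B$ has no $-\infty$ row guarantees that at least one index $j$ yields a finite contribution, so the minimum is taken over a nonempty finite subset of $\R$ and is therefore finite. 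Hence $A_{i\cdot}\mydiv B_{i\cdot}\in\R$ for every $i$ with $A_{i\cdot}$ finite. Since the supremum in \eqref{but-bounds} is then taken over a finite nonempty collection of real numbers (there are only $m$ rows), it is finite.

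There is essentially no obstacle here: the whole content of the lemma is a careful unpacking of the convention $(-\infty)+(+\infty)=+\infty$ in the definition of $\mydiv$ combined with the no-$(-\infty)$-rows assumption on $A,B$. The analogous argument for $\overline{D}(A,B)$ uses that $A$ has no $-\infty$ row to guarantee finiteness of $B_{i\cdot}\mydiv A_{i\cdot}$ whenever $B_{i\cdot}$ is finite, and the sign flip converts the convention $\mysup\emptyset=-\infty$ into $-\mysup\emptyset=+\infty$, covering the case where no row of $B$ is finite.
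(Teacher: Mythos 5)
Your proof is correct and follows essentially the same route as the paper, whose entire argument is the remark preceding the lemma that $A_{i\cdot}\mydiv B_{i\cdot}$ is finite whenever $A_{i\cdot}$ is finite and $B_{i\cdot}\neq\bzero$ (guaranteed by the standing no-$(-\infty)$-rows assumption), combined with the conventions $\mysup\emptyset=-\infty$ and $-\mysup\emptyset=+\infty$. You merely spell out the same observation in more detail, so there is nothing to add.
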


When $A$ and $B$ have finite entries only,
$\underline{D}(A,B)$ and $\overline{D}(A,B)$ are just like
the bounds of \cite[Theorem 2.1]{CGB-08}:
\begin{equation}
\label{but-orig-bounds}
\begin{split}
&\underline{D}(A,B)=
\mysup_i\myinf_j (a_{ij}-b_{ij}),\\
&\overline{D}(A,B)=\myinf_i \mysup_j (a_{ij}-b_{ij}).
\end{split}
\end{equation}
Note that $\underline{D}(A,B)$ and $\overline{D}(A,B)$ defined by \eqref{but-bounds} take infinite values
if  $A$ or $B$ do not contain any finite rows.

\begin{proposition}
\label{peter}
If $Ax\leq\lambda+Bx$ (resp.\ $Ax\geq\lambda+Bx$) has solution
$x>\bzero$, then $\lambda\geq\underline{D}(A,B)$
(resp.\ $\lambda\leq\overline{D}(A,B)$).
\end{proposition}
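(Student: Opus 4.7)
The plan is to reduce the matrix inequality $Ax\leq\lambda+Bx$ to single-coordinate inequalities, and then to exploit the fact that a max-plus sum is attained at some optimal index. Once the maximizer $j^*$ of $(Bx)_i=\max_j(b_{ij}+x_j)$ is fixed, the $i$-th row inequality shrinks to a bound on the single difference $a_{ij^*}-b_{ij^*}$, which automatically bounds the infimum $A_{i\cdot}\mydiv B_{i\cdot}=\inf_j(a_{ij}-b_{ij})$.

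For the first assertion, I would fix any $i$ with $A_{i\cdot}$ finite, take an $x>\bzero$ solving $Ax\leq\lambda+Bx$, and observe that $(Ax)_i$ is finite, so the row inequality forces $(Bx)_i$ to be finite as well; in particular $B_{i\cdot}$ must have at least one finite entry, and a maximizer $j^*$ of $b_{ij}+x_j$ has $b_{ij^*}$ finite. Then the chain
\[
a_{ij^*}+x_{j^*}\leq (Ax)_i\leq \lambda+(Bx)_i=\lambda+b_{ij^*}+x_{j^*}
\]
yields $a_{ij^*}-b_{ij^*}\leq\lambda$, so that $A_{i\cdot}\mydiv B_{i\cdot}\leq\lambda$. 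Taking the supremum over $i$ with $A_{i\cdot}$ finite gives $\underline{D}(A,B)\leq\lambda$.

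The reverse bound follows by a symmetric argument, now using the maximizer $j^*$ of $a_{ij}+x_j$ on the left-hand side: for $i$ with $B_{i\cdot}$ finite, either $b_{ij^*}=-\infty$ (giving a trivial bound $a_{ij^*}-b_{ij^*}=+\infty\geq\lambda$) or $b_{ij^*}$ is finite, in which case
\[
\lambda+b_{ij^*}+x_{j^*}\leq\lambda+(Bx)_i\leq (Ax)_i=a_{ij^*}+x_{j^*}
\]
forces $a_{ij^*}-b_{ij^*}\geq\lambda$. Either way $\sup_j(a_{ij}-b_{ij})\geq\lambda$, i.e., $-\,B_{i\cdot}\mydiv A_{i\cdot}\geq\lambda$, and taking the infimum over $i$ with $B_{i\cdot}$ finite yields $\overline{D}(A,B)\geq\lambda$.

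The argument is essentially mechanical; the only point that requires any care is the bookkeeping of $-\infty$ entries, namely checking that the finiteness of the relevant row (and of $x$) guarantees that the chosen $j^*$ lies in the support where the infimum or supremum is genuinely attained in $\R$, so that the dominance test used to define $A_{i\cdot}\mydiv B_{i\cdot}$ and $B_{i\cdot}\mydiv A_{i\cdot}$ reduces to the ordinary inequality $a_{ij^*}-b_{ij^*}\lessgtr\lambda$. No deeper tool beyond the definition~\eqref{projector} is needed.
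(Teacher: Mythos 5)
Your proof is correct and takes essentially the same row-by-row route as the paper, which simply states the contrapositive: if $a_{ij}>\lambda+b_{ij}$ for all $j$ (equivalently $A_{i\cdot}\mydiv B_{i\cdot}>\lambda$ with $A_{i\cdot}$ finite), then $Ax\leq\lambda+Bx$ can have no solution $x>\bzero$. Your direct version via the maximizing index $j^*$ performs the same comparison of $a_{ij}-b_{ij}$ with $\lambda$ read in the other direction, and is fine as written; note only that the case $b_{ij^*}=-\infty$ you guard against in the second half cannot occur, since there $B_{i\cdot}$ is assumed finite (the case needing a word is rather $A_{i\cdot}\equiv\bzero$, which your displayed chain rules out anyway for finite $\lambda$).
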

\begin{proof}
If there exists $i$ such that $a_{ij}>\lambda+b_{ij}$ for all
$j=1,\ldots,m$, then $Ax\leq\lambda+Bx$ cannot have solutions.
This condition is equivalent to $A_{i\cdot}\mydiv B_{i\cdot}>\lambda$
plus the finiteness of $A_{i\cdot}$. Taking supremum
of $A_{i\cdot}\mydiv B_{i\cdot}$ over $i$ such that $A_{i\cdot}$ is
finite yields $\underline{D}(A,B)$. This shows that
if $Ax\leq\lambda+Bx$
 then $\lambda\geq\underline{D}(A,B)$.
The remaining part follows analogously.\eproof
\end{proof}

The next result is an extension of \cite[Theorem 2.1]{CGB-08}.

\begin{corollary}
\label{c:butbounds}
$\spec(A,B)\subseteq
[\underline{D}(A,B),\overline{D}(A,B)]$.
\end{corollary}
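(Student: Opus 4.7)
The plan is to reduce the corollary to Proposition~\ref{peter} by passing to the support of a generalized eigenvector. Suppose $\lambda\in\spec(A,B)$ and let $x\neq\bzero$ be an eigenvector, so that $Ax=\lambda+Bx$. Put $J:=\supp(x)=\{j:x_j\neq\bzero\}$, and let $A_J,B_J\in\RR^{m\times|J|}$ denote the matrices obtained by retaining only the columns indexed by $J$, and $x_J$ the restriction of $x$ to $J$. Since the deleted columns contribute $\bzero$ to both $Ax$ and $Bx$, the restricted equation $A_Jx_J=\lambda+B_Jx_J$ still holds, and $x_J$ has all entries finite, i.e.\ $x_J>\bzero$.

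Applying Proposition~\ref{peter} to the two inequalities $A_Jx_J\leq\lambda+B_Jx_J$ and $A_Jx_J\geq\lambda+B_Jx_J$ then yields $\underline{D}(A_J,B_J)\leq\lambda\leq\overline{D}(A_J,B_J)$. To finish, I would establish the monotonicity under column restriction, $\underline{D}(A,B)\leq\underline{D}(A_J,B_J)$ and $\overline{D}(A_J,B_J)\leq\overline{D}(A,B)$. For the first: any row $i$ with $A_{i\cdot}$ finite also has $(A_J)_{i\cdot}$ finite, and for such rows the quantity $(A_J)_{i\cdot}\mydiv(B_J)_{i\cdot}=\inf_{j\in J}(a_{ij}-b_{ij})$ is taken over a subset of the indices defining $A_{i\cdot}\mydiv B_{i\cdot}=\inf_{j=1}^n(a_{ij}-b_{ij})$, hence is no smaller. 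Taking the supremum over rows where $A_{i\cdot}$ is finite (itself a subset of the rows where $(A_J)_{i\cdot}$ is finite) then gives $\underline{D}(A,B)\leq\underline{D}(A_J,B_J)$; the inequality for $\overline{D}$ is dual. Chaining the two pairs of inequalities produces $\underline{D}(A,B)\leq\lambda\leq\overline{D}(A,B)$, as required.

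I expect no real obstacle: the only conceptual step is the reduction to a fully finite eigenvector, which is needed to invoke Proposition~\ref{peter}, and after that only a routine check of how $\underline{D}$ and $\overline{D}$ behave under deletion of columns is required.
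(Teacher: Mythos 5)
Your proof is correct and follows the paper's intended route: the paper states Corollary~\ref{c:butbounds} as an immediate consequence of Proposition~\ref{peter}, with no separate argument, and your reduction is exactly that consequence. The extra step you add---restricting to the support $J$ of the eigenvector and checking that $\underline{D}$ and $\overline{D}$ only move inward under deletion of columns---correctly fills in the case of eigenvectors with $-\infty$ entries, which the paper leaves implicit; it could even be bypassed, since the proof of Proposition~\ref{peter} uses only the finiteness of the row $A_{i\cdot}$ (resp.\ $B_{i\cdot}$), so that argument applies verbatim to any eigenvector $x\neq\bzero$, not just $x>\bzero$.
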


We use identity
\eqref{CW1} to give a more precise bound. It will be assumed
that $A$ and $B$ do not have $\bzero$ columns. Note that
this condition is more restrictive than that $A$ and $B$ do not
have {\em common} $\bzero$ columns, and it cannot be assumed
without loss of generality.

\begin{theorem}
\label{p:specbounds1}
Suppose that $A=(a_{ij}),B=(b_{ij})\in\RR^{m\times n}$
do not have $\bzero$ columns. Then
\begin{equation}
\label{e:specbounds1}
\spec(A,B)\subseteq [-\rad(A^{\diez}B),\rad(B^{\diez}A)]
\subseteq [\underline{D}(A,B),\overline{D}(A,B)].
\end{equation}
\end{theorem}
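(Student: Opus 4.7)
The plan is to establish the two inclusions in~\eqref{e:specbounds1} separately, using the two Collatz-Wielandt identities from Theorem~\ref{Nussbaum} applied to the maps $A^{\diez}B$ and $B^{\diez}A$. Both are isotone, additively homogeneous and continuous self-maps of $\RRn$: the no-$\bzero$-column hypothesis on $A$ and $B$ guarantees that $A^{\diez}z$ and $B^{\diez}z$ contain no $+\infty$ entries when $z$ has none, so that $B^{\diez}A,A^{\diez}B:\RRn\to\RRn$ and Theorem~\ref{Nussbaum} applies.

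For the first inclusion, let $\lambda\in\spec(A,B)$ with eigenvector $x\in\RRn$, $x\not\equiv\bzero$, satisfying $Ax=\lambda+Bx$. From $Bx\leq -\lambda+Ax$ the residuation property~\eqref{res-prop} yields $x\leq B^{\diez}(-\lambda+Ax)=-\lambda+B^{\diez}Ax$, whence $\lambda+x\leq (B^{\diez}A)x$ and~\eqref{CW1} gives $\lambda\leq \rad(B^{\diez}A)$. Symmetrically, the converse inequality $Ax\leq\lambda+Bx$ leads to $-\lambda+x\leq(A^{\diez}B)x$, so $-\lambda\leq\rad(A^{\diez}B)$, i.e.\ $\lambda\geq -\rad(A^{\diez}B)$.

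For the second inclusion it suffices to show $\rad(B^{\diez}A)\leq \overline{D}(A,B)$; the matching bound $\rad(A^{\diez}B)\leq-\underline{D}(A,B)$, equivalent to $\underline{D}(A,B)\leq -\rad(A^{\diez}B)$, follows by exchanging the roles of $A$ and $B$ and noting that $\overline{D}(B,A)=-\underline{D}(A,B)$. If $\overline{D}(A,B)=+\infty$ the claim is vacuous by Lemma~\ref{but-finite}, so fix $i_0$ with $B_{i_0\cdot}$ finite and set $x\in\R^n$ by $x_j:=-b_{i_0 j}$, together with $\mu:=\max_j(a_{i_0 j}-b_{i_0 j})$. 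Then $(Ax)_{i_0}=\max_j(a_{i_0 j}-b_{i_0 j})=\mu$, and using the obvious pointwise bound $(B^{\diez}y)_j\leq -b_{i_0 j}+y_{i_0}$ with $y=Ax$,
\begin{equation*}
(B^{\diez}Ax)_j \;\leq\; -b_{i_0 j}+(Ax)_{i_0} \;=\; \mu+x_j
\end{equation*}
for every $j$. Identity~\eqref{CW2} then gives $\rad(B^{\diez}A)\leq\mu$, and taking the infimum over admissible $i_0$ yields $\rad(B^{\diez}A)\leq\overline{D}(A,B)$.

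The only nonroutine ingredient is the test vector $x_j=-b_{i_0 j}$ used in the last step: it is engineered precisely so that the $i_0$-th coordinate of $Ax$ attains the value $\mu$, allowing a single term of the min defining $(B^{\diez}Ax)_j$ to produce the required inequality. Everything else is straightforward residuation plus direct application of the two Collatz-Wielandt identities, and the infinite boundary cases of $\underline{D}$ and $\overline{D}$ are taken care of by Lemma~\ref{but-finite}.
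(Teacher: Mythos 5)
Your proof is correct. The first inclusion is argued exactly as in the paper: residuation of $Ax\leq\lambda+Bx$ and $\lambda+Bx\leq Ax$ followed by the Collatz--Wielandt identity \eqref{CW1}, with the no-$\bzero$-column hypothesis ensuring $A^{\diez}B$ and $B^{\diez}A$ map $\RRn$ to itself. For the second inclusion your route differs from the paper's: the paper takes $\lambda=\rad(B^{\diez}A)$, uses \eqref{CW1} to produce $y\not\equiv\bzero$ with $\lambda+y\leq B^{\diez}Ay$, hence $\lambda+By\leq Ay$, and then invokes Proposition~\ref{peter} (and argues symmetrically for the lower end), whereas you bypass Proposition~\ref{peter} entirely and give a direct \eqref{CW2} certificate, the test vector $x_j=-b_{i_0j}$ for each finite row $B_{i_0\cdot}$, which yields $\rad(B^{\diez}A)\leq -B_{i_0\cdot}\mydiv A_{i_0\cdot}$ and hence, minimizing over $i_0$, $\rad(B^{\diez}A)\leq\overline{D}(A,B)$; the lower bound then follows from the swap identity $\overline{D}(B,A)=-\underline{D}(A,B)$, which is indeed immediate from \eqref{but-bounds}. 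The two arguments encode the same row-wise inequality: your version is self-contained (it effectively reproves the content of Proposition~\ref{peter} through \eqref{CW2} with an explicit subsolution), which is a nice illustration of the Collatz--Wielandt machinery, while the paper's version is shorter in context because Proposition~\ref{peter} is already available. Your handling of the degenerate case $\overline{D}(A,B)=+\infty$ via Lemma~\ref{but-finite} is also fine.
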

\begin{proof}
Let $Ax=\lambda Bx$, then we also have
\begin{equation}
\label{equivs1}
\begin{split}
& Ax\leq\lambda+Bx\Leftrightarrow -\lambda+x\leq A^{\diez}Bx,\\
& \lambda+ Bx\leq Ax\Leftrightarrow \lambda+x\leq B^{\diez}Ax.
\end{split}
\end{equation}
As $A$ and $B$ do not have $\bzero$ columns so that
$A^{\diez}Bx$ and $B^{\diez}Ax$ do not have $+\infty$ entries,
we can use \eqref{CW1} to obtain from \eqref{equivs1} that
$\lambda\in [-\rad(A^{\diez}B),\rad(B^{\diez}A)]$. For
$\lambda=\rad(B^{\diez}A)$ we can find $y\neq\bzero$ such that
$\lambda+y\leq B^{\diez}Ay$ and hence $\lambda+By\leq Ay$.
Using Proposition \ref{peter} we obtain
$\lambda\leq\overline{D}(A,B)$. The remaining inequality
$\lambda\geq\underline{D}(A,B)$ can be obtained analogously.\eproof
\end{proof}
By comparison with the finer bounds $-\rad(A^\sharp B)$ and $\rad(B^\sharp A)$, the interest of the bounds of Butkovi\v{c} and Cuninghame-Green, $\underline{D}(A,B)$ and $\overline{D}(A,B)$, lies in their explicit character. However, these bounds become infinite when the matrices $A$ and $B$ do not have any finite rows. We next give different explicit bounds, which turn out to be finite as soon as $A$ and $B$ do not have any identically infinite columns.
\begin{proposition}
\label{specints}
We have
\[
\spec(A,B)\subseteq \bigcup_{1\leq i\leq n} [-(A^\sharp B0)_i, (B^\sharp A 0)_i] \enspace ,
\]
and so
\[
\spec(A,B)\subseteq [-\mysup_i (A^\sharp B0)_i, \mysup_i  (B^\sharp A 0)_i ],
\]
where $0$ is the $n$-vector of all $0$'s.
\end{proposition}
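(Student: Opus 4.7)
The plan is to normalize the eigenvector and then apply residuation together with monotonicity to capture the essence of the argument on a single index. Given $\lambda \in \spec(A,B)$ with eigenvector $x$, I would first rescale: since the eigenproblem is additively homogeneous, I can assume $x \leq 0$ componentwise while $x_i = 0$ for at least one index $i \in \{1,\ldots,n\}$ (which exists because $x$ is required to have a non-$-\infty$ entry). This choice of $i$ will be the same index appearing in both bounds.

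Next I would exploit the two inequalities hidden in $Ax = \lambda + Bx$. From $Ax \leq \lambda + Bx$ and the isotony of $B$, together with $x \leq 0$, we get
\[
Ax \;\leq\; \lambda + Bx \;\leq\; \lambda + B\, 0 .
\]
The residuation property \eqref{res-prop} (and the additive homogeneity of $A^{\sharp}$) then yields $x \leq A^{\sharp}(\lambda + B0) = \lambda + A^{\sharp} B 0$. Evaluating the $i$-th component, where $x_i = 0$, gives $0 \leq \lambda + (A^{\sharp} B 0)_i$, i.e. $-(A^{\sharp} B 0)_i \leq \lambda$. Symmetrically, from $\lambda + Bx \leq Ax \leq A\, 0$, residuation gives $x \leq -\lambda + B^{\sharp} A 0$, and the $i$-th component produces $\lambda \leq (B^{\sharp} A 0)_i$. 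Combining these two bounds at the \emph{same} coordinate $i$ shows $\lambda \in [-(A^{\sharp} B 0)_i,\,(B^{\sharp} A 0)_i]$, proving the first inclusion.

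The second inclusion follows at once: for any $i$, $-(A^{\sharp} B 0)_i \geq -\mysup_j (A^{\sharp} B 0)_j$ and $(B^{\sharp} A 0)_i \leq \mysup_j (B^{\sharp} A 0)_j$, so every interval in the union lies in $[-\mysup_i (A^{\sharp} B 0)_i,\,\mysup_i (B^{\sharp} A 0)_i]$.

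There is no serious obstacle here; the only point requiring mild care is to verify that the residuated quantities $A^{\sharp} B 0$ and $B^{\sharp} A 0$ are well-defined real vectors, which is ensured by the standing assumption that $A$ and $B$ have no identically-$-\infty$ column (so $B0,A0 \in \R^m$) together with the hypothesis that they have no $-\infty$ row (so $A^{\sharp}$ and $B^{\sharp}$ send finite vectors to finite vectors). The essence is simply that normalizing the eigenvector to have a zero coordinate converts the global eigenvalue equation into a pair of componentwise scalar bounds at that coordinate.
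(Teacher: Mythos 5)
Your proof is correct, but it takes a different route from the paper's. The paper's argument stays inside the level-set machinery it has just built: it applies the nonlinear Collatz--Wielandt formula~\eqref{CW2} with the test vector $0$ to get $\rad(h_\lambda)\leq\mysup_i[h_\lambda(0)]_i$, then uses the characterization $\lambda\in\spec(A,B)\Rightarrow \rad(h_\lambda)=0$ (Theorem~\ref{rf=rg}) to conclude that $[h_\lambda(0)]_i=(\lambda+(A^\sharp B0)_i)\wedge(-\lambda+(B^\sharp A0)_i)\geq 0$ for some $i$, which is exactly the pair of bounds at a single index. You instead bypass the spectral function and \eqref{CW2} entirely: you normalize an eigenvector so that $x\leq 0$ with $x_i=0$, and use isotonicity plus residuation~\eqref{res-prop} to land on the same two componentwise inequalities at that coordinate $i$ --- in effect you re-derive from scratch precisely the instance of the Collatz--Wielandt bound the paper invokes (your two inequalities amount to $0=x_i\leq[h_\lambda(0)]_i$). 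What the paper's route buys is a two-line proof given machinery it needs anyway, and it works uniformly through $h_\lambda$; what yours buys is a self-contained, elementary argument readable without Theorem~\ref{rf=rg} or the Nussbaum/Collatz--Wielandt apparatus. One small slip in your closing remark: you swapped rows and columns --- $A0,B0\in\R^m$ is guaranteed by the absence of $-\infty$ \emph{rows}, while $A^\sharp$ and $B^\sharp$ send finite vectors to finite vectors precisely when there are no $-\infty$ \emph{columns}. This is harmless, since with the paper's conventions an infinite value of $(A^\sharp B0)_i$ or $(B^\sharp A0)_i$ only makes the corresponding bound vacuously true, so the inclusion holds without any finiteness assumption.
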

\begin{proof}
Consider $x:=0$ and $\mu:=\mysup_i [h_\lambda(0)]_i$, so that
$h_\lambda(x)\leq \mu +x$. Then, the non-linear Collatz-Wielandt
formula~\eqref{CW2} implies that $r(h_\lambda)\leq \mu$. If
$\lambda\in \spec(A,B)$, we have $0\leq r(h_\lambda)$, and so, there
exists at least one index $i\in\{1,\ldots,n\}$ such that
\[
0\leq [h_\lambda(0)]_i =(\lambda + (A^\sharp B 0)_i )\wedge
(-\lambda + (B^\sharp A0)_i) \enspace .
\]
It follows that $\lambda \leq (B^\sharp A0)_i$ and $\lambda\geq - (A^\sharp B 0)_i$.\eproof
\end{proof}
\begin{remark}
It follows readily from the Collatz-Wielandt property~\eqref{CW2} that
\[
[-r(A^\sharp B),r(B^\sharp A)]\subseteq [-\mysup_i (A^\sharp B0)_i, \mysup_i  (B^\sharp A 0)_i ]
\]
\typeout{Give example showing that Peter and Ray's bound and our CW
bound are uncomparable. Example given.}
\end{remark}

\begin{figure}
\centering
\includegraphics[width=0.8\linewidth]{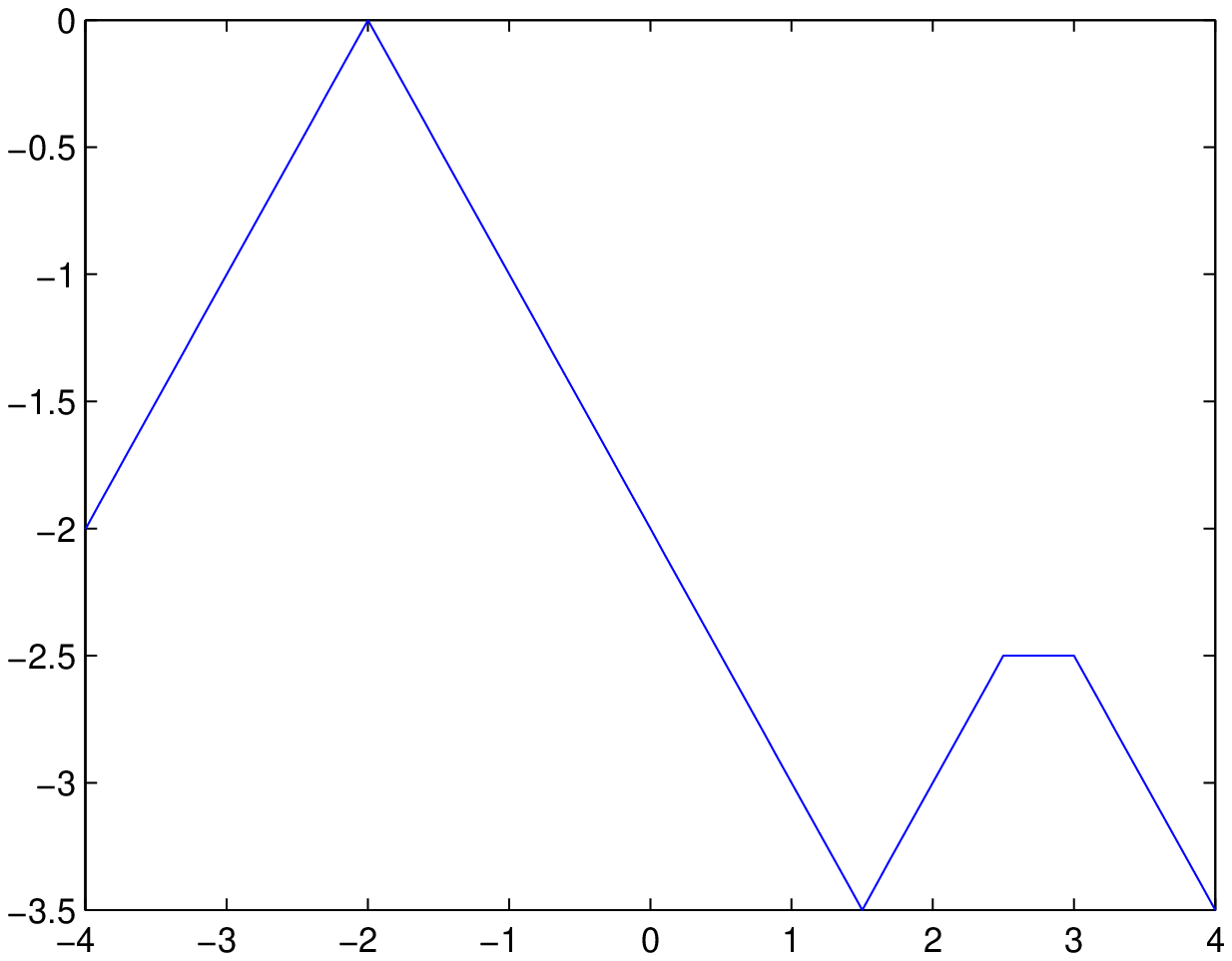}
\caption{Spectral function of \eqref{ABrand}}
\label{curioustest}
\end{figure}

\begin{example}
{\rm We next give an example, to compare the bounds of Corollary
\ref{c:butbounds}, Theorem~\ref{p:specbounds1} and
Proposition~\ref{specints}. Consider the following finite matrices
of dimension $3\times 4$:
\begin{equation}
\label{ABrand}
A =
\begin{pmatrix}
-2  &   3 &   -3  &  -3\\
-4  &   1  &   2  &  -2\\
5  &  -1   &  5   & -1
\end{pmatrix},\quad
B =
\begin{pmatrix}
-4   &  5 &   -3 &    3\\
 2  &   0  &  -1   &  4\\
 0   &  2  &  -3   & -1
\end{pmatrix}
\end{equation}
From the graph of spectral function, Figure~\ref{curioustest}, it
follows that the only eigenvalue is $-2$ since $s(-2)=0$ and $s(\lambda)<0$ for any
$\lambda\neq -2$. The interval
$[-r(A^{\diez}B),r(B^{\diez}A)]$ is in this case $[-2,0.5]$.  Bounds
\eqref{but-orig-bounds} of \cite[Theorem 2.1]{CGB-08} yield the
interval $[\underline{D}(A,B),\overline{D}(A,B)]=[-3,2]$, which is
less precise. Proposition~\ref{specints} yields the union of
intervals $[3,0]=\emptyset$, $[-2,-2]$, $[3,3]$ and $[-3,-2]$, thus
$[-3,-2]\cup \{3\}$. Note that these intervals are incomparable both
with $[-r(A^{\diez}B),r(B^{\diez}A)]$ and
$[\underline{D}(A,B),\overline{D}(A,B)]=[-3,2]$.}
\end{example}

We remark that the intervals $[-\mysup_i (A^\sharp B0)_i, \mysup_i
(B^\sharp A 0)_i ]$ and $[\underline{D}(A,B),\overline{D}(A,B)]$ are
also in general incomparable. Also, Subsect. \ref{ss:example} will
provide an example where the bounds $[-r(A^{\diez}B),r(B^{\diez}A)]$
are exact.
\begin{example}
Let us now illustrate the discrete event systems interpretation of the spectral
problem of the previous example. For readability, we replace the matrices
by 
\begin{equation}
\label{ABrand2}
A =
\begin{pmatrix}
-2  &   3 &   -\infty  &  -\infty\\
-\infty  &   1  &   2  &  -\infty\\
5  &  -\infty   &  5   & -1
\end{pmatrix},\quad
B =
\begin{pmatrix}
-\infty   &  5 &   -3 &    -\infty\\
 2  &   -\infty  &  -\infty   &  4\\
 0   &  2  &  -\infty   & -\infty
\end{pmatrix}
\end{equation}
This pair of matrices can be shown to have the same spectral function (Figure~\ref{curioustest}) as the previous one, and the same bounds $[-r(A^{\diez}B),r(B^{\diez}A)]$. Consider now the two discrete event
systems
\[
y=A x,\qquad z=Bx \enspace .
\]
Here, $x_i$ is interpreted as the starting time of a task $i$,
and $y_i$ and $z_i$ are interpreted as output time. This is illustrated
in Figure~\ref{fig-new}. For instance, the constraint $y_1=\max(-2+x_1,3+x_2)$
in $y=Ax$ expresses that the first output is released at the earliest,
given that it must wait $3$ time units after the second input becomes available,and can not be released more than $2$ time units before the first input becomes
available. We are looking for a common input $x$ such that the time separation
between events is the same for both outputs, so that
\[
y_i-y_j=z_i-z_j,\qquad \forall i,j \enspace .
\]
This can be solved by finding an eigenvector $x$, so that $Ax=\lambda+ Bx$. 
By inspection of the spectral function in Figure~\ref{curioustest}, we see
that $\lambda$ must be equal to $-2$. Then, computing $x$ reduces to solving a mean payoff game (see the discussion in section~\ref{ss:MPG} below for more algorithmic background). 
In this special example, $x$ can be determined very simply by running
the power type algorithm (like the alternating method of~\cite{CGB-03})
\[
x^{(0)}=(0,0,0,0)^T,\qquad x^{(k+1)}=h_{-2}(x^{(k)})
\]
where
\[
h_{-2}(x):=(-2+A^{\sharp}Bx)\enspace\wedge\enspace
(2+B^{\sharp}Ax) \enspace ,
\]
until the sequence $x^k$ converges. Actually,
\[
x^{(2)}=x^{(3)}=(-5,0,-5,-1)^T\enspace,
\]
and it can be checked that
\[
Ax^{(2)}=-2+Bx^{(2)} = (3,1,0)^T \enspace .
\]
\begin{figure}
\begin{tabular}{ccc}
\input{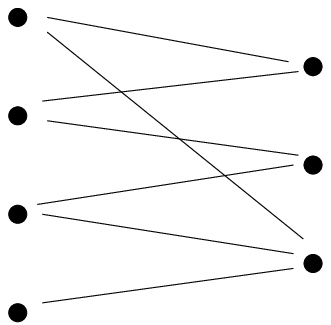}&\ \ \ \ \ \ \ \ \ \ \ \ \ \ \ \ \ &
\input{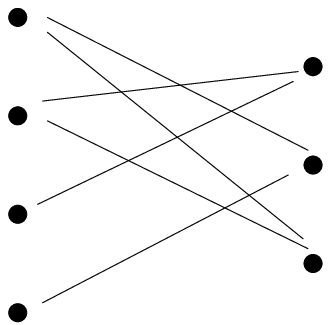}
\end{tabular}
\caption{Finding a common input making the outputs of two discrete event systems
indistinguishable, modulo a constant}
\end{figure}
\label{fig-new}
\end{example}

\subsection{Asymptotics of the spectral function}
If $A$ and $B$ do not have $\bzero$ columns,
the functions $\lambda+ A^{\diez}B$ and $-\lambda +B^{\diez}A$
are represented as infima of all max-linear mappings
$K_{\lambda}^{(p)}$ and, respectively, $M_{\lambda}^{(s)}$
such that
\begin{equation}
\label{KMdef}
\begin{split}
(K_{\lambda}^{(p)})_{i\cdot}&=
\lambda  - a_{ki}  + B_{k\cdot},
\quad 1\leq k\leq n,\ a_{ki}\neq\bzero,\\
(M_{\lambda}^{(s)})_{i\cdot}&=
-\lambda  - b_{ki}  + A_{k\cdot},
\quad 1\leq k\leq n,\ b_{ki}\neq\bzero.
\end{split}
\end{equation}
This representation satisfies the selection property.

Matrices $K_{\lambda}^{(p)}$ and $M_{\lambda}^{(s)}$ are both
instances of $H_{\lambda}^{(p)}$ which represent $h_{\lambda}$. 
We will need the following observation on $\rad(H_{\lambda}^{(p)})$

\begin{lemma}
\label{l:maxlength}
Denote $\maxlength:=\min(2m,n)$. The spectral radii $\rad(H_{\lambda}^{(p)})$ can be
expressed as $\lambda s/l+\alpha$, where $0\leq |s|\leq
l\leq \maxlength$, and $\alpha \leq\Delta(A,B)$, where
\begin{equation}
\label{Delta-def}
\Delta(A,B):=
\bigvee_{i,j,k\colon a_{ij}\neq\bzero,\; b_{ik}\neq\bzero} (a_{ij}  - b_{ik})
\vee
\bigvee_{i,j,k\colon b_{ij}\neq\bzero,\; a_{ik}\neq\bzero} (b_{ij}  - a_{ik}).
\end{equation}
Moreover it is only possible
that $s=l-2t$ for $t=0,\ldots,l$.  
\end{lemma}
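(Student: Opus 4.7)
The plan is to unpack the structure of $H_\lambda^{(p)}$ from \eqref{flambdarows}. Each row $i$ of $H_\lambda^{(p)}$ is specified by a \emph{template} $(X,k)$ with $X\in\{A,B\}$ and $k\in\{1,\ldots,m\}$, so that every entry has the form
\[(H_\lambda^{(p)})_{ij}=\epsilon_X\lambda-Y_{ki}+X_{kj},\]
where $\epsilon_B=+1$, $\epsilon_A=-1$, and $Y$ denotes the matrix in $\{A,B\}\setminus\{X\}$. Since $\rad(H_\lambda^{(p)})$ equals the maximum cycle mean \eqref{mcmh}, which is attained on a simple cycle, we get $l\leq n$ for free. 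For such a simple cycle $i_1\to\cdots\to i_l\to i_1$ with templates $(X_j,k_j)$, the total weight is $\sum_j(\epsilon_{X_j}\lambda+c_j)=s\lambda+\alpha'$, where $c_j=-Y_{k_j,i_j}+X_{k_j,i_{j+1}}$ and $s=\#\{j:X_j=B\}-\#\{j:X_j=A\}$. Writing $t$ for the number of indices with $X_j=A$ gives $s=l-2t$ with $t\in\{0,\ldots,l\}$, which confirms the asserted form $\rad(H_\lambda^{(p)})=s\lambda/l+\alpha$ with $\alpha=\alpha'/l$ and $|s|\leq l$.

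The bound $\alpha\leq\Delta(A,B)$ is then immediate: each $c_j$ is a difference of finite entries of the form $b_{k_j,i_{j+1}}-a_{k_j,i_j}$ or $a_{k_j,i_{j+1}}-b_{k_j,i_j}$ (finiteness follows because a weight-maximizing cycle avoids $\bzero$-weight edges, and such cycles exist since $A$ and $B$ have no $\bzero$ rows). Each such difference is bounded by $\Delta(A,B)$ by \eqref{Delta-def}, so averaging over $l$ edges gives $\alpha\leq\Delta(A,B)$.

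The substantive remaining step is proving $l\leq 2m$, for which I use a splitting reduction. Suppose the simple cycle uses the same template $(X,k)$ at two distinct positions $j<j'$. Split it into
\[C_1:\;i_1\to\cdots\to i_j\to i_{j'+1}\to\cdots\to i_l\to i_1,\qquad C_2:\;i_{j+1}\to\cdots\to i_{j'}\to i_{j+1},\]
both of which are cycles in the digraph of $H_\lambda^{(p)}$ because the rerouted outgoing edges from $i_j$ and $i_{j'}$ still use the common template $(X,k)$. Their lengths $l-(j'-j)$ and $j'-j$ sum to $l$, and their total weight equals the weight of the original cycle, the change reducing to the trivial identity
\[(-Y_{k,i_j}+X_{k,i_{j+1}})+(-Y_{k,i_{j'}}+X_{k,i_{j'+1}})=(-Y_{k,i_j}+X_{k,i_{j'+1}})+(-Y_{k,i_{j'}}+X_{k,i_{j+1}}),\]
while the $\lambda$-contributions match automatically since both swapped edges carry sign $\epsilon_X$. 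Hence one of $C_1,C_2$ has mean $\geq$ that of the original, is strictly shorter, and remains simple (its column set is a subset of the original columns). Iterating, the maximum cycle mean is attained on a simple cycle using each template at most once, hence of length at most $2m$. The main subtlety lies in certifying that the swap genuinely produces cycles of the same graph $H_\lambda^{(p)}$, which is guaranteed precisely by the shared-template hypothesis; once this is clear the rest is bookkeeping.
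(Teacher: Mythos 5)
Your proposal is correct and follows essentially the same route as the paper: represent $\rad(H_{\lambda}^{(p)})$ as the mean of an elementary cycle (giving $l\leq n$, the slope bookkeeping $s=l-2t$, and $\alpha\leq\Delta(A,B)$ by averaging), and then prove $l\leq 2m$ by splitting a cycle that repeats a row template $(X,k)$ into two shorter cycles of the same digraph with the same total weight. Your verification that the rerouted edges are genuine (finite) entries of $H_{\lambda}^{(p)}$ is exactly the point the paper makes when it notes that $k_1$ being chosen by $i_r$ ensures all elements $t_{i_r j}^{k_1}$ are entries of $H^{(p)}$, so no gap remains.
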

\begin{proof}
According to~\eqref{mcmh} and~\eqref{flambdarows}, $\rad(H_{\lambda}^{(p)})$ is a cycle mean of the form
\begin{equation}
\label{hmean}
(t_{i_1i_2}^{k_1}+\ldots+t_{i_li_1}^{k_l})/l
\end{equation}
where we use the notation $t_{ij}^{k}:=-a_{ki}+\lambda+b_{kj}$ and 
$t_{ij}^{n+k}=-\lambda-b_{ki}+a_{kj}$ for $i,j=1,\ldots,n$ and $k=1,\ldots,m$. 
Actually $t_{ij}^{k}$ is the $(i,j)$ entry of $H_{\lambda}^{(p)}$, but here we also need the intermediate index $k$. Note that it is determined by $i$.
  
In~\eqref{hmean}, only even numbers of $\pm\lambda$ can be cancelled, hence it can be expressed as
$\lambda s/l+\alpha$ where $0\leq |s|\leq
l$ with $s=l-2t$ for $t=0,\ldots,l$. The cycle $(i_1,\ldots,i_l)$ is elementary, hence $l\leq n$.
We also obtain $\alpha \leq\Delta(A,B)$ since the arithmetic mean does not exceed maximum.

It remains to show that $l\leq 2m$. Indeed if $l>2m$ then there is an upper index which appears at least twice in~\eqref{hmean}. Assume w.l.o.g. that this is $k_1$. Then the sum in~\eqref{hmean} takes one of the
following forms:
\begin{equation}
\label{hmean1}
\begin{split}
&-a_{k_1i_1}+[\lambda+b_{k_1i_2}+\ldots-a_{k_1i_r}]+\lambda+b_{k_1i_{r+1}}+\ldots,\\
&-\lambda-b_{k_1i_1}+[a_{k_1i_2}+\ldots-\lambda-b_{k_1i_r}]+a_{k_1i_{r+1}}+\ldots
\end{split}
\end{equation}
Assume w.l.o.g. that we have the first one. Then we can split it into the following two cycles, as indicated by the square bracket in the first line of~\eqref{hmean1}:
\begin{equation}
\label{twocycles}
\begin{split}
&t_{i_ri_2}^{k_1}+t_{i_2i_3}^{k_2}\ldots+t_{i_{r-1}i_r}^{k_{r-1}},\\
&t_{i_1i_{r+1}}^{k_1}+t_{i_{r+1}i_{r+2}}^{k_{r+1}}\ldots+t_{i_li_1}^{k_l}.
\end{split}
\end{equation}
Each of these forms is a weight of a cycle in $H_{\lambda}^{(p)}$. Indeed, \eqref{hmean1} (the first expression)
indicates that $k_1$ is chosen by $i_r$ in $H_{\lambda}^{(p)}$ so that any element $t_{i_rj}^{k_1}$ for
$j=1,\ldots,n$ is an entry of $H^{(p)}$. All other elements in~\eqref{twocycles} are also entries of $H^{(p)}$. 

The arithmetic mean for both of the cycles in~\eqref{twocycles}
has to be equal to~\eqref{hmean}, if this is indeed $\rad(H_{\lambda}^{(p)})$.
This shows $l\leq 2m$.\eproof
\end{proof}

We also define
\begin{equation}
\label{Cdef}
\begin{split}
\overline{C}(A,B)&:=
\bigvee_{i,j,k\colon a_{ij}\neq\bzero,\; b_{ik}\neq\bzero} (a_{ij} - b_{ik}),\\
\underline{C}(A,B)&:=
\bigwedge_{i,j,k\colon a_{ik}\neq\bzero,\; b_{ij}\neq\bzero} (a_{ik}-b_{ij}).
\end{split}
\end{equation}

We now study
the asymptotics of $s(\lambda)$, both in general
case and in some special cases.

\begin{theorem}
\label{t:asymp}
Suppose that $A,B\in\RR^{m\times n}$ and denote $\maxlength:=\min(2m,n)$.
\begin{itemize}
\item[1.]  
There exist $k_1$, $l_1$, $k_2$, $l_2$ such that
$0\leq l_1\leq \maxlength$, $k_1=l_1-2t_1$ where $0\leq t_1\leq\lfloor
l_1/2\rfloor$, $0\leq l_2\leq \maxlength$, $k_2=l_2-2t_2$ where $0\leq
t_2\leq\lfloor l_2/2\rfloor$, and $\alpha_1,\alpha_2\in\R$ such
that
\begin{equation}
\label{asyunu}
\begin{split}
s(\lambda)&=\lambda k_1/l_1+\alpha_1,\quad
\text{if $\lambda\leq -2\maxlength^2\mytimes\Delta(A,B)$},\\
s(\lambda)&=-\lambda k_2/l_2+\alpha_2.\quad
\text{if $\lambda\geq 2\maxlength^2\mytimes\Delta(A,B)$}.
\end{split}
\end{equation}
\item[2.] Suppose that $A$ and $B$ do not have
$\bzero$ columns. 
Then there exist $\alpha_1\leq \rad(A^{\diez}B)$
and $\alpha_2\leq \rad(B^{\diez}A)$ such that
\begin{equation}
\label{asydoi}
\begin{split}
s(\lambda)&=\lambda+\alpha_1,\quad
\text{if $\lambda\leq -2\maxlength\mytimes\Delta(A,B)$},\\
s(\lambda)&=-\lambda+\alpha_2,\quad
\text{if $\lambda\geq 2\maxlength\mytimes\Delta(A,B)$}.
\end{split}
\end{equation}
\item[3.] Suppose that $A$ and $B$ are real.
Then
\begin{equation}
\label{asytrei}
\begin{split}
s(\lambda)&=\lambda+\rad(A^{\diez}B),\quad
\text{if $\lambda\leq\underline{C}(A,B)$},\\
s(\lambda)&=-\lambda+\rad(B^{\diez}A),\quad
\text{if $\lambda\geq\overline{C}(A,B)$}.
\end{split}
\end{equation}
\end{itemize}
\end{theorem}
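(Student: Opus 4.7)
The plan is to exploit the representation $s(\lambda)=\min_p\rad(H_\lambda^{(p)})$, which holds by Proposition~\ref{p:specrad-rep} applied to the min--max function $h_\lambda$, together with Lemma~\ref{l:maxlength}: the latter expresses each $\rad(H_\lambda^{(p)})$ as a pointwise maximum of cycle-mean functions $\lambda s/l+\alpha$ with $|s|\leq l\leq\maxlength$, $s\equiv l\pmod 2$, and $|\alpha|\leq\Delta(A,B)$. Consequently $s(\lambda)$ is a finite piecewise-affine function whose slopes lie in the finite set $\mathcal{S}=\{s/l:|s|\leq l\leq\maxlength,\ s\equiv l\pmod 2\}\subseteq[-1,1]$ and whose intercepts are bounded by $\Delta(A,B)$ in absolute value; it therefore suffices to identify the leftmost and rightmost affine pieces and to bound where the transitions occur.

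For Part~1, any two distinct elements of $\mathcal{S}$ differ by at least $1/\maxlength^2$, since their difference $|sl'-s'l|/(ll')$ has nonzero integer numerator and denominator at most $\maxlength^2$. Since intercepts differ by at most $2\Delta(A,B)$, two affine pieces of $s(\lambda)$ with different slopes can only cross at $|\lambda|\leq 2\maxlength^2\Delta(A,B)$, forcing $s(\lambda)$ to coincide with a single affine piece outside this window. The restrictions $k_i/l_i\in[0,1]$ follow from $s(\lambda)\leq 0$ (Theorem~\ref{rf=rg}): a negative asymptotic slope at $-\infty$ (or positive at $+\infty$) would send $s$ to $+\infty$.

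For Part~2, the matrices $K_\lambda^{(p)}$ and $M_\lambda^{(s)}$ of~\eqref{KMdef} are particular instances of $H_\lambda^{(p)}$, with $\rad(K_\lambda^{(p)})=\lambda+\rad(K^{(p)})$ and $\rad(M_\lambda^{(s)})=-\lambda+\rad(M^{(s)})$. Minimising over $p,s$ and applying Proposition~\ref{p:specrad-rep} to $A^\sharp B$ and $B^\sharp A$ yields
\[
s(\lambda)\leq \lambda+\rad(A^\sharp B),\qquad s(\lambda)\leq -\lambda+\rad(B^\sharp A).
\]
If the asymptotic slope $k_1/l_1$ of $s$ at $-\infty$ were strictly less than $1$, the difference $s(\lambda)-\lambda=\lambda(k_1/l_1-1)+\alpha_1$ would tend to $+\infty$, contradicting the first bound; combined with $k_1/l_1\leq 1$ this forces $k_1/l_1=1$, and $\alpha_1\leq\rad(A^\sharp B)$ is immediate from the same bound. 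The symmetric argument handles $+\infty$. The sharper threshold $2\maxlength\Delta$ improves Part~1 because the next element of $\mathcal{S}$ below~$1$ is at most $(l-2)/l\leq 1-2/\maxlength$, giving a slope gap $\geq 2/\maxlength$ rather than $1/\maxlength^2$.

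For Part~3, the key is a pointwise row-swap. Suppose $\lambda\leq\underline{C}(A,B)$ and $H_\lambda^{(p)}$ contains a $(B)$-row at position $i$ with parameter $k$; since $A,B$ are real the corresponding $(A)$-row at $i$ with the same $k$ is valid, and the difference of the $(A)$-entry and the $(B)$-entry at column $j$ equals
\[
(\lambda-a_{ki}+b_{kj})-(-\lambda-b_{ki}+a_{kj}) = 2\lambda-(a_{ki}-b_{ki})-(a_{kj}-b_{kj}).
\]
Each of $a_{ki}-b_{ki}$ and $a_{kj}-b_{kj}$ appears in the minimum defining $\underline{C}(A,B)$, so $\lambda\leq\underline{C}(A,B)$ bounds $\lambda$ above by each, making the expression nonpositive. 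Iterating these swaps over all $(B)$-rows only decreases entries and therefore the max cycle mean, producing a pure-$(A)$ matrix $K_\lambda^{(q)}$ with $\rad(H_\lambda^{(p)})\geq\rad(K_\lambda^{(q)})=\lambda+\rad(K^{(q)})\geq\lambda+\rad(A^\sharp B)$. Minimising over $p$ yields $s(\lambda)\geq\lambda+\rad(A^\sharp B)$; together with the Part~2 upper bound this gives equality, and the $+\infty$ case is symmetric. I expect the main subtlety to lie in Part~2, where the exact asymptotic slope of $s$ must be pinned down by combining the upper bound with the slope constraint from Lemma~\ref{l:maxlength}; the row-swap of Part~3 is elementary once the pointwise inequality above is recognised.
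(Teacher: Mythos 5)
Your proposal is correct, and for Parts 1 and 2 it is essentially the paper's argument: every affine piece of $s(\lambda)$ is a cycle-mean function $\lambda k/l+\alpha$ with $|k|\leq l\leq\maxlength$ and $|\alpha|\leq\Delta(A,B)$ (Lemma~\ref{l:maxlength}), so breakpoints are confined to $|\lambda|\leq 2\maxlength^2\Delta(A,B)$, and the pure $K_\lambda^{(p)}$, $M_\lambda^{(s)}$ matrices give the bounds $s(\lambda)\leq\lambda+\rad(A^{\diez}B)$ and $s(\lambda)\leq-\lambda+\rad(B^{\diez}A)$. In Part 2 you actually tighten the paper: the paper asserts that the slope-$1$ piece ``dominates at small $\lambda$'' and takes the minimal slope gap to be $1/\maxlength$ (via a slope $(\maxlength-1)/\maxlength$, which the parity constraint of Lemma~\ref{l:maxlength} in fact excludes), whereas you pin the asymptotic slope down rigorously from the upper bound $s(\lambda)-\lambda\leq\rad(A^{\diez}B)$ and use the parity-forced gap $2/\maxlength$, giving crossings within $|\lambda|\leq\maxlength\,\Delta(A,B)$, which implies the stated threshold since $\Delta(A,B)\geq 0$. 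Part 3 is where you genuinely diverge: the paper observes that for $\lambda<\underline{C}(A,B)$ all entries of the $K$-type rows are negative while all entries of the $M$-type rows are positive, so any representing matrix using an $M$-row contains a positive diagonal loop and cannot attain the (negative) minimum, forcing $s(\lambda)=\min_p\rad(K^{(p)}_\lambda)=\lambda+\rad(A^{\diez}B)$; you instead prove the entrywise inequality (the $K$-row with the same index $k$ lies below the $M$-row when $\lambda\leq\underline{C}(A,B)$) and obtain the matching lower bound $s(\lambda)\geq\lambda+\rad(A^{\diez}B)$ by row replacement and monotonicity of the maximum cycle mean. Both mechanisms are elementary and yield the same thresholds; the paper's sign argument is self-contained, while your swap argument needs the Part 2 upper bound to close the sandwich, but it has the mild advantage of exhibiting an explicit pure-$K$ matrix dominated by any mixed representing matrix.
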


\begin{proof}
1: For the proof of this part, we observe that for each $\lambda$,
the function $s(\lambda)$ is the maximum cycle mean of a
representing matrix $H_{\lambda}^{(p)}$, so that it equals $\lambda
k/l+\alpha$ where $0\leq l\leq \maxlength$, $k=l-2t$ where $0\leq t\leq l$.
For any two such terms, difference between coefficients $k/l$ is not
less than $1/\maxlength^2$, and the difference between the offsets does not
exceed $2\Delta(A,B)$, which yields that all intersection points
must be in the interval
$[-2\maxlength^2\mytimes\Delta(A,B),2\maxlength^2\mytimes\Delta(A,B)]$.  Thus $s(\lambda)$
is just one affine piece for $\lambda\leq -2\maxlength^2\mytimes\Delta(A,B)$ and for
$\lambda\geq 2\maxlength^2\mytimes\Delta(A,B)$. As $s(\lambda)\leq 0$ for all
$\lambda$, the left asymptotic slope is nonnegative, and the
right asymptotic slope is non-positive.\\
2: When $A$ does not have $\bzero$ columns, some of the matrices
$H_{\lambda}^{(p)}$ are of the form $K_{\lambda}^{(p)}$ and their
maximum cycle mean is $\lambda+\alpha$. Taking minimum over all
$\rad(H_{\lambda}^{(p)})$ of the form $\lambda+\alpha$ yields an offset
$\alpha_1\leq\rad(A^{\diez}B)$. The cycle mean $\lambda+\alpha_1$
will dominate at small $\lambda$, and the smallest intersection
point may occur with a term $\lambda(\maxlength-1)/\maxlength +\alpha'_1$. Indeed, the
difference between coefficients is precisely the smallest possible
$1/\maxlength$, and the difference $|\alpha_1-\alpha'_1|$ may be up to
$2\Delta(A,B)$. This yields the bound $-2\maxlength\mytimes\Delta(A,B)$. An
analogous argument follows when $\lambda$ is large
and $B$ does not have $\bzero$ columns.\\
3: When $A$ and $B$ are real and $\lambda<\underline{C}(A,B)$, all
coefficients in the min-max function $\lambda+A^{\diez}B$ are real
negative, and all coefficients in the min-max function $-\lambda
+B^{\diez}A$ are real positive. This implies that $s(\lambda)$ is
equal to the minimum over $\rad(K_{\lambda}^{(p)})$, which is equal
to $\lambda+\rad(A^{\diez}B)$. An analogous argument follows when
$\lambda>\overline{C}(A,B)$.\eproof
\end{proof}

In Proposition~\ref{amlbml} we will show by an explicit construction
that any slope $k/l$ can be realized as asymptotics of a spectral
function.

We next observe that the asymptotics of $s(\lambda)$ can be read off
from the spectral function $s^{\circ}(\lambda)$, which we introduce below.
For arbitrary $C=(c_{ij})\in\Rmax^{m\times n}$ define
\begin{equation}
\label{abcirc}
c_{ij}^{\circ}=
\begin{cases}
0, & \text{if $c_{ij}\in\R$},\\
-\infty, & \text{if $c_{ij}=-\infty$}.
\end{cases}.
\end{equation}
Let $s^{\circ}(\lambda)$ be the spectral function of the eigenproblem
$A^{\circ}x=\lambda+B^{\circ}x$. 
\begin{proposition}
\label{2ndboolean} Suppose that $A,B\in\Rmax^{m\times n}$ and
that $\lambda k_1/l_1$ where $k_1,l_1\geq 0$ (resp. $-\lambda
k_2/l_2$ where $k_2,l_2\geq 0$) is the left (resp. the right)
asymptotic slope of $s(\lambda)$. Then
\begin{equation}
\label{sclambda}
s^{\circ}(\lambda)=
\begin{cases}
\lambda k_1/l_1, & \text{if $\lambda\leq 0$},\\
-\lambda k_2/l_2, & \text{if $\lambda\geq 0$}.
\end{cases}
\end{equation}
\end{proposition}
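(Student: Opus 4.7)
The plan is to compute $s^{\circ}(\lambda)$ directly from the min-max representation \eqref{flambda2}--\eqref{flambdarows} applied to $(A^{\circ},B^{\circ})$, and then to match the resulting slopes with the asymptotic slopes of $s(\lambda)$ furnished by Theorem~\ref{t:asymp}. The key simplification is that, since every finite entry of $A^{\circ}$ and $B^{\circ}$ equals $0$, every finite entry of $H_{\lambda}^{\circ,(p)}$ is either $\lambda$ or $-\lambda$. Consequently every cycle of length $l$ in $H_{\lambda}^{\circ,(p)}$ has mean of the pure form $\lambda s_c^{(p)}$, where $s_c^{(p)}=(l-2t)/l$ as in Lemma~\ref{l:maxlength}, with no additive offset. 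Crucially, this slope coincides with the slope of the same cycle in $H_{\lambda}^{(p)}$, since both are determined purely by the combinatorial pattern of $\pm\lambda$ contributions along the cycle.

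First I would compute $\rad(H_{\lambda}^{\circ,(p)})$ case by case: for $\lambda\geq 0$, the maximum cycle mean $\max_c \lambda s_c^{(p)}$ equals $\lambda\cdot\max_c s_c^{(p)}$, while for $\lambda\leq 0$ the ordering reverses and it equals $\lambda\cdot\min_c s_c^{(p)}$. Applying Proposition~\ref{p:specrad-rep} to take the infimum over $p$, and tracking one further $\min/\max$ reversal when $\lambda\leq 0$, one obtains
\begin{equation*}
s^{\circ}(\lambda)=\lambda\cdot\max_p\min_c s_c^{(p)}\ \text{for $\lambda\leq 0$},\qquad
s^{\circ}(\lambda)=\lambda\cdot\min_p\max_c s_c^{(p)}\ \text{for $\lambda\geq 0$}.
\end{equation*}
In particular $s^{\circ}(0)=0$, which is consistent with both affine pieces of $s^\circ$ passing through the origin (as they must, by part 1 of Theorem~\ref{t:asymp} applied to $(A^{\circ},B^{\circ})$, for which $\Delta(A^{\circ},B^{\circ})=0$).

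The main step, and the expected obstacle, is to identify these combinatorial min-max slopes with the asymptotic slopes $k_1/l_1$ and $-k_2/l_2$ of $s(\lambda)$. For each $p$, the matrix $H_{\lambda}^{(p)}$ shares the same cycle structure and the same slopes $s_c^{(p)}$ as $H_{\lambda}^{\circ,(p)}$; only the additive offsets differ, and they are bounded in absolute value by $\Delta(A,B)$. A careful limiting argument then shows that $\rad(H_{\lambda}^{(p)})=\lambda\min_c s_c^{(p)}+O(1)$ as $\lambda\to-\infty$, so that taking the infimum over $p$ yields $s(\lambda)=\lambda\cdot\max_p\min_c s_c^{(p)}+O(1)$; comparison with the affine asymptote $\lambda k_1/l_1+\alpha_1$ provided by Theorem~\ref{t:asymp} forces $k_1/l_1=\max_p\min_c s_c^{(p)}$. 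The symmetric argument as $\lambda\to+\infty$ identifies $-k_2/l_2=\min_p\max_c s_c^{(p)}$. Substituting both identifications into the expression above for $s^{\circ}(\lambda)$ yields the claimed formula. The delicate point is the rigorous handling of the $\min/\max$ reversals combined with the bounded-offset $O(1)$ remainders, in particular checking that a cycle of strictly negative slope in some $H_{\lambda}^{(p)}$ cannot spuriously contribute to the asymptotic slope (it is automatically excluded because the corresponding $\rad(H_{\lambda}^{(p)})$ tends to $+\infty$, while $s(\lambda)\leq 0$).
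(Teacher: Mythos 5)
Your argument is correct and follows essentially the same route as the paper: you use the one-to-one correspondence between the representing matrices $H^{(p)}_{\lambda}$ and $H^{\circ(p)}_{\lambda}$, the fact that corresponding cycles have identical slopes with zero offsets in the $\circ$ case, and the observation that the asymptotics of $s(\lambda)$ are governed by slopes alone. Your explicit $\max_p\min_c$ / $\min_p\max_c$ formulas and the bounded-offset comparison with Theorem~\ref{t:asymp} merely spell out details that the paper's proof leaves implicit.
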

\begin{proof}
Observe that the representing matrices $H^{(p\circ)}_{\lambda}$ of
\begin{equation}
\label{hlcirc}
h_{\lambda}^{\circ}:=(\lambda+(A^{\circ})^{\sharp}B^{\circ}x)\enspace\wedge\enspace
(-\lambda+(B^{\circ})^{\sharp}A^{\circ}x)
\end{equation}
are in one-to-one correspondence with the representing matrices
$H^{(p)}_{\lambda}$ of $h_{\lambda}$. The finite entries
$H^{\circ(p)}_{\lambda}$ equal to $\pm\lambda$, they are in the same
places and with the same sign of $\lambda$ as in
$H^{(p)}_{\lambda}$. Hence the cycle means in
$H^{\circ(p)}_{\lambda}$ have the same slopes as the corresponding
cycle means in $H^{(p)}_{\lambda}$, but with zero offsets. When
$s(\lambda)=\rad(h_{\lambda})$ is computed by \eqref{flambda2}, the
asymptotics at large and small $\lambda$ is determined by the slopes
only and yields the same expression as for
$s^{\circ}(\lambda)=\rad(h_{\lambda}^{\circ})$.\eproof
\end{proof}

\subsection{Mean-payoff game oracles and reconstruction problems}
\label{ss:MPG}

Here we consider the problem of
identifying all affine pieces that
constitute the spectral function and
computing the whole spectrum of $(A,B)$
in the case when $A$ and $B$ have integer entries.

The result will be formulated in terms of calls to a mean-payoff game oracle
(computing the value of a mean payoff game).
Let us briefly describe what the mean-payoff games are and how they are related to our problem.
For more precise information the reader may consult Akian et al.~\cite{AGG-10} and Dhingra, Gaubert~\cite{DG-06}, as well as Bj\"{o}rklund, Vorobyov~\cite{bjorklund} and Zwick, Paterson~\cite{ZP-96}. 

It can be observed that the min-max function~$A^{\sharp}B$ is also a dynamic operator of a zero-sum deterministic mean-payoff game, which also corresponds to the system $Ax\leq Bx$. A schematic example of such a game is given in Figure~\ref{f:MPG}, left. Two players, named Max and Min, move a pawn on a bipartite digraph, whose nodes belong either to Max ($\square$) or to Min ($\bigcirc$). In the beginning of the game, the pawn is at a node $j$ of Min, and she has to move it to a node $i$ of Max, paying to him $-a_{ij}$ (some real number). Then Max has to choose a node $k$ of Min. While moving the pawn there, he receives $b_{ik}$ from her. The game proceeds infinitely long, and the aim of Max (resp. Min) is to maximize (resp. minimize) the average payment per turn (meaning a pair of consecutive moves of Min and Max). It turns out that the game has a value, which depends on the starting node of Min. Moreover $\rad(A^{\sharp}B)$ equals the greatest value over all starting nodes (i.e., all nodes of Min).

\begin{figure}
\begin{tabular}{ccccc}
\begin{tikzpicture}[auto, line width=1.5pt, shorten >=1pt,->]
\tikzstyle{vertex1}=[rectangle,fill=black!20,minimum size=15pt,inner sep=1pt]
\tikzstyle{vertex2}=[circle,fill=black!50,minimum size=18pt,inner sep=1pt]
\node[vertex2,xshift=-1.5cm,yshift=1.5cm] (j) {$j$};
\node[vertex2,xshift=-1.5cm,yshift=0cm] (k) {$k$};
\node[vertex2,xshift=-1.5cm,yshift=-1.5cm] (x) {};

\node[vertex1,xshift=1.5cm,yshift=1.5cm] (i) {$i$};
\node[vertex1,xshift=1.5cm,yshift=0cm] (y) {};
\node[vertex1,xshift=1.5cm,yshift=-1.5cm] (z) {};

\draw (j) to node {$-a_{ij}$} (i);
\draw (i) to node {$b_{ik}$} (k);
\draw (k) to (z);
\draw (z) to (x);
\draw (x) to (y);
\end{tikzpicture}

&&&

\begin{tikzpicture}[auto, line width=1.5pt, shorten >=1pt,->, bend right]
  \tikzstyle{vertex1}=[rectangle,fill=black!20,minimum size=30pt,inner sep=1pt]
\tikzstyle{vertex2}=[circle,fill=black!50,minimum size=35pt,inner sep=1pt]
\node[vertex1,xshift=0cm,yshift=1.5cm] (1) {$[m]$};
\node[vertex2,xshift=3cm,yshift=1.5cm] (2) {$[n]$};
\node[vertex1,xshift=6cm,yshift=1.5cm] (3) {$[m]$};
\draw (1) to node [swap] {$\lambda+B$} (2);
\draw (2) to node [swap] {$A^{\sharp}$} (1);
\draw (2) to node [swap] {$-\lambda+B^{\sharp}$} (3);
\draw (3) to node [swap] {$A$} (2);
\end{tikzpicture}
\end{tabular}
\caption{General mean-payoff game (left) and mean-payoff game corresponding to 
$Ax=\lambda+Bx$ (right)}
\label{f:MPG}
\end{figure}

The two-sided eigenproblem $Ax=\lambda+Bx$ can be represented as
\begin{equation}
\label{axlbx-repa}
\begin{pmatrix}
A\\
\lambda+B
\end{pmatrix}  
x\leq
\begin{pmatrix}
\lambda+B\\
A
\end{pmatrix}
x.
\end{equation}

This is equivalent to $x\leq h_{\lambda}(x)$ where 
$h_{\lambda}(x):=(\lambda+A^{\sharp}Bx)\wedge(-\lambda+B^{\sharp}Ax)$ 
as above. Hence the problem $Ax=\lambda+Bx$ corresponds to a parametric mean-payoff game of special kind, with 
$2m$ nodes of Max and $n$ nodes of Min, whose scheme is
displayed on Figure~\ref{f:MPG}, right, where individual nodes of the players are merged
in three large groups.

Denoting by $\MPG(m,n,M)$ the worst-case execution time of any mean-payoff 
oracle computing  $\rad(A^{\sharp}B)$, where $A,B\in\RR^{m\times n}$ have $-\infty$ entries and integer entries with the
greatest absolute value $M$, we immediately obtain that for the same $A$ and $B$ we can find
$s(0)=\rad(h)$ by calling that oracle, in no more than $\MPG(2m,n,M)$ operations.

The implementation of a mean-payoff oracle 
can rely on the policy iteration algorithm of~\cite{CGG-99,DG-06}, as well as
the subexponential algorithm of~\cite{bjorklund} or
the value iteration of~\cite{ZP-96}. Zwick and Paterson~\cite{ZP-96} 
showed that $\MPG(m,n,M)$ is pseudo-polynomial. We use this result below to demonstrate
that the graph of spectral function $s(\lambda)$ can be reconstructed in pseudo-polynomial time.

\begin{theorem}
\label{p:simplecomp}
Let $A,B\in\RR^{m\times n}$ have
only $\bzero$ entries and integer entries with absolute value bounded by $M$. Denote
$\maxlength:=\min(2m,n)$.
\begin{itemize}
\item[1.] All affine pieces that constitute
the function $s(\lambda)$ and hence the spectrum
of $(A,B)$ can be identified in no more than
$\Delta(A,B)\mytimes O(\maxlength^6)$ calls to the mean-payoff game oracle, whose worst-case complexity
is $\MPG(2m,n,\maxlength^2(M+4M\maxlength^2))$. In particular, the reconstruction can be done in 
pseudo-polynomial time.
\item[2.] When $A$ and $B$ have no $\bzero$ columns,
the number of calls needed to reconstruct
the function $s(\lambda)$ can be decreased to
$\Delta(A,B)\mytimes O(\maxlength^5)$, where each call takes no more than
$\MPG(2m,n,\maxlength^2(M+4M\maxlength)$ operations. When $A$ and $B$ are real, the number of calls is decreased to
$(\overline{C}(A,B)-\underline{C}(A,B))\mytimes O(\maxlength^4)$, and the complexity of each call to
$\MPG(2m,n,3M\maxlength^2)$ operations.

\end{itemize}
\end{theorem}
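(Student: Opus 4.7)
The plan is to combine three ingredients: Theorem~\ref{t:asymp}, which confines all non-asymptotic behaviour of $s$ to a compact $\lambda$-interval; Lemma~\ref{l:maxlength}, which pins down the shape of each affine piece of $s$; and the fact, noted just before the theorem, that a single evaluation $s(\lambda)=\min_p \rad(H_\lambda^{(p)})$ at a rational $\lambda$ amounts to one mean payoff game instance, via Proposition~\ref{p:specrad-rep} and~\eqref{mcmh}.

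First I would restrict the reconstruction to the compact interval $I:=[-2\maxlength^2\Delta(A,B),\,2\maxlength^2\Delta(A,B)]$: outside $I$ the function $s$ is a single affine piece by Theorem~\ref{t:asymp}.1, and these two outer pieces can be recovered by two oracle calls together with the finite list of admissible slopes $k/l$ given by Lemma~\ref{l:maxlength}. Next I would establish a separation estimate on the breakpoints of $s$ inside $I$. By Lemma~\ref{l:maxlength}, every local affine piece of $s$ has the form $\lambda\mapsto (s/l)\lambda+\beta/l$ with $\beta\in\mathbb{Z}$ and $|s|\leq l\leq\maxlength$, so intersecting two such lines yields an abscissa
\[
\lambda^{*} \;=\; \frac{\beta_2 l_1-\beta_1 l_2}{s_1 l_2-s_2 l_1},
\]
whose denominator in lowest terms is at most $|s_1 l_2-s_2 l_1|\leq 2\maxlength^2$. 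Consequently, any two distinct breakpoints of $s$ are at distance at least $1/(2\maxlength^2)^2=1/(4\maxlength^4)$.

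Using this separation, I would reconstruct $s|_{I}$ by bisection: starting from the endpoints of $I$ as known samples, I recursively call the oracle at the midpoint of each current sub-interval and stop recursing once its length drops below $1/(4\maxlength^4)$, at which point the sub-interval contains at most one breakpoint and the two adjacent affine pieces are determined by a constant number of further oracle calls together with the finite admissible slope/offset list. Because every non-affine sub-interval encountered consumes at least $1/(4\maxlength^4)$ of the total length $4\maxlength^2\Delta(A,B)$ of $I$, the total number of oracle calls is bounded by $\Delta(A,B)\cdot O(\maxlength^6)$, as required.

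For the cost of a single call I would evaluate $\rad(H_\lambda^{(p)})$ at $\lambda=p/q$ with $q\leq 2\maxlength^2$ by clearing denominators: the identity $\rad(qH)=q\,\rad(H)$ reduces the computation to a standard integer mean payoff game whose entries have absolute value at most $q(|\lambda|+2M)\leq 4\maxlength^4 M+4\maxlength^2 M$, hence within a universal constant factor of the stated bound $\maxlength^2(M+4M\maxlength^2)$; the pseudo-polynomial complexity then follows from Zwick and Paterson~\cite{ZP-96}. Part~2 proceeds identically, using Theorem~\ref{t:asymp}.2 and~\ref{t:asymp}.3 to shrink $I$ from $4\maxlength^2\Delta$ to $4\maxlength\Delta$ when $A,B$ have no $-\infty$ columns and to $\overline{C}(A,B)-\underline{C}(A,B)$ in the real case, which saves a factor of $\maxlength$ (resp.~$\maxlength^2$) in the total number of calls and tightens the integer entry bound of each MPG instance accordingly. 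The main technical point of this plan is the separation estimate in Step~2; everything else is a mechanical divide-and-conquer accounting.
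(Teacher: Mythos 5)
Your skeleton is essentially the paper's: restrict attention to the interval supplied by Theorem~\ref{t:asymp}, use Lemma~\ref{l:maxlength} to see that every affine piece of $s$ is of the form $(k\lambda+a)/l$ with integer data and $l\leq\maxlength$, so that breakpoints are rationals with denominator $O(\maxlength^2)$, clear denominators so that each evaluation of $s(\lambda)$ is one integer mean-payoff game instance (Proposition~\ref{p:specrad-rep}), and count roughly $|L|\cdot O(\maxlength^4)$ evaluations. The one real deviation is the sampling scheme: the paper calls the oracle at \emph{every} candidate breakpoint (every rational in $L$ with denominator at most $\maxlength^2$), and exactness is then automatic, because between two consecutive candidate points $s$ consists of a single affine piece, so linear interpolation of the computed values reconstructs $s$ with no further argument. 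You instead sample a bisection/uniform grid at the separation scale $1/(4\maxlength^4)$ and then must recover the exact pieces and breakpoints inside each leaf cell, and this is where your argument has a gap.

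Concretely: (i) your accounting phrase ``every non-affine sub-interval encountered consumes at least $1/(4\maxlength^4)$'' implicitly assumes you can certify from finitely many samples that $s$ is affine on a sub-interval; since $s$ is an infimum of the convex functions $\rad(H_\lambda^{(p)})$ it is \emph{not} convex, and collinear samples do not certify affinity (this is harmless only if you refine everywhere down to the threshold, which is what your $\Delta(A,B)\cdot O(\maxlength^6)$ count really uses); (ii) more seriously, the leaf step ``the two adjacent affine pieces are determined by a constant number of further oracle calls together with the finite admissible slope/offset list'' is asserted, not proved. Knowing $s$ at the two endpoints of a cell containing one unknown breakpoint does not determine the two lines: one value plus a finite list of admissible slopes does not pin down a line, a short piece may meet the grid only at a single endpoint, and a cell of length $1/(4\maxlength^4)$ may contain up to $O(\maxlength^2)$ candidate breakpoints, so ``constantly many'' extra calls per cell needs justification (naively it inflates the bound). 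This can be repaired, e.g., by halving the grid spacing so that every affine piece wholly contains some cell, identifying pieces from chords of breakpoint-free cells and breakpoints as intersections of consecutive distinct lines; but the cleanest repair is exactly the paper's device of sampling at all candidate breakpoints, which stays within the same $\Delta(A,B)\cdot O(\maxlength^6)$ budget. A smaller imprecision: your recovery of the two outer (asymptotic) pieces from ``two oracle calls plus the slope list'' is underspecified; the paper obtains the asymptotic slopes exactly from $s^{\circ}(\pm 1)$ via Proposition~\ref{2ndboolean} and then matches them with the value at the boundary of $L$. Your interval bounds, denominator-clearing, $\MPG$ entry-size estimates (up to constants) and the case 2 savings all agree with the paper.
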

\begin{proof}
In all cases we have a finite interval $L$ of reconstruction, determined by the asymptotics of
$s(\lambda)$. Using Theorem~\ref{t:asymp} , we obtain that in case 1 this is
\begin{equation}
\label{lcase1}
L:=[-2\maxlength^2\Delta(A,B),2\maxlength^2\Delta(A,B)]\subseteq [-4\maxlength^2M, 4\maxlength^2M],
\end{equation}
In case 2, this is
\begin{equation}
\label{lcase21}
L:=[-2\maxlength\Delta(A,B),2\maxlength\Delta(A,B)]\subseteq [-4\maxlength M,4\maxlength M]
\end{equation}
when $A$ and $B$ do not have $\bzero$ columns, or
\begin{equation}
\label{lcase22}
L:=[\underline{C}(A,B),\overline{C}(A,B)]\subseteq [-2M,2M]
\end{equation}
when $A$ and $B$ do not have $\bzero$ entries.

We first compute the asymptotic slopes of $s(\lambda)$ outside $L$.
By Proposition~\ref{2ndboolean}, we can do this by computing
$s^{\circ}(\pm 1)$ in just two calls to the oracle which computes it in no more than
$\MPG(2m,n,1)$ operations. Then the goal is
to reconstruct all affine pieces which constitute $s(\lambda)$ in
the interval $L$.

The affine pieces of $s(\lambda)$ correspond to the maximal cycle
means in the matrices from the representation of $h_{\lambda}(x)$.
The points where such affine pieces may intersect are given by
\begin{equation}
\frac{a_1+k_1\lambda}{n_1}=\frac{a_2+k_2\lambda}{n_2},
\end{equation}
where all parameters are integers and $1\leq |k_1|,|k_2|,n_1,n_2\leq \maxlength$ by Lemma~\ref{l:maxlength}. This implies
\begin{equation}
\lambda=\frac{a_1n_2-a_2n_1}{k_2n_1-k_1n_2}
\end{equation}
The denominators of these points range from $-\maxlength^2$ to $\maxlength^2$, hence
their number is $|L|\mytimes O(\maxlength^4)$ where $|L|$ is the length of the reconstruction interval $L$.
We  reconstruct the whole spectral function by calculating $s(\lambda)$ at these points,
since there is only one affine piece of $s(\lambda)$ between them.

Using~\eqref{lcase1}, \eqref{lcase21} and \eqref{lcase22} we obtain that the absolute value of the entries of $A$ and $\lambda+B$ at each call does not exceed 
$M+4\maxlength^2M$ in case 1, and $M+4\maxlength M$ or $M+2M$ in case 2. Multiplying the entries of $A$ and 
$\lambda+B$ by the denominator of
$\lambda$ which does not exceed $\maxlength^2$, we obtain a problem with integer costs, where all
maximum cycle means $r(H_{\lambda}^{(p)})$ get multiplied by that denominator, and hence $s(\lambda)$
gets multiplied by that denominator as well. Thus we can solve this mean-payoff game instead of
the initial one. In case 1, the new integer problem can be resolved by the mean-payoff oracle in
$\MPG(2m,n,\maxlength^2(M+4\maxlength^2M))$ operations. In case 2, it takes no more than 
$\MPG(2m,n,\maxlength^2(M+4\maxlength M))$ operations when $A$ and $B$ do not have $\bzero$ columns, and
no more than $\MPG(2m,n,3M\maxlength^2)$ operations when $A$ and $B$ do not have $\bzero$ entries. 
The proof is complete.\eproof
\end{proof}

Since $\spec(A,B)$ is the zero set of $s(\lambda)$, we can identify $\spec(A,B)$
by reconstructing $s(\lambda)$ in the intervals given by Proposition~\ref{specints} or more
generally, Theorem~\ref{t:asymp}. However, the task of reconstructing spectrum of $(A,B)$ as zero-level set is even more simple, by the following arguments.

\begin{theorem}
\label{t:rec-spectrum}
Let $A,B\in\RR^{m\times n}$ have
only integer or $\bzero$ entries.
\begin{itemize}
\item[1.] In general, the identification of $\spec(A,B)$ requires
no more than $M\mytimes O(\maxlength^3)$ calls to the mean-payoff game oracle, whose worst-case complexity
is $\MPG(2m,n,2\maxlength(M+2M\maxlength))$. In particular, $\spec(A,B)$
can be identified in pseudo-polynomial time.
\item[2.] If $A$ and $B$ have no $\bzero$ columns, then
the number of calls to the oracle needed to identify $\spec(A,B)$
does not exceed $(\mysup_i (B^{\diez}A0)_i+\mysup_i(A^{\diez}B0)_i)\mytimes O(\maxlength^2)$, and the complexity of the oracle does not exceed $\MPG(2m,n,6M\maxlength)$ operations.
\end{itemize}
\end{theorem}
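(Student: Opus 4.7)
The spectrum $\spec(A,B)$ is precisely the zero level set of the piecewise-affine function $s(\lambda)$, and since $s(\lambda)\leq 0$ everywhere, this zero set is a finite union of isolated points and closed intervals with rational endpoints. The plan is to enumerate a small candidate set $\mathcal{C}$ that provably contains every isolated zero and every endpoint of a zero interval of $s$, and then invoke the mean-payoff oracle at each candidate to decide whether $s(\lambda)=0$.

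\textbf{Candidate construction.} By Lemma~\ref{l:maxlength}, every affine piece of $s(\lambda)$ has the form $\lambda k/l+\alpha$ with $l\leq\maxlength$, $|k|\leq l$, and $l\alpha\in\mathbb{Z}$ satisfying $|l\alpha|\leq l\cdot\Delta(A,B)\leq 2M\maxlength$. For a non-constant piece ($k\neq 0$), the unique zero is $-l\alpha/k$, a rational with denominator at most $\maxlength$ and absolute value at most $2M\maxlength$. For a constant piece with value $0$, the endpoints of the resulting zero subinterval are the intersections with the adjacent non-constant pieces, hence (by the same formula applied to the adjacent piece) also rationals with denominator at most $\maxlength$. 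Thus every special point lies in
\[
\mathcal{C}:=\{p/q\colon 1\leq q\leq\maxlength,\ p\in\mathbb{Z},\ |p/q|\leq 2M\maxlength\},
\]
whose cardinality is $\sum_{q=1}^{\maxlength}(4M\maxlength q+1)=M\cdot O(\maxlength^3)$, which gives the bound in part~1. For part~2, Proposition~\ref{specints} confines $\spec(A,B)$ to the shorter interval $[-\mysup_i(A^{\sharp}B0)_i,\mysup_i(B^{\sharp}A0)_i]$ of length $L=\mysup_i(B^{\sharp}A0)_i+\mysup_i(A^{\sharp}B0)_i$, and the same enumeration restricted to this interval contains $L\cdot O(\maxlength^2)$ candidates.

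\textbf{Oracle calls and reconstruction.} At each $\lambda=p/q\in\mathcal{C}$ I multiply every entry of the min-max representation~\eqref{flambdarows} of $h_\lambda$ by $q$. This produces an integer-entry min-max problem whose spectral radius is $q\cdot s(\lambda)$, so one oracle call determines whether $s(\lambda)=0$. The rescaled entries are bounded by $qM+|p|+qM$, i.e., by $2\maxlength(M+2M\maxlength)$ in part~1 and by $6M\maxlength$ in part~2 (using $|\lambda|\leq 2M$ from Proposition~\ref{specints} combined with $(A^{\sharp}B0)_i,(B^{\sharp}A0)_i\leq 2M$ when $A,B$ lack $\bzero$ columns), giving the oracle complexities stated. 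To assemble $\spec(A,B)$ from the zeros in $\mathcal{C}$: for each pair of consecutive zero candidates, either the closed segment between them is contained in $\spec(A,B)$ (when a constant-zero piece of $s$ spans it) or only its endpoints belong to $\spec(A,B)$, since any interior zero of $s$ would again be a special point and hence lie in $\mathcal{C}$, contradicting consecutiveness. One additional oracle call per such pair, at an intermediate rational, distinguishes the two cases and multiplies the total call count by at most a constant.

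The delicate step I expect to have to argue carefully is the second half of the candidate construction: that the endpoints of \emph{every} zero interval of $s(\lambda)$ indeed lie in $\mathcal{C}$, with the uniform denominator bound $\maxlength$ from Lemma~\ref{l:maxlength}. This has to be applied not only to the cycle means inside a single representing matrix $H_\lambda^{(p)}$ but to all pieces produced by the lower envelope~\eqref{e:low-env}, and in particular it is needed to guarantee that checking $s$ on $\mathcal{C}$ alone suffices to detect all intervals of the spectrum.
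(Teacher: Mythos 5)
Your candidate-set construction, rescaling by the denominator, and the intermediate-point test between consecutive zeros are essentially the paper's own proof, and for part 2 (no $\bzero$ columns, hence a bounded spectrum by Proposition~\ref{specints}) your argument is complete. But in part 1 there is a genuine gap: you assert that the zero set of $s(\lambda)$ is a finite union of isolated points and closed intervals with rational endpoints, all of whose endpoints lie in your bounded window $|\lambda|\leq 2M\maxlength$. When $A$ and $B$ have $\bzero$ entries the spectrum can be \emph{unbounded} (a half-line, or all of $\R$): this happens exactly when an asymptotic piece of $s$ is the constant $0$, e.g.\ for the pair $A^{m,l},B^{m,l}$ of Proposition~\ref{amlbml} with $m=2l$ one has $s\equiv 0$ and $\spec(A,B)=\R$. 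Such a component has no endpoint inside your window, so probing only the candidates in $\mathcal{C}$ cannot certify what $\spec(A,B)$ looks like outside $[-2M\maxlength,2M\maxlength]$, and your algorithm as described would return a wrong (truncated) answer in these cases. The paper closes this by two extra oracle calls on the Boolean pattern matrices, computing $s^{\circ}(\pm 1)$ (Proposition~\ref{2ndboolean}), which reveal the asymptotic slopes of $s$ and hence whether the zero set continues beyond the reconstruction interval; you need this (or an equivalent check at a point beyond the asymptotic threshold of Theorem~\ref{t:asymp}) to make part 1 sound.

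A secondary, smaller point: your ``one additional oracle call at an intermediate rational'' silently needs that intermediate point to have controlled denominator, since the oracle complexity depends on the factor by which you rescale. The paper takes the mediant $(a_1+a_2)/(k_1+k_2)$ of the two neighbouring candidate zeros, which bounds the denominator by $2\maxlength$ and is precisely where the factor $2\maxlength$ in $\MPG(2m,n,2\maxlength(M+2M\maxlength))$ and $\MPG(2m,n,6M\maxlength)$ comes from; an arbitrary intermediate rational would not give the stated oracle bounds.
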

\begin{proof}
We have to reconstruct the zero-level set of $s(\lambda)$, within a finite interval $L$ of reconstruction. In case 1, we notice that the intersection of $s(\lambda$ with zero level
can occur only at points with absolute value not exceeding $2M\maxlength$ (since $s(\lambda)$
consists of affine pieces $(a+k\lambda)/l$ where $|a|\leq 2M\maxlength$). Hence in case 1
\begin{equation}
\label{lcase111}
L:=[-2M\maxlength, 2M\maxlength].
\end{equation}
In case 2 we use the bounds of Proposition~\ref{specints}:
\begin{equation}
\label{lcase211}
L:=[-\mysup_i(A^{\diez}B0)_i,\mysup_i (B^{\diez}A0)_i]\subseteq [-2M,2M]
\end{equation}
when $A$ and $B$ do not have $\bzero$ columns. In case 1, we also need to check the asymptotics
of $s(\lambda)$ outside the interval, for which we check $s^{\circ}(\pm 1)=0$ (i.e.,
$s^{\circ}(\pm 1)\geq 0$ which takes no more than
$\MPG(2m,n,1)$ operations).

The absolute value of entries of $A$ and $\lambda+B$ does not exceed $M+2M\maxlength$ in
case 1 and $M+2M$ in case 2. We have to check $s(\lambda)=0$ (i.e., $s(\lambda)\geq 0$) 
at all possible intersections
of affine pieces constituting $s(\lambda)$ with zero, i.e., at the points $\lambda=a/k$ within $L$, such that $a$ and
$k$ are integers and $k\leq\maxlength$. 
We also may have to check $s(\lambda)=0$ for one intermediate point between each pair of neighbouring points $\lambda_1$ and $\lambda_2$ such that $s(\lambda_1)=s(\lambda_2)=0$.
If it holds then $s(\lambda)=0$ holds for
the whole interval, and if it does not then it holds only at the ends. 
Note that such an intermediate point for $a_1/k_1$ and $a_2/k_2$ can be chosen as
$(a_1+a_2)/(k_1+k_2)$ thus leading to $k\leq 2\maxlength$.

Multiplying all the entries by $k$ yields a mean-payoff
game with integer costs, for which we check whether the value is nonnegative. This takes
no more than $\MPG(2m,n,2\maxlength(M+2M\maxlength))$ in Case 1 and
$\MPG(2m,n,2\maxlength\times 3M)$ in Case 2, with the number of calls not exceeding 
$|L|O(\maxlength^2)$.\eproof
\end{proof}

Note that this theorem uses the oracles checking $s(\lambda)\geq 0$, not requiring
to compute the exact value.

We can also formulate a certificate that $\lambda$ is an end (left or right) of a spectral
interval.

\begin{proposition}
\label{p:cert}
Supose that $s(\lambda^*)\geq 0$. Then $\lambda^*$ is the left (resp., the right) end
of an interval of $\spec(A,B)$ if and only if there exists a representing matrix
$H_{\lambda}^{(p)}$ where the weights of all cycles are nonpositive, and the slopes
of all cycles with zero weight are strictly positive (resp., negative).
\end{proposition}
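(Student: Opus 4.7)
The plan is to exploit the fact that under the hypothesis $s(\lambda^*)\geq 0$ combined with $s\leq 0$ from Theorem~\ref{rf=rg} one has $s(\lambda^*)=0$, and to recognize ``$\lambda^*$ is the left end of an interval of $\spec(A,B)$'' as the analytic condition that $s(\lambda^*)=0$ while $s(\lambda)<0$ on some left neighborhood $(\lambda^*-\delta,\lambda^*)$. The goal is then to translate this analytic condition into the stated combinatorial property of cycles of a single representing matrix $H_{\lambda^*}^{(p^*)}$.

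For the easy (sufficiency) direction, I would suppose such a $p^*$ exists. Then $\rad(H_{\lambda^*}^{(p^*)})\leq 0$ by the nonpositive-weight hypothesis, while $\rad(H_{\lambda^*}^{(p^*)})\geq s(\lambda^*)\geq 0$ by the infimum representation~\eqref{e:low-env} and the standing hypothesis, so $\rad(H_{\lambda^*}^{(p^*)})=s(\lambda^*)=0$ and $\lambda^*\in\spec(A,B)$. For $\lambda<\lambda^*$ close to $\lambda^*$, each cycle weight of $H_\lambda^{(p^*)}$ is an affine function $w_C(\lambda)=k_C\lambda+a_C$ with integer slope $k_C$ (Lemma~\ref{l:maxlength}); cycles with $w_C(\lambda^*)<0$ stay strictly negative by continuity, while cycles with $w_C(\lambda^*)=0$ satisfy $w_C(\lambda)=k_C(\lambda-\lambda^*)<0$ by the strict-positive-slope hypothesis. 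All cycle weights being strictly negative gives $\rad(H_\lambda^{(p^*)})<0$, hence $s(\lambda)<0$, showing $\lambda^*$ is indeed a left end.

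For the converse (necessity) direction, I would start from the analytic information $s(\lambda^*)=0$ and $s<0$ on $(\lambda^*-\delta_0,\lambda^*)$ and build $p^*$ as follows. Since $s$ is the infimum of finitely many piecewise-affine functions $\rad(H_\cdot^{(p)})$, each itself a max of cycle-mean affine functions, on a sufficiently small left neighborhood $(\lambda^*-\delta_1,\lambda^*)$ one can stabilize both (i) a selector $p^*$ realizing $s(\lambda)=\rad(H_\lambda^{(p^*)})$, and (ii) a single cycle of $H_\cdot^{(p^*)}$ attaining the max cycle mean. The Lipschitz continuity of $\rad(H_\cdot^{(p^*)})$ (Proposition~\ref{p:flambdaconv}) then yields $\rad(H_{\lambda^*}^{(p^*)})=\lim_{\lambda\uparrow\lambda^*}s(\lambda)=0$, so $H_{\lambda^*}^{(p^*)}$ has all cycle weights nonpositive and at least one equal to zero. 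Moreover, $\rad(H_\lambda^{(p^*)})<0$ to the left of $\lambda^*$ forces every cycle's weight to be strictly negative there; applied to a cycle $C$ with $w_C(\lambda^*)=0$, the affine expression $w_C(\lambda)=k_C(\lambda-\lambda^*)$ can only be negative for $\lambda<\lambda^*$ when $k_C>0$, which is the required strictly positive slope.

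The step I expect to be the main obstacle is the uniform choice of $p^*$ in the necessity direction, namely arguing that a single $p^*$ simultaneously realizes the infimum in $s(\lambda)$ and has one fixed active cycle throughout a full one-sided neighborhood of $\lambda^*$. This relies crucially on the finiteness of the family of representing matrices and the piecewise-affinity of each $\rad(H_\cdot^{(p)})$, which permit shrinking the neighborhood until all minimizers and maximizers stabilize. The right-end case follows by an entirely symmetric argument, replacing the left neighborhood by a right one and the condition $k_C>0$ by $k_C<0$.
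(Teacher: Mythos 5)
Your proposal is correct and follows essentially the same route as the paper's proof: identify being a left (resp.\ right) end with the analytic condition $s(\lambda^*)=0$ and $s<0$ on a one-sided punctured neighborhood, stabilize a single representing matrix realizing the infimum \eqref{e:low-env} on that neighborhood via finiteness and piecewise-affinity, and translate through the maximum cycle mean formula \eqref{mcmh} into the sign and slope conditions on cycles. You merely spell out more explicitly than the paper the sufficiency direction (using $s\leq\rad(H^{(p^*)}_\lambda)$ for an arbitrary certifying matrix) and the continuity step $\rad(H^{(p^*)}_{\lambda^*})=0$, which the paper leaves implicit.
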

\begin{proof}
Condition $s(\lambda^*)\geq 0$ assures that $\lambda^*\in\spec(A,B)$. We will consider the
left end case, for the right end the argument is similar. 
Recall that $s(\lambda^*):=\rad(h_{\lambda^*}$ admits an inf-representation~\eqref{e:low-env} with
selection property. Since there is only finite number of representing matrices, there
exists $H^{(p)}_{\lambda}$ such that $s(\lambda)=\rad(H^{(p)}_{\lambda}$ for $\lambda\in[\lambda^*-\epsilon,\lambda^*]$ (for the right end, we would consider $\lambda\in[\lambda^*,\lambda^*+\epsilon]$). Then $\lambda^*$ is the left end of a spectral interval if and only if
$\rad(H^{(p)}_{\lambda^*})=0$ but $\rad(H^{(p)}_{\lambda})<0$ for $\lambda\in[\lambda^*-\epsilon,\lambda^*)$. After applying the definition of $H_{\lambda}^{(p)}$ \eqref{flambdarows} and
the maximum cycle mean formula for $\rad(H_{\lambda}^{(p)})$ \eqref{mcmh}, the claim follows.\eproof  
\end{proof}

Observe that the condition in Proposition~\ref{p:cert} can be verified in polynomial time for a
given $H^{(p)}$. Namely, it suffices to compute the maximum cycle mean, identify the {\em critical
subgraph} consisting of all cycles with zero cycle mean, and solve the maximum cycle mean problem
for that subgraph, with the edges weighted by $1$ or $-1$ according to the choice of $\lambda$ or
$-\lambda$ in~\eqref{flambdarows}.

The reconstruction of spectral function has been implemented in MATLAB, also
to generate Figures~\ref{curioustest} and~\ref{morsetest}.

\subsection{Examples of analytic computation}
\label{ss:example}

In this section we consider two particular situations when the
spectral function can be constructed analytically. The first example
shows that any asymptotics $k/l$, where $l=1,\ldots,m$ and $k=l-2t$
for $t=1,\ldots,l$, can be realized. The second example is taken
from \cite{Ser-note-10}, and it shows that any system of intervals
and points on the real line can be represented as spectrum of a
max-plus two-sided eigenproblem.

{\bf Asymptotic slopes.} In our first example we consider pairs of
matrices $A^{m,l}\in\Rmax^{m\times m}, B^{m,l}\in \Rmax^{m\times m}$
with entries in $\{0,-\infty\}$, where $0\leq l\leq \lfloor m\rfloor$. An
intuitive idea is to make some ``exchange'' between the
max-plus identity matrix and some cyclic permutation matrix.
For instance
\begin{equation}
\label{ab62}
A^{6,2}=
\begin{pmatrix}
\cdot & \cdot & \cdot & \cdot & \cdot & 0\\
\cdot & 0 & \cdot & \cdot & \cdot & \cdot\\
\cdot & 0 & \cdot & \cdot & \cdot & \cdot\\
\cdot & \cdot & \cdot & 0 &\cdot & \cdot\\
\cdot & \cdot & \cdot & 0 &\cdot & \cdot\\
\cdot & \cdot & \cdot & \cdot & 0 & \cdot
\end{pmatrix},
B^{6,2}=
\begin{pmatrix}
0 & \cdot & \cdot & \cdot & \cdot & \cdot\\
0 & \cdot & \cdot & \cdot & \cdot & \cdot\\
\cdot & \cdot & 0 & \cdot & \cdot & \cdot\\
\cdot & \cdot & 0 & \cdot & \cdot & \cdot\\
\cdot & \cdot & \cdot & \cdot & 0 & \cdot\\
\cdot & \cdot & \cdot & \cdot & \cdot & 0
\end{pmatrix},
\end{equation}
where the dots denote $-\infty$ entries.

Formally, $A^{m,l}=(a_{ij}^{m,l})$ are defined as matrices with
$\{0,-\infty\}$ entries such that $a_{ij}^{m,l}=0$ for $i=1$ and
$j=m$, or $i=j+1$ where $2l<i\leq m$, or $i=j=2k$ where $1\leq k\leq
l$, or $i=2k+1$ and $j=2k$, where $1\leq k< l$, and
$a_{ij}^{m,l}=-\infty$ otherwise.

Similarly, $B^{m,l}=(b_{ij}^{m,l})$ are defined as matrices with entries in
$\{0,-\infty\}$ such that $b_{ij}^{m,l}=0$ for $i=j$ where
$2l<i\leq m$, or $i=j=2k-1$ where $1\leq k\leq l$, or $i=2k$ and
$j=2k-1$, where $1\leq k\leq l$, and $b_{ij}^{m,l}=-\infty$
otherwise.

\begin{proposition}
\label{amlbml} The spectral function associated with $A^{m,l},
B^{m,l}$ consists of two linear pieces: $s(\lambda)=\lambda\mytimes
(m-2l)/m$ for $\lambda\leq 0$ and $s(\lambda)=-\lambda\mytimes
(m-2l)/m$ for $\lambda\geq 0$.
\end{proposition}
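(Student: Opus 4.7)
The plan is to use Theorem~\ref{rf=rg}, applied to $A:=A^{m,l}$ and $\lambda+B:=\lambda+B^{m,l}$, which identifies $s(\lambda)$ with $-\min_{x\in\RRn}\chebdist(Ax,\lambda+Bx)$. Thus it suffices to show that this minimum equals $(m-2l)|\lambda|/m$; the two claimed linear pieces are just the rewriting of this expression in the sign cases $\lambda\geq 0$ and $\lambda\leq 0$. (Note also that the $-\infty$-columns of $A^{m,l}$ are the odd $j\leq 2l-1$, while those of $B^{m,l}$ are the even $j\leq 2l$, so $A$ and $B$ have no common $-\infty$-column and the Chebyshev formulation applies.)

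First I would list the coordinates $d_i:=(Ax)_i-\lambda-(Bx)_i$ using the prescribed supports of $A^{m,l}$ and $B^{m,l}$. Row $i=1$ gives $d_1=x_m-x_1-\lambda$; for $k=1,\ldots,l$ the even row $i=2k$ gives $d_{2k}=x_{2k}-x_{2k-1}-\lambda$; for $k=1,\ldots,l-1$ the odd row $i=2k+1$ gives $d_{2k+1}=x_{2k}-x_{2k+1}-\lambda$; and for $i=2l+1,\ldots,m$ row $i$ gives $d_i=x_{i-1}-x_i-\lambda$. Solving each relation for the increment $x_i-x_{i-1}$ (with the convention $x_0:=x_m$) and summing around the cycle $1\to 2\to\cdots\to m\to 1$, all $x_i$'s cancel and one obtains the telescoping identity
\[
d_1\;+\;\sum_{k=1}^{l-1}d_{2k+1}\;+\;\sum_{i=2l+1}^{m}d_i\;-\;\sum_{k=1}^{l}d_{2k}\;=\;(2l-m)\lambda,
\]
which is a signed sum of exactly $m$ of the $d_i$'s. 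By the triangle inequality, $m\cdot\chebdist(Ax,\lambda+Bx)\geq (m-2l)|\lambda|$, hence $s(\lambda)\leq -(m-2l)|\lambda|/m$.

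For the matching inequality I would construct an explicit optimiser. Equality in the triangle inequality forces each of the $m$ signed summands to equal $-(m-2l)\lambda/m=:-\delta$ when $\lambda>0$ (and the opposite sign when $\lambda<0$). These prescribed values of $d_1,\ldots,d_m$ fix every increment $x_i-x_{i-1}$; starting from $x_1=0$ and integrating around the cycle produces a unique $x\in\RRn$, and the closing relation (the $d_1$ equation) is consistent precisely because $\delta$ was chosen as $(m-2l)\lambda/m$. The resulting $x$ realises $\chebdist(Ax,\lambda+Bx)=(m-2l)|\lambda|/m$, which together with the previous step gives $s(\lambda)=-(m-2l)|\lambda|/m$, as claimed.

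The main obstacle is the sign book-keeping in the telescoping identity. The cycle visits $l$ ``exchange'' blocks on $\{1,\ldots,2l\}$ (where the prescribed increments alternate in $\lambda$-sign, contributing $l$ copies of $+\lambda$ and $l-1$ of $-\lambda$) followed by an ``identity tail'' $\{2l+1,\ldots,m\}$ together with the wrap step from $x_m$ to $x_1$ (where the $m-2l+1$ increments all carry the sign $-\lambda$). Summing yields the coefficient $l-(l-1)-(m-2l)-1=2l-m$, which is the only place where the combinatorial structure of $A^{m,l},B^{m,l}$ enters and is exactly what produces the asymptotic slope $\pm(m-2l)/m$.
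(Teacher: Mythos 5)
Your proof is correct, but it takes a genuinely different route from the paper's. The paper argues on the ``dual''/strategy side: it introduces the matrix $C^{m,l}(\lambda)$, observes that every representing max-plus matrix of $h^{m,l}_{\lambda}$ picks one of the two entries $\pm\lambda$ in each row, and then classifies these choices — the two pure circulant choices give maximum cycle means $\lambda(m-2l)/m$ and $-\lambda(m-2l)/m$, while any mixed choice contains a two-cycle of weight $0$ and hence has maximum cycle mean $\geq 0$ — so that Proposition~\ref{p:specrad-rep} yields $s(\lambda)=\lambda(m-2l)/m\wedge(-\lambda(m-2l)/m)$. You instead work on the primal side, via the identification $s(\lambda)=-\min_{x}\chebdist(Ax,\lambda+Bx)$ from Theorem~\ref{rf=rg}: your telescoping signed-sum identity around the cycle $1\to2\to\cdots\to m\to1$ gives the lower bound $(m-2l)|\lambda|/m$ on the distance, and your explicit vector (with all $|d_i|$ equal) attains it; I checked both the identity (coefficient $2l-m$) and the consistency of the closing relation, and they are right. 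What your approach buys is an explicit optimizer (hence an eigenvector of $h_{\lambda}^{m,l}$ up to the standard correspondence) and it avoids the step the paper leaves as ``it can be shown'' (that the representing matrices are exactly the row-wise choices in $C^{m,l}(\lambda)$); what the paper's approach buys is brevity and a direct illustration of the mean-payoff-game machinery the rest of the paper is built on. One small point you should add: the minimum in Theorem~\ref{rf=rg} ranges over all $x\in\RRn$, while your telescoping identity presupposes $x$ real; this is harmless because each row couples the pair of coordinates $\{\sigma(i),\tau(i)\}$, these pairs link all $m$ coordinates into a single cycle, so any nontrivial $x$ with a $-\infty$ entry violates $\supp(Ax)=\supp(\lambda+Bx)$ and gives infinite Chebyshev distance — but the sentence saying so is needed. (Also, your row ranges tacitly assume $l\geq1$; the degenerate case $l=0$ needs only the trivial adjustment that row $1$ is the wrap-around row.)
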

\begin{proof}
Let us introduce yet another matrix
$C^{m,l}(\lambda)=(c_{ij}^{m,l}(\lambda))\in\Rmax^{m\times m}$.
Informally, it is a sum of a $\{0,-\infty\}$ permutation (circulant)
matrix and its inverse, weighted by $\pm\lambda$. This pattern
corresponds to the above mentioned ``exchange'' in the construction
of $A^{m,l}$ and $B^{m,l}$. In particular, \eqref{ab62} corresponds
to
\begin{equation}
\label{c62}
C^{6,2}=
\begin{pmatrix}
\cdot & -\lambda & \cdot & \cdot & \cdot & -\lambda\\
\lambda & \cdot & \lambda & \cdot & \cdot & \cdot\\
\cdot & -\lambda & \cdot & -\lambda & \cdot & \cdot\\
\cdot & \cdot & \lambda & \cdot & \lambda & \cdot\\
\cdot & \cdot & \cdot & -\lambda &\cdot & \lambda\\
\lambda & \cdot & \cdot & \cdot & -\lambda & \cdot
\end{pmatrix}.
\end{equation}
Defining formally, $c^{m,l}_{1,m}=-\lambda$,
$c^{m,l}_{m,1}=\lambda$, and
\begin{equation}
\label{cijdef}
c^{m,l}_{ij}=
\begin{cases}
\operatorname{sign}(i,j)\mytimes\lambda, & \text{if $1\leq i,j\leq m$ and $|j-i|=1$},\\
-\infty, & \text{otherwise},
\end{cases}
\end{equation}
where 
\begin{equation}
\operatorname{sign}(i,j)=
\begin{cases}
1, & \text{$j-1=i\geq 2l$ or $j\pm 1=i=2k\leq 2l$},\\
-1, &\text{$i-1=j\geq 2l$ or $i\pm 1=j=2k\leq 2l$}.
\end{cases}
\end{equation}
Observe that the pairs $(i,j)$ and $(j,i)$ for $j=i+1$ (and also
$(1,m)$ and $(m,1)$) have the opposite sign.

It can be shown that each representing max-plus matrix of the
min-max function
\begin{equation}
\label{hml}
h_{\lambda}^{m,l}(x)=
(\lambda+(A^{m,l})^{\sharp}B^{m,l}x)\wedge
(-\lambda+(B^{m,l})^{\sharp}A^{m,l}x)
\end{equation}
is choosing one of the two entries in each row of
$C^{m,l}(\lambda)$. The matrices can be classified according to this
choice as follows (see \eqref{c62} for example):\\
1. Choose $(m,1)$, and $(i,i+1)$ for $i=1,\ldots, m-1$;\\ 2. Choose
$(1,m)$, and $(i,i-1)$ for $i=2,\ldots,m$;\\
3. Choose both $(m,1)$ and $(1,m)$, or both $(i-1,i)$ and $(i,i-1)$
for some
$i=2,\ldots,n$.\\
The first two strategies give just one matrix each, with the
(maximum) cycle means $\lambda\mytimes (m-2l)/m$ and $-\lambda\mytimes
(m-2l)/m$. The rest of the representing matrices are described by
3., and it follows that their maximum cycle means are always greater
than or equal to $0$. Hence $s(\lambda)=\lambda\mytimes (m-2l)/m\wedge
-\lambda\mytimes (m-2l)/m$.\eproof\end{proof}

{\bf The spectrum of two-sided eigenproblem.} Now we consider an
example of \cite{Ser-note-10}. Let us define $A\in\RR^{2\times 3t}$,
$B\in\RR^{2\times 3t}$:
\begin{equation}
\label{ABdef}
\begin{split}
A&=
\begin{pmatrix}
\ldots & a_i & b_i & c_i &\ldots\\
\ldots & 2a_i & 2b_i & 2c_i & \ldots
\end{pmatrix},\\
B&=
\begin{pmatrix}
\ldots & 0 & 0 & 0 &\ldots\\
\ldots & a_i & c_i & b_i &\ldots
\end{pmatrix},
\end{split}
\end{equation}
where $a_i\leq c_i<a_{i+1}$ for $i=1,\ldots, t-1$, where
$b_i:=\frac{a_i+c_i}{2}$. The following result describes
$\spec(A,B)$.

\begin{theorem}[\cite{Ser-note-10}]
\label{mainres-spec}
With $A,B$ defined by \eqref{ABdef},
\begin{equation}
\label{e:mainres-spec}
\spec(A,B)=\bigcup_{i=1}^t [a_i,c_i].
\end{equation}
\end{theorem}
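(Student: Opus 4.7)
My plan is to prove the two inclusions $\bigcup_{i=1}^t [a_i,c_i]\subseteq\spec(A,B)$ and $\spec(A,B)\subseteq\bigcup_{i=1}^t [a_i,c_i]$ separately. For the first, for each $\lambda\in[a_j,c_j]$ I would exhibit an eigenvector supported in the $j$-th triple only. Writing $h_j:=(c_j-a_j)/2$, I take $x_{j,1}=x_{j,2}=0$, $x_{j,3}=\lambda-c_j$ when $\lambda\in[b_j,c_j]$, and $x_{j,1}=0$, $x_{j,2}=\lambda-b_j$, $x_{j,3}=2\lambda-b_j-c_j$ when $\lambda\in[a_j,b_j]$; all other entries are $-\infty$. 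Each row of $Ax=\lambda+Bx$ is then verified by a direct computation using $b_j=(a_j+c_j)/2$, and the two formulas agree at the common endpoint $\lambda=b_j$.

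For the reverse inclusion, the exterior is handled by the explicit bounds of Corollary~\ref{c:butbounds}: a short computation gives $\underline{D}(A,B)=a_1$ and $\overline{D}(A,B)=c_t$, so no $\lambda<a_1$ or $\lambda>c_t$ can be in the spectrum. The remaining and main difficulty is to rule out $\lambda$ in a gap $(c_j,a_{j+1})$. My plan is a change of variables that plays Row 1 against Row 2. Let $D_{i,k}:=B_{2,i,k}-A_{1,i,k}$; a direct check shows $D_{i,k}$ takes the values $0$, $h_i$, $-h_i$ for $k=1,2,3$. Given an eigenvector $x$, normalise $\max_c x_c=0$ and put $y_c:=A_{1,c}+x_c$. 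Then Row 1 becomes $\max_c y_c=\lambda$ with $y\le\lambda$ componentwise, and Row 2 reads $\mathcal{P}=\lambda+\mathcal{Q}$, where $\mathcal{P}:=\max_c(A_{1,c}+y_c)$ and $\mathcal{Q}:=\max_c(D_c+y_c)$.

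The argument then localises the maximisers. Since $\lambda$ avoids the finite set $\{A_{1,c}\}_c$, the position $c_0$ with $x_{c_0}=0$ must satisfy $A_{1,c_0}\le\lambda<a_{j+1}$, forcing $c_0$ into a block with index $\le j$, while any Row 1 equality point $c^*=(i^*,k^*)$ with $y_{c^*}=\lambda$ must lie in a block $i^*\ge j+1$. At a maximiser $(i_Q,k_Q)$ of $\mathcal{Q}$, the bound $\mathcal{P}\ge A_{1,i_Q,k_Q}+y_{i_Q,k_Q}$ together with Row 2 yields $\lambda\ge A_{1,i_Q,k_Q}-D_{i_Q,k_Q}$. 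A tabulation gives $A_{1,i,k}-D_{i,k}\in\{a_i,a_i,c_i+h_i\}$, each of which is $\ge a_i\ge a_{j+1}>\lambda$ whenever $(i_Q,k_Q)$ sits in a block $\ge j+1$; so $(i_Q,k_Q)$ must lie in a block $\le j$, whence $\mathcal{Q}\le B_{2,i_Q,k_Q}\le c_j$ (using $y\le A_1$ from $x\le 0$). But $\mathcal{Q}\ge D_{c^*}+\lambda$; comparing with $\mathcal{Q}\le c_j$ forces $D_{c^*}<0$, so $k^*=3$ and $A_{1,c^*}=c_{i^*}$. Finally $\mathcal{P}\ge c_{i^*}+\lambda$ combined with $\mathcal{P}=\lambda+\mathcal{Q}\le\lambda+c_j$ yields $c_{i^*}\le c_j$, contradicting $c_{i^*}\ge c_{j+1}>c_j$. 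The delicate step is this tabulation, which places $A_1-D\ge a_{j+1}$ on blocks $\ge j+1$; it is precisely this identity that traps the maximiser of $\mathcal{Q}$ in the small-index blocks and drives the final contradiction.
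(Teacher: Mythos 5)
Your proof is correct, but it follows a genuinely different route from the one the paper (and the cited source \cite{Ser-note-10}) takes. The paper supports Theorem~\ref{mainres-spec} by computing the \emph{entire} spectral function: for every real $\lambda$ it exhibits a finite eigenvector $y^{\lambda}$ of the projector composition $P_DP_{C(\lambda)}$ (see \eqref{ylambda}), so that by the nonlinear Collatz--Wielandt identities of Theorem~\ref{Nussbaum} the corresponding eigenvalue \emph{is} $s(\lambda)$, given by \eqref{e:specf}; the spectrum is then read off as the zero level set of $s$. You instead prove the set equality directly: for $\lambda\in[a_j,c_j]$ you construct an eigenvector of the pencil itself supported on the $j$-th triple (I checked both branches, including the matching at $\lambda=b_j$ and the degenerate case $a_j=c_j$); the exterior $\lambda<a_1$, $\lambda>c_t$ is excluded by $\underline{D}(A,B)=a_1$, $\overline{D}(A,B)=c_t$ and Corollary~\ref{c:butbounds}; and each gap $(c_j,a_{j+1})$ is excluded by your substitution $y_c=A_{1,c}+x_c$, the tabulation $A_{1,(i,k)}-D_{(i,k)}\in\{a_i,a_i,c_i+h_i\}$, and the resulting localisation of the row-1 maximiser in blocks $\ge j+1$ against the $\mathcal{Q}$-maximiser in blocks $\le j$ — this chain is sound (the cancellations are legitimate because the relevant $y$-entries are finite). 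The trade-off: the paper's method yields strictly more information — the value of $s(\lambda)$, i.e.\ the least Chebyshev distance between $Ax$ and $\lambda+Bx$, for every $\lambda$, and in particular the exactness of the bounds $-\rad(A^{\diez}B)=a_1$, $\rad(B^{\diez}A)=c_t$ — whereas your argument is more elementary and self-contained (no Perron--Frobenius machinery beyond Corollary~\ref{c:butbounds}), but establishes only the zero set, not the shape of $s(\lambda)$ in the gaps.
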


To calculate $s(\lambda)$, which is a more general task, one can
study the representing matrices like in the previous example.
Another way is to guess, for each $\lambda$, a finite eigenvector of
$P_D P_{C(\lambda)}$ and then $s(\lambda)$ is the corresponding
eigenvalue. By this method we obtained that:
\begin{equation}
\label{e:specf}
\specf(\lambda)=
\begin{cases}
\lambda-a_1, & \text{if $\lambda\leq a_1$},\\
0, & \text{if $a_k\leq\lambda\leq c_k$, $k=1,\ldots,t,$}\\
\max(c_k-\lambda,\lambda-a_{k+1}), &\text{if $c_k\leq\lambda\leq a_{k+1}$, $k=1,\ldots,t-1,$}\\
c_t-\lambda, & \text{if $\lambda\geq c_t$}.
\end{cases}
\end{equation}
More precisely, it can be shown that the following vectors are
eigenvectors of $P_DP_{C(\lambda)}$:
\begin{equation}
\label{ylambda}
y^{\lambda}=
\begin{cases}
\begin{array}{c@{{}\quad{}}ccc}
(0 & a_1 & 0 & a_1),
\end{array} &\text{if $\lambda\leq a_1$},\\
\begin{array}{c@{{}\quad{}}ccc}
(0 & \lambda+b_k-a_k & 0 & \lambda+b_k-a_k),
\end{array} &\text{if $a_k\leq\lambda\leq b_k$, $k=1,\ldots,t,$}\\
\begin{array}{c@{{}\quad{}}ccc}
(0 & c_k & 0 & c_k),
\end{array} &\text{if $b_k\leq\lambda\leq c_k$, $k=1,\ldots,t,$}\\
\begin{array}{c@{{}\quad{}}ccc}
(0 & \lambda & 0 & \lambda),
\end{array} &\text{if $c_k\leq\lambda\leq a_{k+1},$ $k=1,\ldots,t-1,$},\\
\begin{array}{c@{{}\quad{}}ccc}
(0 & c_t & 0 & c_t)^T,
\end{array} &\text{if $\lambda\geq c_t$},
\end{cases}
\end{equation}
with the eigenvalues expressed by \eqref{e:specf}.

We can also conclude that in this case $-\rad(A^{\diez}B)=a_1$ and
$\rad(B^{\diez}A)=c_t$. Indeed, by \eqref{e:specf},
$\specf(\lambda)=\lambda-a_1$ for $\lambda\leq a_1$ and
$\specf(\lambda)=c_t-\lambda$ for $\lambda\geq c_t$. Comparing this
with the result of Theorem \ref{t:asymp}, part 3, we get the claim.

As $a_1$ and $c_t$ are eigenvalues, the last result shows that the
bounds given in Theorem \ref{p:specbounds1} cannot be improved in
general.

\begin{figure}
\centering
\includegraphics[width=0.6\linewidth]{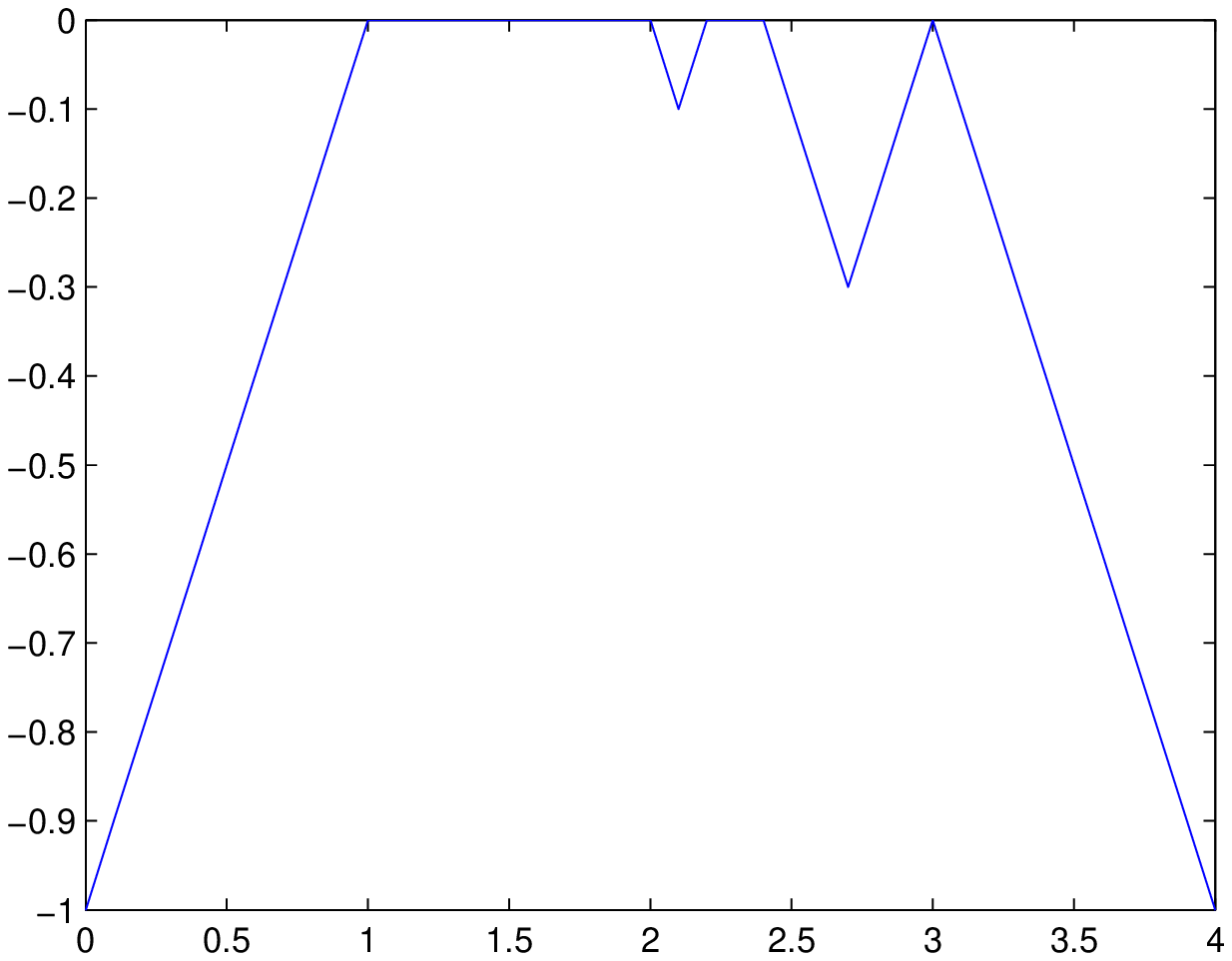}
\caption{The spectral function of $A$ and $B$ in \eqref{e:ABex}}
\label{morsetest}
\end{figure}

For example, take $t=3$, $[a_1,\; c_1]=[1,\; 2]$, $[a_2,\;
c_2]=[2.2,\; 2.4]$ and $[a_3,\; c_3]=[3,\; 3]$. Then
\begin{equation}
\label{e:ABex}
\begin{split}
A&=
\begin{pmatrix}
1 & 1.5 & 2 & 2.2 & 2.3 & 2.4 & 3\\
2 & 3 & 4 & 4.4 & 4.6 & 4.8 & 6
\end{pmatrix},\\
B&=
\begin{pmatrix}
0 & 0 & 0 & 0 & 0 & 0 & 0\\
1 & 2 & 1.5 & 2.2 & 2.4 & 2.3 & 3
\end{pmatrix}
\end{split}
\end{equation}
The spectral function is shown on Figure~\ref{morsetest}.
Note that this is the least Lipschitz function with a given zero-level set. The same observation
holds for the general case~\eqref{e:specf}.

\if{
\subsection{Practical motivations for parametric tropical two-sided systems}
\label{ss:AND/OR}

The two-sided eigenproblem $Ax=\lambda+Bx$ is related to no wait constraints, 
a situation which typically occurs in Gantt charts. Also, suppose that we have to test whether
two different lines, producing the same goods from the same raw materials, can work similarly. 
Then we may ask whether they can produce the goods with the same time differences between
the events of final production. In this case, $\lambda$ means the possible value of time shift, and
$x$ is the vector of starting times. The method of spectral function also yields, for each value of the
time shift $\lambda$, the least Chebyshev distance between the vectors of production times.

More generally, consider tropical systems of inequalities $Ax\leq Bx$. They are basic for our theory, and intimately related
to some problems in scheduling. For instance, a system with AND/OR precedence constraints~\cite{mohring} defined on a set of jobs $[n]:=\{1,\ldots,n\}$
with $m$ waiting conditions $(S,i)$, where $S\subseteq [n]$ and $i\in[n]$.
Waiting conditions $(S,i)$ ($\square$) represent OR-precedence constraints sending further signals as soon as some job $j$ is completed and the required
time separation between that job and $j$ is attained.
Jobs ($\bigcirc$) represent AND-precedence constraints requiring all waiting conditions
to be satisfied. See Figure~\ref{f:quasiMPG}.

Suppose that job $j$ is involved in a waiting condition $(S,i)$ (i.e., $j\in S$).
Denote by $-b_{ij}$ the corresponding time separation. The waiting condition can be 
written as
\begin{equation}
\label{wait-cond}
x_i\geq \min_{j\in S} -b_{ij}+x_j.
\end{equation}
The system of waiting conditions \eqref{wait-cond}, after inverting both sides of the inequalities,
can be written as the system of max-plus linear inequalities $A\otimes x\leq B\otimes x$ where each row of $A$ contains only one finite entry equal to $0$, and $B$ is the matrix with coefficients
$b_{ij}$. Note that each system $A\otimes x\leq B\otimes x$ can be transformed
to this form and understood as a system with AND/OR precedence constraints. 

\begin{figure}
\begin{center}
\begin{tikzpicture}[auto, line width=1.5pt, shorten >=1pt,->]
\tikzstyle{vertex1}=[rectangle,fill=black!20,minimum size=20pt,inner sep=1pt]
\tikzstyle{vertex2}=[circle,fill=black!50,minimum size=20pt,inner sep=1pt]
\node[vertex2,xshift=-1.5cm,yshift=1.5cm] (L1) {AND};
\node[vertex2,xshift=-1.5cm,yshift=0cm] (L0) {AND};
\node[vertex2,xshift=-1.5cm,yshift=-1.5cm] (L-1) {AND};
\node[vertex2,xshift=1.5cm,yshift=0cm] (R) {AND};

\node[vertex1,xshift=0cm,yshift=0.75cm] (C1) {OR};
\node[vertex1,xshift=0cm,yshift=-0.75cm] (C-1) {OR};
\draw (L1) to (C1);
\draw (L0) to (C1);
\draw (L0) to (C-1);
\draw (L-1) to (C-1);
\draw (C1) to (R);
\draw (C-1) to (R);
\end{tikzpicture}
\end{center}
\caption{Jobs as AND-nodes and waiting conditions as OR-nodes}
\label{f:quasiMPG}
\end{figure}

AND-nodes $(\bigcirc)$ and OR-nodes $(\square)$ correspond, respectively,
to the nodes of player Min and the nodes of player Max in the mean-payoff
game associated with the system $Ax\leq Bx$ (described in Subsection~\ref{ss:MPG}). 
The direction of arcs in that mean-payoff game is reversed.

One-parametric extensions of systems with AND/OR
precedence constraints, similar to the two-sided eigenproblem of this paper,  
can be used to handle some optimization problems. See 
Gaubert, Katz and Sergeev~\cite{GKS-11}, where the following example can be found. 
Suppose that, assumed
$x_3=0$, we want to minimize $\min(-1+x_1,-3+x_2)$ over the solutions of
\begin{equation}
\label{max-example}
\begin{split}
& x_1\geq\min(-1+x_2,-1+x_3),\ x_2\geq\min(-1+x_1,-1+x_3),\\
& x_1\geq -2+x_3,\ x_2\geq -2+x_3,\\
& x_1\geq \min(-2+x_2,x_3),
\end{split}
\end{equation} 
which can be interpreted as feasible schedules. Vectors $(-x_1,-x_2,-x_3)$ such that $(x_1,x_2,x_3)$ satisfies~\eqref{max-example} belong to the tropical polyhedron on the left of Figure~\ref{fig-max}, 
and the optimization problem can be solved by reconstructing the spectral function given 
on the right (see~\cite{GKS-11} for precise definition of spectral function in this case).  

We also note here that the method of spectral function carries over to the case when
entries of $A$ and $B$ in $A\otimes x\leq B\otimes x$ are general piecewise-affine functions
of $\lambda$ with integer slopes~\cite{Ser-lastdep}.

\begin{figure}
\begin{center}
\begin{tabular}{c@{{}\qquad{}}c}
\includegraphics[width=0.3\linewidth]{ex-max-poly}& 
\includegraphics[width=0.45\linewidth]{ex-max-sf23}
\end{tabular}
\caption{The tropical polyhedron of~\eqref{max-example}~\cite{GKS-11}, 
and the spectral function.}
\label{fig-max}
\end{center}
\end{figure}
}\fi
 
\section{Conclusions}
We have developed a new approach to the two-sided eigenproblem $A\otimes x=\lambda\otimes B\otimes x$
in max-plus linear algebra, based on parametric min-max functions. This
yields a reduction to mean-payoff games problems, for which a number
of algorithms have already been developed.
We introduced the concept of spectral function $s(\lambda)$, defined as the greatest eigenvalue of the associated parametric min-max function (or the greatest value of the associated mean-payoff game). We showed that $s(\lambda)$ has a natural geometric sense being equal to the inverse of
the least Chebyshev distance between $A\otimes x$ and $B\otimes x$. 
The spectrum of $(A,B)$ can be regarded as the zero-level set of the spectral function, which
is a $1$-Lipschitz function consisting of a finite number of affine pieces. These pieces can be reconstructed in pseudopolynomial time, hence the spectrum of $(A,B)$ can be also effectively
identified. 

A similar approach can be used in max-plus linear programming~\cite{GKS-11}. Spectral functions of a different type are used
in the decision procedure associated with the tropical Farkas lemma in~\cite{AGK-10}, allowing one to check whether a max-plus inequality
can be logically deduced from other max-plus inequalities.
The present approach can be generalized to the case when the entries of $A$ and $B$ are general piecewise-affine functions of $\lambda$~\cite{Ser-lastdep}, but the case of many parameters would be even more interesting. Such development could lead to practical applications in scheduling and design of asynchronous circuits. Also note that the parametric tropical systems are equivalent to parametric mean-payoff games, directing to 
useful stochastic and infinite-dimensional generalizations.  

\section{Acknowledgement} We thank Peter Butkovi\v{c} and Hans Schneider for many useful discussions which have been at the origin of this work.
We are also grateful to the referees for their careful reading and many useful remarks.


\end{document}